\theoremstyle{plain}
\newtheorem{theorem}{Theorem}
\newtheorem{proposition}{Proposition}[section]
\newtheorem{lemma}{Lemma}[section]
\theoremstyle{remark}
\newtheorem{remark}{Remark}[section]
\numberwithin{equation}{section}
\renewcommand{\complement}{{c}}
\newcommand{\E}{\mathds{E}}
\renewcommand{\P}{\mathds{P}}
\renewcommand{\Pr}{\mathbf{P}}
\newcommand{\R}{\mathbb{R}}
\newcommand{\T}{\mathbb{T}}
\newcommand{\C}{\mathbb{C}}
\newcommand{\Z}{\mathbb{Z}}
\newcommand{\N}{\mathbb{N}}
\newcommand{\dd}{{\rm d}}
\newcommand{\fstop}{\; \text{.}}
\newcommand{\comma}{\; \text{,}\;\;}
\newcommand{\semicolon}{\; \text{;}\;\;}
\newcommand{\restr}[1]{\rvert_{#1}}
\newcommand{\tonde}[1]{\left(#1\right)}
\newcommand{\quadre}[1]{\left[#1\right]}
\newcommand{\ttonde}[1]{\big(#1\big)}
\newcommand{\abs}[1]{\left\lvert#1\right\rvert}
\newcommand{\emparg}{{\,\cdot\,}}
\newcommand{\eqdef}{\coloneqq}
\newcommand{\defeq}{\eqqcolon}
\newcommand{\car}{\mathds{1}}
\newcommand{\scalar}[2]{\left\langle #1\, \middle \vert\, #2 \right\rangle}
\newcommand{\norm}[1]{\left\lVert#1\right\rVert}
\newcommand{\tnorm}[1]{\big\lVert#1\big\rVert}
\newcommand{\ttnorm}[1]{\|#1\|}
\newcommand{\set}[1]{\left\{#1\right\}}							
\newcommand{\tset}[1]{\big\{#1\big\}}
\newcommand{\eps}{\varepsilon}
\newcommand{\purple}[1]{{\color{black}{#1}}}
\newcommand{\cA}{\ensuremath{\mathcal A}} 
\newcommand{\cC}{\ensuremath{\mathcal C}} 
\newcommand{\cD}{\ensuremath{\mathcal D}} 
\newcommand{\cF}{\ensuremath{\mathcal F}}
\newcommand{\cM}{\ensuremath{\mathcal M}} 
\newcommand{\cN}{\ensuremath{\mathcal N}}
\newcommand{\cQ}{\ensuremath{\mathcal Q}} 
\newcommand{\cR}{\ensuremath{\mathcal R}} 
\newcommand{\cS}{\ensuremath{\mathcal S}} 
\newcommand{\cT}{\ensuremath{\mathcal T}} 
\newcommand{\cU}{\ensuremath{\mathcal U}}
\newcommand{\cY}{\ensuremath{\mathcal Y}}
\newcommand{\bbY}{{\ensuremath{\mathbb Y}}}
\begin{document}

\begin{frontmatter}
\title{Tiny fluctuations of the averaging process\\ around its degenerate steady state}
\runtitle{Tiny fluctuations of the averaging process}

\begin{aug}
\author[A]{\fnms{Federico}~\snm{Sau}\ead[label=e1]{federico.sau@units.it}},
\address[A]{
University of Trieste\printead[presep={,\ }]{e1}}

\end{aug}

\begin{abstract}
We analyze nonequilibrium  fluctuations of the averaging process on $\T_\eps^d$, a continuous degenerate Gibbs sampler running over the edges of the discrete $d$-dimensional torus.  We show that, if we start from a smooth deterministic non-flat interface, recenter,  blow-up  by a non-standard CLT-scaling factor $\theta_\eps=\eps^{-(d/2+1)}$, and rescale diffusively,   Gaussian fluctuations emerge in the limit  $\eps\to 0$. These fluctuations are purely dynamical, zero at times $t=0$ and $t=\infty$, and non-trivial for $t\in (0,\infty)$. We fully determine the correlation matrix of the limiting  noise, non-diagonal as soon as $d\ge 2$.
The main technical challenge in this stochastic homogenization procedure lies in  a LLN for a  weighted space-time average of  squared discrete gradients. We accomplish this through a Poincaré inequality with respect to the underlying randomness of the edge updates, a tool from Malliavin calculus in Poisson space. This inequality, combined with sharp gradients' second moment estimates, yields quantitative variance bounds without prior knowledge of the limiting mean. 
Our method avoids higher (e.g., fourth) moment bounds, which seem inaccessible with the present techniques. 
\end{abstract}

\begin{keyword}[class=MSC]
\kwd[Primary ]{60K35}
\kwd{35B27}
\kwd[; secondary ]{60J27}
\kwd{60H15}
\kwd{60H07}
\end{keyword}

\begin{keyword}
\kwd{Interacting particle systems}
\kwd{scaling limits}
\kwd{stochastic homogenization}
\kwd{hydrodynamic limit}
\kwd{nonequilibrium fluctuations}
\kwd{averaging process}
\kwd{Malliavin calculus}
\end{keyword}

\end{frontmatter}
\setcounter{tocdepth}{1}
\tableofcontents

\section{Introduction, model and main results}
The averaging process on a graph is a continuous-space Markov chain, which is commonly interpreted  as  an opinion dynamics,  a distributed algorithm, or an interface moving through a randomized sequence of deterministic local updates (see, e.g., \cite{boyd_et_al_randomized_2006,aldous_lecture_2012,aldous2013interacting,movassagh_repeated2022} and references therein). Its  dynamics goes as follows. Attach i.i.d.\ Poisson clocks to edges, and assign real values to vertices; at the arrival times of these  clocks, update the values with their average. As time runs, the averaging process converges to a flat configuration, and one major problem in the field is that of quantifying the speed of convergence to its degenerate equilibrium in terms of characteristic features of the underlying graph  \cite{chatterjee2020phase,quattropani2021mixing,caputo_quattropani_sau_cutoff_2023}.  	

In this paper, we examine scaling limits of the averaging process on the discrete $d$-dimensional torus $\T_\eps^d$,  $d\ge 1$, subjected to a diffusive space-time rescaling.  In this setting, the averaging process, say $u_t^\eps \in \R^{\T_\eps^d}$, evolves by replacing, at rate $\eps^{-2}$ for each nearest neighbor pair $x,y \in \T_\eps^d$,  the values $(u_{t^-}^\eps(x), u_{t^-}^\eps(y))$ with
\begin{equation} (u_t^\eps(x),u_t^\eps(y))\eqdef (\tfrac12\tonde{u_{t^-}^\eps(x)+u_{t^-}^\eps(y)},\tfrac12\tonde{u_{t^-}^\eps(x)+u_{t^-}^\eps(y)})\fstop
\end{equation}
Alternatively, in the language of stochastic homogenization,  $u_t^\eps$
is the  solution to the discrete parabolic problem  
$\partial_t u_t^\eps =\nabla_*^\eps\cdot \tonde{a_\eps(t,\emparg)\,\nabla^\eps u_{t^-}^\eps}$ on $\T_\eps^d$,  in which the coefficient field $a_\eps=(a_\eps(t,x))_{t\ge 0,\, x\in \T_\eps^d}$ is random, time-dependent, and formally given by
\begin{equation}\label{eq:a-eps-coefficient-field}
	a_\eps(t,x)= \frac{\eps^2}2\begin{pmatrix}
		\dd N_t^{\eps,1}(x)&&\\
		&\ddots &\\
		&& \dd N_t^{\eps,d}(x)
	\end{pmatrix}\comma\qquad t \ge 0\comma x \in \T_\eps^d\fstop
\end{equation}
Here, $\dd N_t^{\eps,i}(x)$ stands for the increments of the Poisson process of intensity $\eps^{-2}$ attached to the pair  $x$, $x+\eps e_i\in \T_\eps^d$. This homogenization problem is then degenerate, in the sense that the coefficient field $a_\eps$ is a linear combination of Dirac deltas and,  thus, does not satisfy any ellipticity conditions: there is no  $C>0$ satisfying neither $a_\eps\ge C$, nor $a_\eps \le C$ (in the matrix sense).

The first ergodic theorems in stochastic homogenization for linear elliptic and parabolic problems date back to the seminal works of Kozlov \cite{kozlov_averaging_1979} and Papanicolau and Varadhan \cite{papanicolau_varadhan_boundary_1981}. Since then,  there has been an intense  activity in considerably extending these qualitative results. In fact, even quantitative features of the solutions, such as regularity   and fluctuations limit theorems, are by now well understood in the uniform ellipticity context (see, e.g., \cite{armstrong_kuusi_mourrat_quantitative_2019,josien_otto_annealed_2022}, \cite{gu_mourrat_scaling_2016,duerinckx_gloria_otto_structure_2020}, and references therein), while the only degenerate examples are limited to supercritical percolation clusters \cite{armstrong_dario_elliptic_2018,dario_optimal_2021}, rigid inclusions \cite{duerinckx_gloria_quantitative_2022} and log-normal coefficient fields \cite{gloria_qi_quantitative_2024,clozeau_gloria_qi_quantitative_2024}.

In this article, we move one step forward by investigating dynamical fluctuations of the random $u_t^\eps$ in this time-dependent degenerate setting (cf.\ \eqref{eq:a-eps-coefficient-field}). This result is the content of Theorem \ref{th:flu}, which could be regarded as the functional central limit theorem (FCLT) after the (quantitative) law of large numbers (LLNs) established in \cite{sau_concentration_2023}. As recently worked out in some examples of  random walks in time-dependent random environment (see, e.g., \cite{rhodes_homogenization_2008,biskup_rodriguez_limit_2018}, all concerned with LLNs of random walks' probabilities),  our FCLT  also draws upon the fundamental idea that adding a mixing dynamics  potentially tempers spatial degeneracy. In our case, this dynamics consists in  having zero conductances which suddenly take the value $+\infty$ at the occurrence of a Poisson mark. As in most degenerate instances, this mixing mechanisms plays a crucial role in recovering on large scales regularity estimates which would deterministically hold true in the uniform ellipticity setting.

Our approach is probabilistic, based on martingales,   as developed by Holley and 	Stroock \cite{holley_generalized_1978} and successfully exploited within the context of interacting particle systems	 (see, e.g., the monographs \cite{de_masi_mathematical_1991,kipnis_scaling_1999}). Nonetheless, in contrast with most systems studied in that realm,  the averaging process $u_t^\eps$ has no  truly random ergodic states (they all consist of deterministic flat configurations). This lack of microscopic fluctuations at equilibrium prevents us to employ methods based on relative entropy, not having a clear notion of local equilibrium to compare the law of $u_t^\eps$ with (see, e.g.,  \cite{jara2018nonequilibrium,jara_landim_stochastic_2023} and references therein). 

We overcome this obstacle by combining the aforementioned martingale-based approach with two main ingredients,  typical of infinite-dimensional stochastic analysis. On the one hand, we control a key quantity ---  squared discrete gradients of $u_t^\eps$ --- by expressing its expected value as an infinite series of iterated integrals. On the other hand, inspired by a series of recent works in stochastic homogenization, e.g., \cite{duerinckx_otto_higher_2020,duerinckx_gloria_multiscale_concentration_2020,duerinckx_gloria_multiscale_constructive_2020}, we leverage the Poisson nature of the updates by operating with tools from Malliavin calculus (in Poisson space).  These are the two building blocks in the proof of our second main result, Theorem \ref{th:LLN}, a LLN for weighted space-time averages of squared discrete gradients.
Theorem \ref{th:LLN} is also an 	 essential step in the characterization of the limiting Gaussian process from Theorem \ref{th:flu}, and reveals  the smallness of the fluctuations $u_t^\eps-\E^\eps[u_t^\eps]$, as well as the non-diagonal correlation structure of the limiting driving noise (despite the form of the coefficient field $a_\eps$ in \eqref{eq:a-eps-coefficient-field} with i.i.d.\ diagonal entries).

In conclusion, as originally proposed in \cite{aldous_lecture_2012,aldous2013interacting}, the averaging process (together with a long list of companions \textquotedblleft random averages\textquotedblright\ models, e.g., \cite{ferrari_fontes_fluctuations_1998,balazs_rassoul-agha_seppalainen_random_2006,como_scaling2011,lanchier_critical_2012,haggstrom2014further,banerjee2020rates,fontes_machado_zuaznabar_scaling_2023}, with either	 site or edge updates) is a natural nonequilibrium  system of stochastic moving interfaces, whose quantitative features are still largely unexplored. These include, for instance,  a regularity theory and worst-case mixing times   on geometric settings other than the torus. The latter problem becomes particularly  interesting in light of the works \cite{chatterjee2020phase,caputo_quattropani_sau_cutoff_2023}, which show that on the simplest examples of expander graphs the averaging process' degenerate local update rule  dramatically affects the timescales relevant to relaxation, if compared to the homogenized dynamics. 
We believe the combination of Markov chains'  and stochastic homogenization techniques that  we develop here to  be fruitful also  in this context.

\smallskip 

We now present the model and our main results. Before that, let us introduce some general notation.
All throughout the article,  $\T^d\eqdef (\R/\Z)^d$, with $d\ge 1$, is the $d$-dimensional torus,  while $\T_\eps^d$ denotes its lattice discretization with mesh size $\eps \in (0,1)$. For simplicity, we  always assume  $\eps^{-1}\in \N$. Moreover, with a slight abuse of notation, we often identify the set $\T_\eps^d$ with the  undirected graph obtained by connecting vertices of the form $x$ and $y=x\pm \eps e_i\in \T_\eps^d$ with an edge, where $e_1,\ldots, e_d\in \R^d$ denotes the canonical basis of $\R^d$. Hence, letting $\abs{\emparg}$ denote the usual Euclidean distance on $\R^d$, we will refer to sites $x, y\in \T_\eps^d$ satisfying $\abs{x-y}=\eps$ as nearest neighbor vertices. Finally, we shall implicitly imply  \textquotedblleft holds true for all $\eps \in (0,1)$ with $\eps^{-1}\in \N$\textquotedblright\ whenever $\eps$ appears in a statement without any other indications.
\subsection{Averaging process} The \textit{averaging process} on $\T_\eps^d$ is the continuous-time Markov process $(u_t^\eps)_{t\ge 0}$ which evolves on $\R^{\T_\eps^d}$ by updating, independently and at rate $\eps^{-2}$, the values at nearest neighbor vertices with their average value. A possible rigorous description of the model goes through the introduction of a family of i.i.d.\ Poisson processes \begin{equation}\label{eq:poisson-process}
	N^\eps=(N_t^{\eps,i}(x))_{t \ge 0,\,x\in \T_\eps^d,\, i=1,\ldots, d}
\end{equation} of intensity $\eps^{-2}$, with the arrivals of $N_\emparg^{\eps,i}(x)$ corresponding to an update among the two nearest neighbor vertices $x$ and $x+\eps e_i\in \T_\eps^d$. We write $\P^\eps$ and $\E^\eps$ for the associated probability law and expectation, respectively. 	Hence, for every starting configuration $u_0^\eps\in \R^{\T_\eps^d}$, $(u_t^\eps)_{t\ge 0}$ is defined as the unique \textit{c\`{a}dl\`{a}g} solution to the following finite system of SDEs driven by Poisson processes: for every $x\in \T_\eps^d$ and $t>0$, 
\begin{align}
	\dd u_t^\eps(x)&= \sum_{i=1}^d\dd N_t^{\eps,i}(x) \quadre{\tfrac12\tonde{u_{t^-}^\eps(x)+u_{t^-}^\eps(x+\eps e_i)}-u_{t^-}^\eps(x)}\\
	&\quad + \sum_{i=1}^d \dd N_t^{\eps,i}(x-\eps e_i)\quadre{\tfrac12\tonde{u_{t^-}^\eps(x)+u_{t^-}^\eps(x-\eps e_i)}-u_{t^-}^\eps(x)}\\
	&= \sum_{i=1}^d \dd N_t^{\eps,i}(x)\quadre{\tfrac12 \tonde{u_{t^-}^\eps(x+\eps e_i)-u_{t^-}^\eps(x)}}\\
	&\quad + \sum_{i=1}^d \dd N_t^{\eps,i}(x-\eps e_i)\quadre{\tfrac12 \tonde{u_{t^-}^\eps(x-\eps e_i)-u_{t^-}^\eps(x)}}\comma	
	\label{eq:SDE-poisson}
\end{align} 
with $\dd N_t^{\eps,i}(x)=N_t^{\eps,i}(x)-N_{t^-}^{\eps,i}(x)$ denoting the increments of the Poisson process. Further, note that the spacing $\eps$ in $\T_\eps^d$ and the intensity $\eps^{-2}$ of the Poisson processes  correspond to a diffusive space-time rescaling in  \eqref{eq:SDE-poisson}.

Since $\T_\eps^d$ is a finite graph, it readily follows that, as a strong solution to the system above, $(u_t^\eps)_{t\ge 0}$ exists unique for all initial conditions $u_0^\eps\in \R^{\T_\eps^d}$,  and is a Markov process.  Moreover, due to the connectedness of $\T_\eps^d$,  $u_t^\eps$ approaches, as $t\to \infty$ and for all $u_0^\eps\in \R^{\T_\eps^d}$, the flat profile with value $\langle u_0^\eps\rangle_\eps\in \R$, the spatial average of $\T_\eps^d$,
\begin{equation}
	u_t^\eps(x) \xrightarrow{t\to \infty} \langle u_0^\eps\rangle_\eps\eqdef \eps^d \sum_{y\in \T_\eps^d} u_0^\eps(y)\comma
\end{equation}
$\P^\eps$-a.s.\ and uniformly over $x\in \T_\eps^d$. In particular, we observe that, despite $u_t^\eps$ is, generically, a  random element of $\R^{\T_\eps^d}$ for all times $t> 0$, the corresponding unique steady state is deterministic and identically equal to $\langle u_0^\eps\rangle_\eps$. Hence, the averaging dynamics is trivial at equilibrium,  registering, in particular, no fluctuations.	

In this work, we analyze asymptotics as $\eps \to 0$ of {out-of-equilibrium fluctuations} for this model,  which, due to the degeneracy of the steady state, lacks a non-trivial notion of \emph{local equilibrium} --- a fundamental feature in the theory of hydrodynamic limits for interacting particle systems (see, e.g., \cite{kipnis_scaling_1999}). In words, we shall find a scaling factor $\theta_\eps\to \infty$ as $\eps\to0$ and a non-trivial process $(\cY_t)_{t\ge 0}$ describing well, in the diffusive regime as $\eps\to0$, the nonequilibrium fluctuations encoded in the centered fields 
\begin{equation}\label{eq:fluctuation-fields}
	\cY_t^\eps \eqdef \theta_\eps\set{ \eps^d\sum_{x\in \T_\eps^d}\tonde{u_t^\eps(x)-\E^\eps[u_t^\eps(x)]}\delta_x}\comma
\end{equation}  after taking spatial averages against suitable test functions.	Note that we consider a deterministic initial configuration $u_0^\eps\in \R^{\T_\eps^d}$, so to observe only {dynamical fluctuations}.

Maybe surprisingly at first, it turns out that,  instead of the most standard CLT-scaling $\eps^{-d/2} $, 	the averaging process' nonequilibrium fluctuations require an extra factor $\eps^{-1}$, thus, \begin{equation}\label{eq:theta-eps}
	\theta_\eps=\eps^{-(d/2+1)}\fstop
\end{equation}	  In this sense, fluctuations are unusually small in our context. 	Next to this, the limiting fluctuations $\cY_t$ become {Gaussian}, have rather explicit space-time correlations, and satisfy $\cY_0=\lim_{t\to \infty}\cY_t=0$. 

\subsection{Tiny nonequilibrium fluctuations}
In order to best describe the limit $\cY_t$, let us further manipulate the equations \eqref{eq:SDE-poisson} defining $u_t^\eps$.	
By   adopting the standard  notation, for all $g\in \R^{\T_\eps^d}$,  $x\in \T_\eps^d$ and $i=1,\ldots, d$,
\begin{align}\label{eq:grad}
	\begin{aligned}
		\nabla^{\eps,i} g(x)&\eqdef \eps^{-1}\tonde{g(x+\eps e_i)-g(x)}\comma &&\nabla^\eps g(x)\eqdef (\nabla^{\eps,i}g(x))_{i=1,\ldots, d}\comma
		\\ \nabla_*^{\eps,i}g(x)&\eqdef \eps^{-1}\tonde{g(x)-g(x-\eps e_i)}\comma &&\nabla_*^\eps g(x)\eqdef (\nabla_*^{\eps,i}g(x))_{i=1,\ldots, d}\comma
	\end{aligned}
\end{align}
\begin{equation}\label{eq:laplacian}
	\Delta_\eps g(x)\eqdef \sum_{i=1}^d \nabla_*^{\eps,i}\nabla^{\eps,i}g(x)
	=\sum_{i=1}^d \eps^{-2}\tonde{g(x+\eps e_i)+g(x-\eps e_i)-2 g(x)}\comma
\end{equation}
and letting ${\rm diag}(a)\in \R^{d\times d}$  denote the diagonal matrix having  $a=(a^i)_{i=1,\ldots, d}\in \R^d$ as diagonal elements, 
the system \eqref{eq:SDE-poisson} reads as
\begin{align}
	\dd u_t^\eps(x)&= \frac{\eps}{2}	 \sum_{i=1}^d\tset{ \dd  N_t^{\eps,i}(x){ \nabla^{\eps,i}u_{t^-}^\eps(x)}-\dd  N_t^{\eps,i}(x-\eps e_i){ \nabla^{\eps,i}u_{t^-}^\eps(x-\eps e_i)}}\\
	&= \frac{\eps^2}2\sum_{i=1}^d \nabla_*^{\eps,i}\big(\dd  N_t^{\eps,i}\, \nabla^{\eps,i}u_{t^-}^\eps\big)(x)\\
	&= \frac{\eps^2}2\, \nabla_*^\eps \cdot \tonde{{\rm diag}(\dd N_t^{\eps,\emparg})\nabla^\eps u_{t^-}^\eps}(x)\comma
\end{align}
or, equivalently, by passing to the compensated Poisson processes \begin{equation}\label{eq:poisson-process-compensated}
	\bar N_t^{\eps,i}(x)\eqdef N_t^{\eps,i}(x)-\eps^{-2}t\comma\qquad t\ge 0\comma x \in \T_\eps^d\comma i=1,\ldots, d\comma
\end{equation} as
\begin{equation}\label{eq:SDE-poisson-compensated}
	\dd u_t^\eps(x)=\tfrac12 \Delta_\eps u_t^\eps(x)\, \dd t+ \tfrac{\eps^2}2 \nabla_*^\eps\cdot \tonde{{\rm diag}(\dd \bar N_t^{\eps,\emparg})\nabla u_{t^-}^\eps}(x)\fstop
\end{equation}
Hence, next to a smoothening Laplacian term $\frac12\Delta_\eps u_t^\eps$, the stochastic dynamics is driven by an i.i.d.\ diagonal noise which multiplies the discrete gradients $\nabla^\eps u_{t^-}^\eps$ of the random averaging process.  The gradient $\nabla_*^\eps$ accounts for the conservative nature of the noise.

Our main task is to establish, after centering and scaling with $\theta_\eps$,  a homogenization principle for this noise term, in which two main effects equally contribute: on the one side, the random gradients $\nabla^\eps u_{t^-}^\eps$ get replaced by their deterministic counterparts; on the other side, the stochasticity lost in the previous step is superseded by the appearance of a non-diagonal structure of the limiting noise. More precisely, we show that the limiting  fluctuations $\cY_t$ solve (in a weak sense) the following {\rm SPDE} on $\T^d$
\begin{equation}\label{eq:SPDE}
	\dd \cY_t = \tfrac12 \Delta \cY_t - \tfrac12\nabla\cdot \tonde{\xi \,\nabla u_t}\comma
\end{equation} 
where:
\begin{itemize}
	\item $\nabla u_t$ is the gradient of the deterministic solution $u_t$ to the heat equation  on $\T^d$
	\begin{equation}\label{eq:heat-equation}
		\partial_t u= \tfrac12 \Delta u\semicolon
	\end{equation}
	\item  $\xi=(\xi^{i,j})_{i,j=1,\ldots, d}$ is a matrix-valued space-time white noise whose covariance is informally given, for some $\mathfrak a=\mathfrak a(d)\in (0,1]$, by
	\begin{equation}\label{eq:noise-structure}
			\E\big[\xi_t^{i,j}(x)\xi_s^{k,\ell}(y)\big]= \delta_0(t-s)\delta_0(x-y)\, \car_{i=k}\car_{j=\ell}\tonde{\tonde{1+\mathfrak a}\car_{i=j}+\frac{1-\mathfrak a}{d-1}\,\car_{i\neq j}}\comma
	\end{equation}
	for all  $x,y\in \T^d$, $t,s\ge 0$, and $i,j,k,\ell=1,\ldots, d$;
	\item a weak solution to \eqref{eq:SPDE} is meant to be  a solution to the corresponding martingale problem (cf.\ \eqref{eq:mart-problem}, \eqref{eq:mart-2}).
\end{itemize}

The precise statement of this convergence result is the content of the following theorem, before which we need to introduce some notation.
For all $k\in \N_0\cup\set{\infty}$, let $\cC^k(\T^d)$ be the space of $k$-continuous differentiable functions on $\T^d$. We  write $\cC(\T^d)=\cC^0(\T^d)$.
For all $\alpha\in \R$, $H^\alpha(\T^d)$ denotes the $\alpha^{th}$-order fractional Sobolev space on $\T^d$, defined as the closure of $\cC^\infty(\T^d)$ with respect to the following Hilbertian (semi)norm 
\begin{equation}\label{eq:H-norm}
	\norm{f}_{H^\alpha(\T^d)}\eqdef \bigg(\sum_{m\in \Z^d}\tonde{1+|m|^2}^\alpha  |\scalar{f}{\phi_m}_{L^2(\T^d)}|^2\bigg)^\frac12\comma\qquad f \in \cC^\infty(\T^d)\fstop
\end{equation} Here, $\phi_m(x)\eqdef e^{2\pi im\cdot x}$, for all $x\in \T^d$ and $m\in \Z^d$. It is well-known that $H^\alpha(\T^d)$ is a Hilbert space, and that,  for $\alpha = k\in \N_0$, $H^\alpha(\T^d)$ coincides with the usual Sobolev space $W^{2,k}(\T^d)$. Moreover, $H^\alpha(\T^d)\hookrightarrow H^\beta(\T^d)$ for all $\alpha>\beta$, and    $\cY_t^\eps\in H^{-\alpha}(\T^d)$ for all $\alpha > d/2$. For more details and properties, we refer to, e.g., \cite[Section C]{jara2018nonequilibrium}.	Finally, we write $\cD([0,\infty);H^{-\alpha}(\T^d))$ (resp.\ $\cC([0,\infty);H^{-\alpha}(\T^d))$) for the space of \textit{c\`{a}dl\`{a}g} (resp.\ continuous) trajectories taking values in  $H^{-\alpha}(\T^d)$ (see,  e.g., \cite[\S2--3]{billingsley_convergence_1999}). 
\begin{theorem}[Nonequilibrium fluctuations]\label{th:flu}
	Fix $\alpha>3+d/2$ and $u_0\in \cC^2(\T^d)$.	Then, there exists $\mathfrak a=\mathfrak a(d)\in (0,1]$ such that, when initializing the averaging process $(u_t^\eps)_{t\ge 0}$ with $u_0^\eps = u_0\restr{\T_\eps^d}$, the following convergence in law 	
	\begin{equation}
		(\cY_t^\eps)_{t\ge 0} \xRightarrow{\eps\to 0} (\cY_t)_{t\ge 0}\comma\quad \text{in}\ \cD([0,\infty);H^{-\alpha}(\T^d))\comma
	\end{equation}
	holds true for the corresponding fluctuation fields $(\cY_t^\eps)_{t\ge 0}$ given in \eqref{eq:fluctuation-fields} with $\theta_\eps= \eps^{-(d/2+1)}$, where $(\cY_t)_{t\ge 0}$ is the unique process in $\cC([0,\infty);H^{-\alpha}(\T^d))$ solving the {\rm SPDE} \eqref{eq:SPDE}--\eqref{eq:noise-structure}, with:
	\begin{itemize}
		\item initial condition $\cY_0=0$;
		\item $(u_t)_{t\ge 0}$ therein being the solution to \eqref{eq:heat-equation} with initial condition $u_0\in \cC^2(\T^d)$.
	\end{itemize}  
\end{theorem}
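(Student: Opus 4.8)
The plan is to follow the classical Holley--Stroock martingale scheme, reducing the FCLT to three ingredients: tightness of the family $(\cY^\eps_\emparg)_\eps$ in $\cD([0,\infty);H^{-\alpha}(\T^d))$, identification of every subsequential limit as a solution to the martingale problem associated with \eqref{eq:SPDE}--\eqref{eq:noise-structure}, and uniqueness of that martingale problem. First I would write, for a fixed test function $\varphi\in\cC^\infty(\T^d)$, the Dynkin decomposition of $t\mapsto \cY^\eps_t(\varphi)$ coming from \eqref{eq:SDE-poisson-compensated}: a drift term $\int_0^t \tfrac12\cY^\eps_s(\Delta_\eps\varphi)\,\dd s$, plus a martingale $M^{\eps,\varphi}_t$ whose predictable quadratic variation is an explicit weighted space-time sum of squared discrete gradients $(\nabla^{\eps,i}u^\eps_{s^-})^2$ against $(\nabla^{\eps,i}\varphi)^2$ and suitable cross terms, carrying the prefactor $\theta_\eps^2\eps^{2}\eps^{d}\cdot\eps^{-2}=\eps^{d}\theta_\eps^2\eps^{-2}\cdot\eps^{2}$ — in any case the scaling is arranged precisely so that, after applying Theorem~\ref{th:LLN}, $\langle M^{\eps,\varphi}\rangle_t$ converges to the deterministic quantity $\tfrac14\int_0^t\!\!\int_{\T^d} \varphi$-weighted contractions of $\nabla u_s$ against the covariance tensor in \eqref{eq:noise-structure}. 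This is the step where the constant $\mathfrak a=\mathfrak a(d)$ and the non-diagonal structure of $\xi$ are produced: the second-moment gradient estimates from \cite{sau_concentration_2023} distinguish the ``diagonal'' correlations $\E[(\nabla^{\eps,i}u^\eps)^2(\nabla^{\eps,i}u^\eps)^2]$-type contributions from the ``off-diagonal'' ones, and $\mathfrak a$ is read off from their ratio in the limit.

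Next I would establish tightness. Using the Mitoma criterion, it suffices to show tightness of the real-valued processes $(\cY^\eps_\emparg(\varphi))_\eps$ for each $\varphi\in\cC^\infty(\T^d)$, which via the Aldous criterion reduces to (i) control of $\E^\eps[\,|\cY^\eps_t(\varphi)|^2\,]$ uniformly in $t$ and $\eps$, and (ii) an equicontinuity estimate on increments over small time intervals. Both follow from the Dynkin decomposition once we have uniform-in-$\eps$ bounds on $\E^\eps[\,(\nabla^{\eps,i}u^\eps_s)^2\,]$ integrated in space — which is exactly the content of the sharp second-moment estimates underlying Theorem~\ref{th:LLN} — together with the trivial $L^\infty$ contraction $\norm{u^\eps_t}_\infty\le\norm{u_0}_\infty$. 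A small point to handle is that $H^{-\alpha}(\T^d)$ with $\alpha>3+d/2$ is chosen generously enough that the Sobolev-type bounds on $\cY^\eps_t$ hold with room to spare; one also needs that jumps of $\cY^\eps_\emparg(\varphi)$ vanish uniformly as $\eps\to0$ (each jump is of size $O(\theta_\eps\eps^{d}\cdot\eps\cdot\eps)=O(\eps^{d/2})$), which upgrades the limit to $\cC([0,\infty);H^{-\alpha}(\T^d))$.

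For the identification step I would show that any subsequential limit $\cY$ satisfies, for every $\varphi$, that $\cY_t(\varphi)-\int_0^t\tfrac12\cY_s(\Delta\varphi)\,\dd s$ is a continuous martingale with the quadratic variation dictated by \eqref{eq:noise-structure} and, simultaneously, that $\cY_t(\varphi)^2-(\text{that variation})$ is a martingale as well; the passage to the limit in the drift term is routine since $\Delta_\eps\varphi\to\Delta\varphi$ uniformly and $\cY^\eps_\emparg\Rightarrow\cY_\emparg$, while the passage to the limit in the quadratic variation is precisely where Theorem~\ref{th:LLN} is invoked, replacing the random weighted gradient average by its deterministic homogenized value built from $\nabla u_s$. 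A technical care point: Theorem~\ref{th:LLN} gives convergence in probability of the relevant functionals, which must be combined with uniform integrability (again from the second-moment bounds) to conclude convergence of the expectations appearing in the martingale characterization, and one must check that the limiting martingale problem is the one whose driving Gaussian noise has covariance \eqref{eq:noise-structure} — i.e.\ that a Gaussian field with the prescribed space-time white-noise covariance indeed realizes the right bracket.

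Finally, uniqueness of the martingale problem \eqref{eq:mart-problem}--\eqref{eq:mart-2}: since \eqref{eq:SPDE} is linear with additive (state-independent) noise, $\cY_t$ is an explicit Gaussian process — the stochastic convolution $\cY_t = -\tfrac12\int_0^t e^{(t-s)\Delta/2}\,\nabla\cdot(\xi_s\nabla u_s)\,\dd s$ — and uniqueness in law in $\cC([0,\infty);H^{-\alpha}(\T^d))$ follows by testing against the eigenbasis $\phi_m$ and solving the resulting closed (in fact decoupled, since $-\Delta$ is diagonalized by $\phi_m$) linear SDEs; one checks the mild solution takes values in $H^{-\alpha}$ for $\alpha>d/2$, with the extra derivatives of regularity of $u_0\in\cC^2$ absorbed so that $\alpha>3+d/2$ is comfortable. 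I expect the main obstacle to be the quadratic-variation computation and the extraction of $\mathfrak a(d)$: it is here that the degenerate, non-local-equilibrium nature of the model bites, forcing one to rely on the series-of-iterated-integrals representation and the Malliavin--Poincaré variance bounds behind Theorem~\ref{th:LLN} rather than on any short-range correlation or fourth-moment estimate, which the authors explicitly note are inaccessible. Everything else — tightness, drift passage, Gaussian uniqueness — is standard once those gradient estimates are in hand.
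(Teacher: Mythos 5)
Your proposal follows the paper's strategy in all essentials: the same Dynkin/martingale decomposition \eqref{eq:martingale-decomposition}--\eqref{eq:mart-2}, tightness of the distribution-valued fields, identification of the limiting quadratic variation through Theorem~\ref{th:LLN}, and uniqueness for the resulting linear Gaussian martingale problem. The only presentational differences are bookkeeping: you propose Mitoma's criterion plus Aldous' criterion for tightness, whereas the paper works directly with a Billingsley-type criterion in $H^{-\alpha}(\T^d)$ (Propositions~\ref{pr:tightness-general}--\ref{pr:tightness-field}), exploiting the Hilbert structure via the Fourier basis $\phi_m$; and you propose showing uniqueness by writing $\cY$ as an explicit stochastic convolution, whereas the paper packages this through the countably Hilbert nuclear triple and Holley--Stroock theory \cite{holley_generalized_1978}. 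Both choices are interchangeable with the paper's, and like the paper you correctly isolate the jump-size control ($O(\eps^{d/2})$, cf.\ Lemma~\ref{lemma:jump-sizes}) and second-moment bounds (Lemma~\ref{lemma:second-moment-fields}) as the required inputs.

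One passage, however, is muddled: you say that ``$\mathfrak a$ is read off from the ratio'' of ``diagonal'' versus ``off-diagonal'' contributions of the form $\E\big[(\nabla^{\eps,i}u^\eps)^2(\nabla^{\eps,i}u^\eps)^2\big]$, attributing this to the second-moment estimates of \cite{sau_concentration_2023}. This does not match the paper. First, the displayed quantity is a \emph{fourth} moment, and the paper explicitly avoids fourth-moment estimates (they are stated to be out of reach). Second, the estimates in \cite{sau_concentration_2023} provide only bounds, not exact asymptotics; the paper develops \emph{new} exact asymptotics for $\E[(\nabla^{\eps,i}u_t^\eps)^2]$ in Sections~\ref{sec:gradient-estimates} and~\ref{sec:mean}. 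The constant $\mathfrak a$ arises because $\E[(\nabla^{\eps,i}u_t^\eps)^2]$ homogenizes to a non-trivial \emph{linear combination} of the deterministic squared gradients $(\nabla^j u_t)^2$ over all directions $j$, and the coefficients are computed through the iterated-integral (transfer-matrix) representation with the quantities $\mathfrak b_\eps,\mathfrak c_\eps$ of Lemmas~\ref{lemma:Q2}--\ref{lemma:Q3} and Remark~\ref{remark:coefficient-a-b-c}; this is the content of Propositions~\ref{pr:mean-2}--\ref{pr:mean-1}. Your closing paragraph does correctly point to the series-of-iterated-integrals representation and the Malliavin--Poincar\'e variance bounds as the true source, so the confusion is self-corrected rather than fatal, but the middle passage as written describes a computation the paper does not, and could not, carry out.
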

\begin{remark}[Constant $\mathfrak a$]\label{remark:constant-a}
	Since our proofs do not require it, we did not try to extract the precise numerical value of the constant $\mathfrak a$ appearing in Theorem \ref{th:flu}. We just mention that
	\begin{equation}
		\mathfrak a=1\quad \text{for}\ d=1\comma\qquad \mathfrak a=\frac{\pi}{3\pi-4}\quad \text{for}\ d=2\comma
	\end{equation}	
	while we have some more abstract expressions for $d\ge 3$ guaranteeing $\mathfrak a\in (0,1)$. For more details, see Remark \ref{remark:formula-a}.
\end{remark}
\subsection{A law of large numbers for the squared gradients}\label{sec:sketch-proof+LLN} In order to prove Theorem \ref{th:flu}, we follow a probabilistic approach, taking advantage of the Markovian nature of the averaging process $u_t^\eps$. In formulas, recalling \eqref{eq:SDE-poisson-compensated} and  that $\cY_0^\eps=0$,  this corresponds to decompose the fields $\cY_t^\eps$ in \eqref{eq:fluctuation-fields}, when tested against a test function $f\in \cC^\infty(\T^d)$, as 
\begin{equation}\label{eq:martingale-decomposition}
	\cY_t^\eps(f)= \int_0^t \cY_s^\eps(\tfrac12 \Delta_\eps f)\, \dd s + \cM_t^\eps(f)\comma\qquad t \ge 0\comma
\end{equation}
where, for $\theta_\eps=\eps^{-(d/2+1)}$,
\begin{equation}\label{eq:martingale-eps}
	\begin{aligned}
		\cM_t^\eps(f)&\eqdef 	\int_0^t {\theta_\eps\,\eps^{d+2}}	\sum_{x\in \T_\eps^d}  \tonde{\tonde{{\rm diag}(\dd \bar N_s^{\eps,\emparg})\nabla u_{s^-}^\eps}(x)}\cdot \tonde{\tfrac12\nabla^\eps f(x)}\\
		&= \int_0^t \eps^{d/2+1} \sum_{i=1}^d\sum_{x\in \T_\eps^d} {\tonde{\dd \bar N_s^{\eps,i}(x)\,\nabla^{\eps,i}u_{s^-}^\eps(x)}}\tonde{\tfrac12 \nabla^{\eps,i}f(x)}\comma\qquad t \ge 0\comma
	\end{aligned}
\end{equation}
is a square-integrable martingale (with respect to the filtration generated by the process $u_t^\eps$). The martingale $\cM_t^\eps(f)$ has jumps and may be described through its predictable quadratic variation, referred to as $\langle \cM^\eps(f)\rangle_t$ and explicitly  given by
\begin{equation}\label{eq:predictable-q-v}
	\langle \cM^\eps(f)\rangle_t = \int_0^t  \eps^d\sum_{x\in \T_\eps^d}\sum_{i=1}^d \tonde{\nabla^{\eps,i}u_s^\eps(x)}^2 \tonde{\tfrac12 \nabla^{\eps,i}f(x)}^2\dd s\comma\qquad t \ge 0\fstop
\end{equation} 
This derivation uses elementary properties of the compensated Poisson processes in \eqref{eq:poisson-process-compensated}, such as $\E[\bar N_t^{\eps,i}(x)]=0$ and $\langle \bar N^{\eps,i}(x)\rangle_t = \eps^{-2}t$, as well as their independence. 

In view of the decomposition in \eqref{eq:martingale-decomposition}, the proof of Theorem \ref{th:flu} boils down to proving the following three claims:
\begin{enumerate}[(i)]
	\item the sequence $((\cY_t^\eps)_{t\ge 0})_\eps$ is tight in $\cD([0,\infty);H^{-\alpha}(\T^d))$, for $\alpha > 3+d/2$;
	\item all limiting processes $(\cY_t)_{t\ge 0}$ are continuous;
	\item all limiting processes $(\cY_t)_{t\ge 0}$ are solutions to the following martingale problem
	\begin{equation}\label{eq:mart-problem}
		\cY_t(f)=\int_0^t \cY_s(\tfrac12 \Delta f)\, \dd s  + \cM_t(f)\comma \qquad t\ge 0\comma f\in \cC^\infty(\T^d)\comma
	\end{equation}
	where $(\cM_t(f))_{t\ge 0}$ is a martingale (with respect to  the filtration generated by $(\cY_t)_{t\ge 0}$) whose predictable quadratic variation is deterministic and compatible with the noise structure described in \eqref{eq:SPDE}--\eqref{eq:noise-structure}.
\end{enumerate}
By the form of the limiting martingale and standard uniqueness results due to Holley and Stroock \cite{holley_generalized_1978}, (ii) and (iii) together would uniquely characterize the limit as the one described in Theorem \ref{th:flu}.

While the proofs of the steps (i) and (ii) above are not particularly problematic, establishing the  third step on the identification of the limiting martingale represents the major challenge of the proof of Theorem \ref{th:flu}. More specifically, thanks to classical martingale convergence theorems, our main task reduces to prove convergence in probability of the corresponding predictable quadratic variations. Reading out the explicit expression  of these quadratic variations from the right-hand side of \eqref{eq:predictable-q-v}, this means proving a weak law of large numbers for  weighted space-time averages of squares of discrete gradients of $u_t^\eps$. Most of our analysis is devoted to the proof of this result, which, for later reference, we now state in form of a theorem.

\begin{theorem}[LLN for squared discrete gradients]\label{th:LLN}
	Fix $u_0\in \cC^2(\T^d)$, and let $(u_t^\eps)_{t\ge 0}$ and $(u_t)_{t\ge 0}$ denote, respectively, the averaging process and the solution to the heat equation \eqref{eq:heat-equation},  started from $u_0^\eps =u_0\restr{\T_\eps^d}$ and $u_0\in \cC^2(\T^d)$. Then,  for all $g=(g^i)\in (\cC([0,\infty)\times \T^d))^d$ and $t\ge 0$, we have
	\begin{equation}
		\E^\eps\big[\tonde{\varGamma^\eps_t(g)-\varGamma_t(g)}^2\big]\xrightarrow{\eps\to 0} 0\comma
	\end{equation}
	where
	\begin{align}\label{eq:Gamma-eps}
		\varGamma_t^\eps(g)&\eqdef \sum_{i=1}^d\int_0^t \eps^d \sum_{x\in \T_\eps^d}\tonde{\nabla^{\eps,i}u_s^\eps(x)}^2 g_s^i(x)\, \dd s\comma\\
		\label{eq:Gamma}
		\varGamma_t(g)&\eqdef \sum_{i,j=1}^d \tonde{\tonde{1+\mathfrak a}\car_{i=j}+\frac{1-\mathfrak a}{d-1}\,\car_{i\neq j}} \int_0^t \int_{\T^d}  \tonde{\nabla^j u_s(x)}^2 g_s^i(x)\, \dd x \dd s\comma
	\end{align}
	and $\mathfrak a=\mathfrak a(d)\in (0,1]$ is the constant in Theorem \ref{th:flu} and Remark \ref{remark:constant-a}.
\end{theorem}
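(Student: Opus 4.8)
The plan is to prove Theorem \ref{th:LLN} by establishing two things separately: (a) convergence of the means, $\E^\eps[\varGamma_t^\eps(g)] \to \varGamma_t(g)$, and (b) vanishing of the variance, $\var^\eps(\varGamma_t^\eps(g)) \to 0$. For part (a), the key observation is that $\E^\eps[(\nabla^{\eps,i} u_s^\eps(x))^2]$ can be computed via a dual/coupling representation: running the averaging process from a deterministic profile, the second moment of gradients evolves according to a linear ``two-particle'' semigroup on pairs of sites (or pairs of edges). Concretely, one writes $\E^\eps[(\nabla^{\eps,i} u_s^\eps(x))^2]$ as an expectation over the $N^\eps$-randomness and expands it as an infinite series of iterated integrals (the chaos-type expansion alluded to in the introduction), or equivalently identifies it with a quantity governed by the evolution of a system of two coalescing-type walkers on the edge set. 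Homogenizing this two-body dynamics on the diffusive scale — using the known LLN/hydrodynamic behavior of $u^\eps$ toward the heat equation $u_t$ — produces the limit $\sum_j c_{ij} (\nabla^j u_s(x))^2$ with the mixing constants $c_{ij} = (1+\mathfrak a)\car_{i=j} + \frac{1-\mathfrak a}{d-1}\car_{i\neq j}$; the off-diagonal structure arises precisely because two gradient variables in different coordinate directions at nearby sites become correlated through the shared updates, and the constant $\mathfrak a$ is the limiting value of the relevant two-point correlation, computable via a resolvent/Green's function of the discrete Laplacian (giving the explicit $d=1,2$ values in Remark \ref{remark:constant-a}).

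For part (b), the variance bound, I would use the Poincaré inequality in Poisson space (the Malliavin-calculus tool the abstract emphasizes): for a functional $F = F(N^\eps)$ of the Poisson marks,
\begin{equation}
\var^\eps(F) \le \sum_{\text{edges } e}\sum_{k\ge 1} \E^\eps\big[(D_{e,k} F)^2\big]\comma
\end{equation}
where $D_{e,k}$ is the add-one-mark (difference) operator inserting an extra update at edge $e$ at a uniformly random time. Applying this with $F = \varGamma_t^\eps(g)$, one must control how inserting one additional edge-update perturbs the whole space-time integral of squared gradients. The effect of a single extra update at edge $e = \{x, x+\eps e_i\}$ at time $r$ is: instantaneously it replaces $u^\eps(x), u^\eps(x+\eps e_i)$ by their average (killing that one gradient), and thereafter the perturbation propagates through the linear dynamics. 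Since the averaging map is $1$-Lipschitz (a contraction) and the subsequent evolution is linear, the discrepancy between the perturbed and unperturbed trajectories is controlled, at time $s > r$, by the heat semigroup applied to a localized bump of size $O(\eps \cdot |\nabla^{\eps,i} u_{r^-}^\eps(x)|)$. The squared-gradient functional is quadratic, so $D_{e,k}\varGamma_t^\eps$ is bounded by a cross term of the form (perturbation gradient) $\times$ (original gradient), integrated over space-time. Summing over all edges $e$ and over the (geometrically decaying in $k$, via the expansion) contributions, and invoking the sharp second-moment estimates on $\nabla^\eps u^\eps$ from \cite{sau_concentration_2023} (which give $\E^\eps[(\nabla^{\eps,i}u_s^\eps(x))^2] = O(1)$ uniformly, with good decay properties), yields $\var^\eps(\varGamma_t^\eps(g)) = O(\eps^{?})\to 0$. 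The crucial point stressed in the abstract is that this route needs only \emph{second} moments of the gradients, not fourth moments, because the Poincaré inequality already ``squares'' things for us.

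Assembling the pieces: $\E^\eps[(\varGamma_t^\eps(g) - \varGamma_t(g))^2] = \var^\eps(\varGamma_t^\eps(g)) + (\E^\eps[\varGamma_t^\eps(g)] - \varGamma_t(g))^2$, and both terms vanish by (a) and (b). For (a) one still has to justify passing the limit inside the space-time sum, which follows from dominated convergence using the uniform second-moment bounds plus the pointwise convergence $\nabla^{\eps,i} u_s^\eps(x) \to$ (in $L^2$) and the convergence of the discrete two-body kernel to its continuum homogenized analogue; here smoothness $u_0 \in \cC^2$ ensures $\nabla^\eps u^\eps$ is genuinely $O(\eps^0)$ and the Riemann sums converge. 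The main obstacle, and where essentially all the work lies, is the quantitative analysis underlying both (a) and (b): precisely identifying the limiting two-point correlation constant $\mathfrak a$ through the chaos expansion / Green's-function computation, and then pushing the Poincaré-based variance estimate through with enough room to beat the scaling factor $\theta_\eps^2 \eps^{2(d+2)} = \eps^{d+2}$ built into $\varGamma_t^\eps$ relative to the natural size of fluctuations. In particular, the add-one-mark operator must be bounded using the stability of the linear flow \emph{uniformly in $\eps$} and in a way that is summable over the $\Theta(\eps^{-d})$ edges and over the iterated-integral levels — this interplay between the degeneracy (instantaneous $\infty$ conductances) and the smoothing of the averaged dynamics is the technical heart of the argument.
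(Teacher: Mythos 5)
Your high-level architecture matches the paper's: decompose $\E^\eps\big[(\varGamma^\eps_t(g)-\varGamma_t(g))^2\big]$ into variance plus squared bias, attack the variance via the Poincar\'e inequality in Poisson space with the add-one-mark operator $D_{(s,x,i)}$, and treat the bias via a series expansion of $\E\big[(\nabla^{\eps,i}u_t^\eps(x))^2\big]$ whose asymptotics are read off a Green's-function/Laplace-inversion computation (this is indeed how the constant $\mathfrak a$ arises, cf.\ Lemma~\ref{lemma:Q2} and Remark~\ref{remark:formula-a}). Your sketch of the mean part is right in spirit and lines up with Propositions~\ref{pr:second-moments}, \ref{pr:mean-2} and~\ref{pr:mean-1}.

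The genuine gap is in the variance step. You claim that a Cauchy--Schwarz bound on the cross term $\nabla^\eps w^\eps\cdot\nabla^\eps u^\eps$, combined with second-moment estimates, produces $O(\eps^{?})\to 0$. But the direct Cauchy--Schwarz route yields $\norm{w_s^\eps}_{L^2(\T_\eps^d)}\norm{u_s^\eps}_{L^2(\T_\eps^d)}^2 \approx \eps^{d+2}$ per edge-time which, once multiplied by the Poincar\'e prefactor $\eps^{-(d+2)}$ from $\gamma_\eps$ and integrated, gives $O(1)$ --- no margin at all; the paper flags exactly this borderline obstruction in Section~\ref{sec:proof-overview-lemmas}. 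Beating the scale requires three mechanisms absent from your sketch: (i) a splitting $D_{(s,x,i)}\varGamma_t^\eps \rightsquigarrow \varLambda_1+\varLambda_2+\varLambda_3$ as in \eqref{eq:varLambda-1-2-3} using a burn-in time $\delta$ with $\eps^2\ll\delta\ll\eps$; (ii) ultracontractivity \eqref{eq:ultracontractivity} to gain $\delta^{-d/2}\ll\eps^{-d}$ on the long-time piece $\varLambda_3$ (Lemma~\ref{lemma:varLambda-3}); (iii) an $L^1$-based localization argument (Lemmas~\ref{lemma:short-time} and~\ref{lemma:varLambda-2}) on the short-time piece $\varLambda_2$, exploiting that the initial discrepancy $w_s^\eps$ is a Dirac-like bump whose $L^1$-norm decays strictly better than its $L^2$-norm. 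Relatedly, your statement that the Poincar\'e inequality ``already squares things'' so fourth moments are avoided is not accurate: $D_{(s,x,i)}\varGamma_t^\eps$ is already quadratic in gradients, so $(D_{(s,x,i)}\varGamma_t^\eps)^2$ is quartic. The paper sidesteps fourth moments not through Poincar\'e per se, but by squaring the \emph{time} integral, applying Cauchy--Schwarz over time, and using the Aldous--Lanoue identity \eqref{eq:AL-identity} to convert one of the resulting time integrals of $H^1$-norms into an $L^2$-norm at the earlier time (see Lemmas~\ref{lemma:varLambda-1} and~\ref{lemma:varLambda-3}). Without these ingredients your variance bound does not close.
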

\subsection{Proof ideas and techniques}
The starting point in the proof of the LLN in Theorem \ref{th:LLN} is a variance estimate (Theorem \ref{th:var}). We derive this by tools from Malliavin calculus in Poisson space (see, e.g., \cite{last_penrose_poisson_2011,last_penrose_lectures_2018} and references therein). More specifically, we will resort to one specific tool in that context:  the Poincaré inequality. In words, this inequality allows to estimate the variance of a functional $F=F(N^\eps)$ of the underlying Poisson process in terms of (a norm of) its derivative $DF$ with respect to an infinitesimal variation of  $N^\eps$. The functional  $DF$ is referred to as the Malliavin derivative. 

As we will show, for the averaging process, $DF$ comes with a nice probabilistic dynamical interpretation in terms of an evolving discrepancy (Section \ref{sec:discrepancy}). We exploit this point of view to turn this Poincaré inequality into an effective bound. In order to achieve this, we derive some new second-moment estimates for the discrete gradients of the averaging process, and combine them with properties of the averaging process earlier developed in \cite{aldous_lecture_2012,quattropani2021mixing,sau_concentration_2023}.  Such improved estimates are key in the proof of Theorem \ref{th:LLN}, for controlling the variance of $\varGamma_t^\eps(g)$, and are sharp enough to determine the limit of its mean (Theorem \ref{th:mean}).

\subsubsection{Poincaré inequality in Poisson space}\label{sec:poincare-ineq-poisson-space}
In our setting, this Poincaré inequality may be rigorously stated as follows. Recall the  Poisson process $N^\eps$ in \eqref{eq:poisson-process} of the edge updates, 
which we can describe, more precisely, through its intensity measure $\gamma_\eps$ on $\bbY_\eps\eqdef [0,\infty)\times \T_\eps^d \times \set{1,\ldots, d}$ (endowed with the corresponding product Borel $\sigma$-field),  given by the  product measure 
\begin{equation}\gamma_\eps=\eps^{-2}\dd t \otimes \nu_{\T_\eps^d}\otimes \nu_{\set{1,\ldots, d}}\fstop
\end{equation} Here,  $\dd t$ stands for the Lebesgue measure on $[0,\infty)$, whereas, for a discrete set $S$, $\nu_S$ denotes the counting measure on $S$. Letting $\mathbf N(\bbY_\eps)$ be the space of integer-valued $\sigma$-finite measures on $\bbY_\eps$, we then interpret $N^\eps$ as a random element in the measurable space $\mathbf N(\bbY_\eps)$,  endowed with  $\cN^\eps$,  the smallest $\sigma$-field making the mappings $m\in \mathbf N(\bbY_\eps)\mapsto m(A)\in \R$ measurable, for all measurable sets $A\subset \bbY_\eps$. Now, $\P^\eps$ and $\E^\eps$ stand for the  law and expectation of $N^\eps$, respectively.

Then,  the  \textit{Poincaré inequality in Poisson space} reads, for every measurable function  $F:\mathbf N(\bbY_\eps)\to \R$, as
\begin{align}
	{\rm Var}_\eps(F)&\eqdef \E^\eps\big[\big(F(N^\eps)-\E^\eps[F(N^\eps)]\big)^2\big]\le \int_{\bbY_\eps} \E^\eps\big[ \tonde{D_{(t,x,i)}F(N^\eps)}^2\big]\, \dd\gamma_\eps(t,x,i)\\
	&\qquad = \eps^{-(d+2)}\int_0^\infty \eps^d \sum_{x\in \T_\eps^d}\sum_{i=1}^d \E^\eps\big[\tonde{D_{(t,x,i)}F(N^\eps)}^2\big]\, \dd t\fstop
	\label{eq:poincare-ineq-F}
\end{align}
In this formula, for all $(t,x,i)\in \bbY_\eps$, $D_{(t,x,i)}$ is the  difference operator defined, for all measurable $F:\mathbf N(\bbY_\eps)\to \R$, as
\begin{equation}
	D_{(t,x,i)}F(m)\eqdef F(m+\delta_{(t,x,i)})-F(m)\comma\qquad m \in \mathbf N(\bbY_\eps)\comma
\end{equation}
with $\delta_{(t,x,i)}$ denoting the Dirac delta at $(t,x,i)\in \bbY_\eps$. Roughly speaking, $D_{(t,x,i)}$ plays the role of derivative at the \textquotedblleft point\textquotedblright\ $m\in \mathbf N(\bbY_\eps)$ in the \textquotedblleft direction\textquotedblright\ $(t,x,i)\in \bbY_\eps$. In our context, if we keep in mind the interpretation of $N^\eps$ as identifying the edge updates for the averaging process,  $D_{(t,x,i)}F$ measures the effect on $F$ of the addition of an  update at time $t$ between the nearest neighbors $x$ and $x+\eps e_i\in \T_\eps^d$.

\subsubsection{Application of Poincaré inequality}
In proving Theorem \ref{th:LLN}, we will apply the Poincaré inequality \eqref{eq:poincare-ineq-F} to functionals $F$ of the form $\varGamma_t^\eps(g)$ given as  in \eqref{eq:Gamma-eps}, for some  $t> 0$, $g=(g^i)_{i=1,\ldots, d}\in (\cC([0,\infty)\times\T^d))^d$, and initial condition $u_0^\eps\in \R^{\T_\eps^d}$. This yields
\begin{align}\label{eq:poincare-ineq-Gamma-eps}
	\begin{aligned}
		{\rm Var}_\eps(\varGamma_t^\eps(g))&\le \eps^{-(d+2)}\int_0^\infty\eps^d\sum_{x\in \T_\eps^d}\sum_{i=1}^d \E^\eps\big[\tonde{D_{(s,x,i)}\varGamma_t^\eps(g)}^2\big]\, \dd s\\
		&=\eps^{-(d+2)}\int_0^t \eps^d\sum_{x\in \T_\eps^d}\sum_{i=1}^d \E^\eps\big[\tonde{D_{(s,x,i)}\varGamma_t^\eps(g)}^2\big]\,\dd s\comma
	\end{aligned}
\end{align} 
where the identity uses the fact that $\varGamma_t^\eps(g)$ does not depend on the history after time $ t$. Moreover, since $(\varGamma_t^\eps(g))_{t\ge 0}$ is an additive functional of $(u_t^\eps)_{t\ge 0}$, $D_{(s,x,i)}\varGamma_t^\eps(g)$ may be further simplified, for $s\le t$, as
\begin{equation}
	D_{(s,x,i)}\varGamma_t^\eps(g)= \sum_{j=1}^d\int_s^t \eps^d \sum_{y\in \T_\eps^d} \set{\tonde{\nabla^{\eps,j} E_{(s,x,i)}u_r^\eps(y)}^2-\tonde{\nabla^{\eps,j} u_r^\eps(y)}^2} g_r^j(y)\, \dd r\comma
\end{equation}
where, for $r\ge s$, $E_{(s,x,i)} u_r^\eps$ is the averaging process following the  edge updates $N^\eps$, conditioned to be equal, at time $r=s$, to
\begin{equation}
	y\in \T_\eps^d\longmapsto	E_{(s,x,i)}u_s^\eps(y)\eqdef \begin{dcases}
		\tfrac12\tonde{u_s^\eps(x)+u_s^\eps(x+\eps e_i)} &\text{if}\ y=x\ \text{or}\ y=x+\eps e_i\\
		u_s^\eps(y) &\text{else}\fstop
	\end{dcases}
\end{equation}
Analogously to  $D_{(s,x,i)}$ (which acts on functionals $F$), the operator $E_{(s,x,i)}$ acts on $(u_r^\eps)_{r\ge 0}$ by imposing an extra averaging-update at vertices $x$ and $x+\eps e_i\in \T_\eps^d$ at time $s\ge 0$. Furthermore, by linearity of the averaging process, $E_{(s,x,i)}u_r^\eps$ may be further decomposed as
\begin{equation}\label{eq:u+w}
	E_{(s,x,i)}u_r^\eps = u_r^\eps + w_r^{\eps,(s,x,i)}\comma\qquad r \ge s\comma
\end{equation}
namely, a sum of the original process $u_r^\eps \in \R^{\T_\eps^d}$ and a \textquotedblleft discrepancy\textquotedblright\ $w_r^{\eps,(s,x,i)}\in \R^{\T_\eps^d}$, both evolving as two averaging processes with two different initializations, sharing the same edge updates. In particular, they are not independent.	In view of this, we may recast  an estimate of the right-hand side of \eqref{eq:poincare-ineq-Gamma-eps}  into a bound on the evolution of such a discrepancy, and a central part of our analysis deals with establishing precise quantitative estimates on it and its discrete gradients (see Section \ref{sec:proof-overview-lemmas} for more details).

\subsubsection{Ultracontractivity  and gradient estimates for the averaging process}
Next to the Poincaré inequality in Poisson space, our analysis builds on two functional analytic  properties of the averaging flow on the discrete torus:  ultracontractivity and second-moment gradient estimates.

\smallskip \noindent
\textit{Ultracontractivity.}
By ultracontractivity we mean a bound of the following form: \purple{for  some $C=C(d)>0$},
\begin{equation}\label{eq:ultracontractivity}
	\E^\eps\big[\norm{u_t^\eps}_{L^2(\T_\eps^d)}^2\big]\le \langle u_0^\eps\rangle_\eps^2 + C\norm{u_0^\eps}_{L^1(\T_\eps^d)}^2\ttonde{\tonde{t\vee \eps^2}^{-d/2}\vee 1}\comma\qquad t\in (0,1]\fstop
\end{equation}
Here,  $\norm{\emparg}_{L^p(\T_\eps^d)}$, $p\in [1,\infty]$, denotes the $L^p$-norm on $\T_\eps^d$ with respect to the uniform measure $\eps^d$, i.e., for all	 $g\in \R^{\T_\eps^d}$, 
\begin{equation}\label{eq:Lp-space}
	\norm{g}_{L^p(\T_\eps^d)}\eqdef \bigg(\eps^d\sum_{x\in \T_\eps^d}\abs{g(x)}^p\bigg)^\frac1p\ \text{for}\ p\in [1,\infty)\comma\quad \norm{g}_{L^\infty(\T_\eps^d)}\eqdef \sup_{x\in \T_\eps^d}\abs{g(x)}\fstop
\end{equation}
We write $\scalar{\emparg}{\emparg}_{L^2(\T_\eps^d)}$ for the inner product in $L^2(\T_\eps^d)$.

Estimates of type \eqref{eq:ultracontractivity} for parabolic equations are classical, dating back to Nash's work (\cite{nash_continuity1958}, see also \cite{fabes_new1986}), 	and are well-known also in the graph context (see, e.g., \cite{diaconis1996nash}). For the averaging process,  they were first derived   only recently in \cite{quattropani2021mixing},    and further refined on $\T_\eps^d$, establishing the stronger inequality \cite[Theorem 2.1]{sau_concentration_2023}
\begin{equation}\label{eq:ultracontractivity-H1}
	\E^\eps\big[\norm{u_t^\eps}_{H^1(\T_\eps^d)}^2\big]\le C\norm{u_0^\eps}_{L^1(\T_\eps^d)}^2 \tonde{t\vee \eps^2}^{-(d/2+1)}\comma\qquad t\ge 0\fstop
\end{equation}
On the left-hand side above, $\norm{\emparg}_{H^1(\T_\eps^d)}$ reads, 
for all $g\in \R^{\T_\eps^d}$, as
\begin{equation}\label{eq:H1-space}
	\norm{g}_{H^1(\T_\eps^d)}\eqdef \bigg(\eps^d \sum_{x\in \T_\eps^d}\sum_{i=1}^d \abs{\nabla^{\eps,i }g(x)}^2\bigg)^\frac12\fstop
\end{equation}
In order to see why \eqref{eq:ultracontractivity-H1} is stronger than \eqref{eq:ultracontractivity}, it suffices to integrate over time the following   identity due to Aldous and Lanoue \cite[Eq.\ (2.7)]{aldous_lecture_2012}:
\begin{equation}\label{eq:AL-identity}
	\frac{\dd}{\dd t}\,\E\big[\norm{u_t^\eps}_{L^2(\T_\eps^d)}^2\big] = -\frac12\,\E\big[\norm{u_t^\eps}_{H^1(\T_\eps^d)}^2\big]\comma\qquad t\ge0\fstop
\end{equation}

Finally, we observe that the bounds in \eqref{eq:ultracontractivity} and \eqref{eq:ultracontractivity-H1} are effective for all $t=O(1)$, as well as for very small times $t\ll 1$. We emphasize that such estimates for $t\ll 1$ will play a key role in Lemma \ref{lemma:varLambda-3}, when proving  Theorem \ref{th:LLN}.

\smallskip \noindent
\textit{Gradient estimates.}
The second important and new ingredient is a thorough analysis of the dynamics of discrete gradients of the averaging process on the torus on a diffusive space-time scale. This will be carried out  in Sections \ref{sec:gradient-estimates} and \ref{sec:mean}. In a nutshell, this approach goes through: 
\begin{enumerate}[(a)]
	\item first, solving explicitly the SDEs governing the evolution of $(\nabla^{\eps,i}u_t^\eps)_{t\ge 0} \subset \R^{\T_\eps^d}$ (Lemma \ref{lemma:mild-solution-gradients});
	\item  then, reducing asymptotics of the second moments $\E\big[(\nabla^{\eps,i}u_t^\eps(x))^2\big]$ to the analysis of  infinitely-many iterated integrals  (Propositions \ref{pr:second-moments}, \ref{pr:mean-2} and \ref{pr:mean-1}).
\end{enumerate}   This latter step is what allows us to determine the non-diagonal form of the limiting correlations (cf.\ \eqref{eq:noise-structure}). Next to asymptotics, we  extract some new pseudo-contractivity and small-time estimates for gradients' second moments of independent interest (Lemmas \ref{lemma:H1-contractivity}--\ref{lemma:short-time}). 

It is worth to mention that similar  estimates were recently obtained in \cite[Proposition 2.2]{sau_concentration_2023}, aiming at capturing the correct order of magnitude of $\E\big[(\nabla^{\eps,i}u_t^\eps(x))^2\big]$ for a  wider time window, including the regime $t\gg 1$ as $\eps\to0$. However, while the approach in the aforementioned work only provides bounds, in the present article,  we are able to determine exact asymptotics by restricting to times $t=O(1)$. Finally, 
our analysis relies solely on second moments, not involving higher (e.g., fourth) moment estimates, whose control seems to be out of reach with the present techniques.

\subsection{Structure of the paper}
The rest of the paper is organized as follows. In Section \ref{sec:gradient-estimates}, we collect some technical results concerning discrete gradients of the averaging process which will be used all throughout. In Sections \ref{sec:variance} and \ref{sec:mean} we prove Theorem \ref{th:LLN} on the LLN for $\varGamma_t^\eps$ defined in \eqref{eq:Gamma-eps}. More specifically, in the former section, we prove that variances vanish as $\eps \to 0$, while in the latter one we establish the convergence of the means. Section \ref{sec:proof-th-1} is devoted to the proof of Theorem \ref{th:flu}. In \hyperref[sec:app]{Appendix}, we prove a few auxiliary results employed in Sections \ref{sec:variance} and \ref{sec:mean}.
\section{Discrete gradients}\label{sec:gradient-estimates}
In this section, we derive a number of technical results on the random discrete gradients of $u_t^\eps$ on $\T_\eps^d$, of use for the subsequent sections.	Before starting, let us introduce some notation.

After recalling that $\cN^\eps$ is the $\sigma$-field associated to $N^\eps$ (cf.\ Section \ref{sec:poincare-ineq-poisson-space}), we define $\cF^\eps\eqdef \cN^\eps\otimes \sigma(u_0^\eps)$ as the product  $\sigma$-field associated to the Poisson process and the averaging process' initial conditions. We write $(\cF_t^\eps)_{t\ge 0}$ for the corresponding filtration, with respect to which $(u_t^\eps)_{t\ge 0}\subset \R^{\T_\eps^d}$ is adapted.
In particular, $u_0^\eps\in \R^{\T_\eps^d}$ is $\cF_0^\eps$-measurable. 
Furthermore, recalling the definitions of $\nabla^{\eps,i}$, $\nabla_*^{\eps,i}$, $\nabla^\eps$, $\nabla_*^\eps$ and $\Delta_\eps$ from \eqref{eq:grad}--\eqref{eq:laplacian}, we introduce the random walk $(X_t^\eps)_{t\ge 0}$ on $\T_\eps^d$, with semigroup $(P_t^\eps)_{t\ge 0}$ and infinitesimal generator $\tfrac12 \Delta_\eps$. Moreover, we let $(p_t^\eps(x,y))_{t\ge 0,\,x, y\in \T_\eps^d}$ denote the corresponding transition probabilities. In what follows, we will repeatedly use that all discrete gradients commute, namely,
\begin{equation}\label{eq:commutation-gradients}
	\nabla^{\eps,i}\nabla^{\eps,j} = \nabla^{\eps,j}\nabla^{\eps,i}\comma \quad \nabla^{\eps,i}\nabla_*^{\eps,j}=\nabla_*^{\eps,j}\nabla^{\eps,i}\comma\quad \nabla_*^{\eps,i}\nabla_*^{\eps,j}=\nabla_*^{\eps,j}\nabla_*^{\eps,i}\comma
\end{equation}
for all $i,j=1,\ldots, d$, from which we readily obtain the following well-known intertwining relations for the semigroup and the gradients on $\T_\eps^d$: 
\begin{equation}\label{eq:intertwining}
	P_t^\eps \nabla^{\eps,i}=\nabla^{\eps,i}P_t^\eps\comma\quad P_t^\eps \nabla_*^{\eps,i}=\nabla_*^{\eps,i}P_t^\eps\comma\qquad t \ge 0\comma i=1,\ldots, d\fstop
\end{equation}
We remark that $\Delta_\eps$ and $P_t^\eps$ are symmetric operators on  $L^2(\T_\eps^d)$ (cf.\ \eqref{eq:Lp-space}) and that, by translation and rotational invariance of the random walk's dynamics on $\T_\eps^d$, we have $p_t^\eps(x,y)=p_t^\eps(y,x)=p_t^\eps(y-x,0)=p_t^\eps(x-y,0)$.	 
Finally, in order to lighten the notation,  we  write $\P=\P^\eps$ and $\E=\E^\eps$ all throughout.

\subsection{Stochastic dynamics  of discrete gradients}

As a first step, we write the random discrete gradients of $u_t^\eps$ in a mild-solution form. For this purpose, recall the definition of compensated Poisson processes $\bar N^\eps=(\bar N_t^{\eps,i}(x))_{t\ge 0,\, x\in \T_\eps^d,\, i=1,\ldots, d}$, as well as the equations \eqref{eq:SDE-poisson-compensated} solved by $u_t^\eps$. 
\begin{lemma}[Mild solution]\label{lemma:mild-solution-gradients} Fix $u_0^\eps\in \R^{\T_\eps^d}$. Then, 
	$\P$-a.s., we have
	\begin{equation}\label{eq:grad-u}
		\nabla^{\eps,i}u_t^\eps(x)= P_t^\eps\tonde{\nabla^{\eps,i}u_0^\eps}(x) + \sum_{j=1}^d \sum_{y\in \T_\eps^d}\int_0^t b_{t-s}^{\eps,i,j}(x,y)\, \nabla^{\eps,j}u_{s^-}^\eps(y)\, \dd \bar N_s^{\eps,j}(y)\comma
	\end{equation}
	for all  $t> 0$,  $x\in \T_\eps^d$ and $i=1,\ldots, d$, 
	where
	\begin{equation}\label{eq:bt-eps}
		b_t^{\eps,\purple{i,j}}(x,y)\eqdef \tfrac12\tonde{p_t^\eps(x+\eps e_i,y)+p_t^\eps(x-\eps e_j,y)-p_t^\eps(x+\eps e_i-\eps e_j,y)-p_t^\eps(x,y)}\fstop
	\end{equation}
\end{lemma}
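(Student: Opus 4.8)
The plan is to apply the Duhamel (variation-of-constants) principle to the discrete parabolic SDE \eqref{eq:SDE-poisson-compensated} and then commute the gradient $\nabla^{\eps,i}$ past all the operators involved, exploiting the intertwining relations \eqref{eq:intertwining}. First I would write $u_t^\eps$ itself in mild form: starting from $\dd u_t^\eps = \tfrac12\Delta_\eps u_t^\eps\,\dd t + \tfrac{\eps^2}{2}\nabla_*^\eps\cdot({\rm diag}(\dd\bar N_t^{\eps,\emparg})\nabla^\eps u_{t^-}^\eps)$, Duhamel's formula gives, $\P$-a.s., for all $t>0$ and $x\in \T_\eps^d$,
\begin{equation}
	u_t^\eps(x) = P_t^\eps u_0^\eps(x) + \frac{\eps^2}{2}\sum_{j=1}^d\sum_{y\in\T_\eps^d}\int_0^t P_{t-s}^\eps\tonde{\nabla_*^{\eps,j}\tonde{\emparg\mapsto \nabla^{\eps,j}u_{s^-}^\eps(y)\,\dd\bar N_s^{\eps,j}(y)}}(x)\fstop
\end{equation}
This identity itself needs a short justification: one checks that the right-hand side solves the SDE (apply Itô's formula for finite-activity jump processes / integration by parts to $s\mapsto P_{t-s}^\eps u_s^\eps$, using $\tfrac{\dd}{\dd s}P_{t-s}^\eps = -\tfrac12\Delta_\eps P_{t-s}^\eps$), and invokes the uniqueness of càdlàg solutions on the finite graph $\T_\eps^d$ already noted in the text. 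Since everything is a finite sum, there are no convergence issues and the stochastic integral against $\bar N^{\eps,j}$ is elementary.

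Next I would apply $\nabla^{\eps,i}$ to both sides (in the $x$ variable). On the first term, $\nabla^{\eps,i}P_t^\eps u_0^\eps = P_t^\eps \nabla^{\eps,i}u_0^\eps$ by \eqref{eq:intertwining}. On the integral term, pulling $\nabla^{\eps,i}$ inside the finite sums and the integral, and commuting it with $P_{t-s}^\eps$ and then with $\nabla_*^{\eps,j}$ via \eqref{eq:intertwining}--\eqref{eq:commutation-gradients}, the integrand becomes $\nabla^{\eps,i}\nabla_*^{\eps,j}P_{t-s}^\eps$ applied to the point mass at $y$, weighted by $\nabla^{\eps,j}u_{s^-}^\eps(y)\,\dd\bar N_s^{\eps,j}(y)$. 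It then remains to compute $\tfrac{\eps^2}{2}\,\nabla^{\eps,i}\nabla_*^{\eps,j}\, p_{t-s}^\eps(\emparg,\emparg)$ explicitly: writing out $\nabla_*^{\eps,j}$ as $\eps^{-1}(\,\cdot\, - \text{shift by }\eps e_j)$ and $\nabla^{\eps,i}$ as $\eps^{-1}(\text{shift by }\eps e_i - \,\cdot\,)$, acting on the $x$-argument of $p_{t-s}^\eps(x,y)$, produces exactly the four-term combination
\begin{equation}
	\tfrac{\eps^2}{2}\cdot\eps^{-2}\tonde{p_{t-s}^\eps(x+\eps e_i,y) - p_{t-s}^\eps(x,y) - p_{t-s}^\eps(x+\eps e_i-\eps e_j,y) + p_{t-s}^\eps(x-\eps e_j,y)}\comma
\end{equation}
which equals $b_{t-s}^{\eps,i,j}(x,y)$ as defined in \eqref{eq:bt-eps} (after reordering terms). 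This yields \eqref{eq:grad-u}.

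The computation is essentially bookkeeping, so I do not expect a genuine obstacle; the one point requiring a little care is the \emph{rigorous derivation of the mild form} — i.e.\ justifying the integration-by-parts identity $u_t^\eps = P_t^\eps u_0^\eps + \int_0^t P_{t-s}^\eps\,(\text{noise}_s)$ at the level of the jump SDE. Since $N^\eps$ has finitely many jumps in $[0,t]$ $\P$-a.s., one can argue pathwise: between consecutive jump times the evolution is the deterministic ODE $\dot u = \tfrac12\Delta_\eps u$, solved by $P_{\emparg}^\eps$, and at each jump time the state is updated by the (linear) averaging map; summing the Duhamel contributions over the inter-jump intervals and telescoping gives the claimed formula with the stochastic integral interpreted as a finite sum over jump times of $\bar N^{\eps,j}$ plus the compensator. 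Once the mild form for $u_t^\eps$ is in hand, the passage to gradients is immediate from the commutation relations, and one should also remark that $s\mapsto \nabla^{\eps,j}u_{s^-}^\eps(y)$ is the left-continuous (predictable) version, which is what makes the stochastic integral against $\dd\bar N_s^{\eps,j}(y)$ well defined.
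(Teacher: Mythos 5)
Your proof is correct and follows essentially the same route as the paper: Duhamel's formula plus the intertwining/commutation relations \eqref{eq:commutation-gradients}--\eqref{eq:intertwining}, and the identical explicit computation of $\tfrac{\eps^2}{2}\nabla^{\eps,i}\nabla_*^{\eps,j}p_{t-s}^\eps$ as the four-term kernel $b_{t-s}^{\eps,i,j}$. The only (immaterial) difference is that the paper first takes $\nabla^{\eps,i}$ of the SDE \eqref{eq:SDE-poisson-compensated} and then applies Duhamel to the resulting linear jump SDE for $\nabla^{\eps,i}u_t^\eps$, whereas you apply Duhamel to $u_t^\eps$ first and then differentiate; these commute thanks to \eqref{eq:intertwining}, so the two derivations are equivalent.
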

\begin{proof}
	Passing to the discrete gradients in \eqref{eq:SDE-poisson-compensated}, we obtain
	\begin{align}\label{eq:SDE-gradients}
		\dd	(\nabla^{\eps,i} u_t^\eps)(x)& = 	
		\tfrac12 \nabla^{\eps,i}\Delta_\eps u_t^\eps(x)\, \dd t+ \tfrac{\eps^2}2 \nabla^{\eps,i}\big(\nabla_*^\eps\cdot \tonde{{\rm diag}(\dd \bar N_t^{\eps,\emparg})\nabla u_{t^-}^\eps}\big)(x)\\
		&=\tfrac12 \Delta_\eps (\nabla^{\eps,i}u_t^\eps)(x)+
		\frac{\eps^2}2\sum_{j=1}^d \nabla_*^{\eps,j}\nabla^{\eps,i}\big(\dd \bar N_t^{\eps,j}\,\nabla^{\eps,j}u_{t^-}^\eps\big)(x)\comma
	\end{align}
	where for the second step we used the commutation relations \eqref{eq:commutation-gradients}. Note that the above finite system of SDEs (with compensated-Poisson noise) has the form $\dd f_t= \tfrac12 \Delta_\eps f_t\,\dd t+ F_t$, hence, recalling that $P_t^\eps=\exp\tonde{\tfrac{t}2\Delta_\eps}$,  $\nabla^{\eps,i}u_t^\eps$  admits the following explicit form, $\P$-a.s., 
	\begin{align}
		\nabla^{\eps,i}u_t^\eps\purple{(x)}&= P_t^\eps\big(\nabla^{\eps,i}u_0^\eps\big)(x)+ \int_0^t P_{t-s}^\eps\bigg(\frac{\eps^2}2\sum_{j=1}^d \nabla_*^{\eps,j}\nabla^{\eps,i}\tonde{\dd \bar N_s^{\eps,j}\,\nabla^{\eps,j}u_{s^-}^\eps}\bigg)(x)\\
		&= P_t^\eps\big(\nabla^{\eps,i}u_0^\eps\big)(x) + \sum_{j=1}^d \frac{\eps^2}{2}\int_0^t  \nabla_*^{\eps,j}\nabla^{\eps,i}P_{t-s}^\eps\big(\dd \bar N_s^{\eps,j}\,\nabla^{\eps,j}u_{s^-}^\eps\big)(x)\comma
	\end{align} 
	where the last step employs the
	intertwinings in \eqref{eq:intertwining}. The desired claim now follows by recalling \eqref{eq:bt-eps} and observing that, for all $g\in \R^{\T_\eps^d}$, we have
	\begin{align}
		\eps^2\,\nabla_*^{\eps,j}\nabla^{\eps,i} P_t^\eps g(x)&= \eps\tonde{\nabla^{\eps,i}P_t^\eps g(x)-\nabla^{\eps,i}P_t^\eps g(x-\eps e_j)}\\
		&= {P_t^\eps g(x+\eps e_i)-P_t^\eps g(x)-P_t^\eps g(x-\eps e_j+\eps e_i)+P_t^\eps g(x-\eps e_j)}\\
		&= \sum_{y\in \T_\eps^d} 2b_t^{\eps,\purple{i,j}}(x,y)\, g(y)\fstop
	\end{align}
	This concludes the proof of the lemma.
\end{proof}
\subsection{Second moments of discrete gradients}
Let us iterate the  expression from Lemma \ref{lemma:mild-solution-gradients} to obtain formulas  for the expectation of $\nabla^{\eps,i}u_t^\eps(x)\, \nabla^{\eps,i}v_t^\eps(x)$, where $(u_t^\eps)_{t\ge 0}$ and $(v_t^\eps)_{t\ge 0}$ are two copies of the averaging process starting from $u_0^\eps$ 	and $v_0^\eps\in \R^{\T_\eps^d}$, respectively,  driven by the same Poisson noise. For this and subsequent results, the following quantity defined in terms of $b_t^{\eps,i,j}(x,y)$ (see \eqref{eq:bt-eps})
\begin{equation}\label{eq:B}
	q_t^{\eps,i,j}(x,y)\eqdef 	\big(\eps^{-1}b_t^{\eps,i,j}(x,y)\big)^2\comma\qquad t\ge 0\comma x,y\in \T_\eps^d\comma i,j=1,\ldots, d\comma
\end{equation}
will play a key role, whose main properties are collected in the next lemma and proved in   \hyperref[sec:app]{Appendix}. 
\begin{lemma}\label{lemma:Q1} For all $t\ge 0$, $x,y \in \T_\eps^d$ and $i,j=1,\ldots, d$, we have 
	\begin{equation}\label{eq:B-symmetry}
		q_t^{\eps,i,j}(x,y)=q_t^{\eps,i,j}(y,x)\fstop
	\end{equation}
	Moreover, the following quantity
	\begin{equation}\label{eq:B-invariance}
		\cQ_\eps(t)\eqdef	\sum_{j=1}^d \sum_{y\in \T_\eps^d} q_t^{\eps,i,j}(x,y)
	\end{equation}
	depends only on $d\ge 1$ and $t\ge 0$ (thus, not on $x\in \T_\eps^d$ and $i=1,\ldots, d$), 
	and satisfies
	\begin{equation}\label{eq:B-12}
		\int_0^\infty \cQ_\eps(t)\, \dd t =\frac12\fstop
	\end{equation}
	
\end{lemma}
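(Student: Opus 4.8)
The plan is to prove all three assertions by an explicit spectral computation on $\T_\eps^d$. Recall that $\Delta_\eps$ is diagonalized by the characters $x\mapsto e^{2\pi\mathrm i k\cdot x}$, with $k$ ranging over $\Z^d$ modulo $\eps^{-1}\Z^d$ (which I identify with $\set{0,1,\dots,\eps^{-1}-1}^d$) and eigenvalue $-\mu_k$, where $\mu_k\eqdef 4\eps^{-2}\sum_{\ell=1}^d\sin^2(\pi\eps k_\ell)$; consequently $p_t^\eps(x,y)=\eps^d\sum_k e^{-t\mu_k/2}e^{2\pi\mathrm i k\cdot(x-y)}$. Substituting this into \eqref{eq:bt-eps} and factoring the finite-difference symbol
\begin{align*}
 e^{2\pi\mathrm i\eps k_i}+e^{-2\pi\mathrm i\eps k_j}-e^{2\pi\mathrm i\eps(k_i-k_j)}-1&=\bigl(e^{2\pi\mathrm i\eps k_i}-1\bigr)\bigl(1-e^{-2\pi\mathrm i\eps k_j}\bigr)\\&=-4\,e^{\mathrm i\pi\eps(k_i-k_j)}\sin(\pi\eps k_i)\sin(\pi\eps k_j)\comma
\end{align*}
one obtains the closed form
\begin{equation*}
 b_t^{\eps,i,j}(x,y)=-2\eps^d\sum_k e^{-t\mu_k/2}\,e^{\mathrm i\pi\eps(k_i-k_j)}\sin(\pi\eps k_i)\sin(\pi\eps k_j)\,e^{2\pi\mathrm i k\cdot(x-y)}\fstop
\end{equation*}
This already exhibits $b_t^{\eps,i,j}(x,y)$, hence $q_t^{\eps,i,j}(x,y)=\eps^{-2}b_t^{\eps,i,j}(x,y)^2$ by \eqref{eq:B}, as a function of $x-y$ alone; since $b_t^{\eps,i,j}$ is moreover real-valued, \eqref{eq:B-symmetry} then follows from the symmetry and translation-invariance of $p_t^\eps$ and from its invariance under the coordinate reflections of $\T_\eps^d$.

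The next step is to compute $\cQ_\eps(t)$. By Parseval on $\T_\eps^d$ — the cross-frequency terms vanish after summing over $y$, and $\bigl|e^{\mathrm i\pi\eps(k_i-k_j)}\bigr|=1$ — one gets $\sum_{y\in\T_\eps^d}b_t^{\eps,i,j}(x,y)^2=4\eps^d\sum_k e^{-t\mu_k}\sin^2(\pi\eps k_i)\sin^2(\pi\eps k_j)$. Summing over $j$ and using the identity $\sum_{j=1}^d\sin^2(\pi\eps k_j)=\tfrac{\eps^2}{4}\mu_k$ then gives
\begin{equation*}
 \cQ_\eps(t)=\eps^{-2}\sum_{j=1}^d\sum_{y\in\T_\eps^d}b_t^{\eps,i,j}(x,y)^2=\eps^d\sum_k\mu_k\,\sin^2(\pi\eps k_i)\,e^{-t\mu_k}\comma
\end{equation*}
which is visibly independent of $x$; averaging this over $i=1,\dots,d$ and applying the same trigonometric identity once more yields the $i$-free expression $\cQ_\eps(t)=\tfrac{\eps^{d+2}}{4d}\sum_k\mu_k^2\,e^{-t\mu_k}$, proving \eqref{eq:B-invariance}. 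Finally, for \eqref{eq:B-12} I would integrate in $t$: since $\mu_0=0$ while $\mu_k>0$ for $k\ne0$, this gives $\int_0^\infty\cQ_\eps(t)\,\dd t=\tfrac{\eps^{d+2}}{4d}\sum_k\mu_k$, and $\sum_k\mu_k=4\eps^{-2}\,d\,\bigl(\sum_{m=0}^{\eps^{-1}-1}\sin^2(\pi\eps m)\bigr)\eps^{-(d-1)}=2d\,\eps^{-(d+2)}$ by the elementary identity $\sum_{m=0}^{n-1}\sin^2(\pi m/n)=n/2$ (valid for $n=\eps^{-1}\ge2$). Hence $\int_0^\infty\cQ_\eps(t)\,\dd t=\tfrac12$.

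I do not expect a serious obstacle: the lemma amounts to a closed-form spectral computation. The two points that call for a little care are the zero mode $k=0$, which contributes nothing to $b_t^{\eps,i,j}$ or $\cQ_\eps$ because the difference symbol vanishes there — and which is precisely why the time-integral in \eqref{eq:B-12} converges although $p_t^\eps$ does not decay in $t$ — and the exactness of the constant $\tfrac12$, independently of $d$ and $\eps$, which is forced by $\sum_{m=0}^{n-1}\sin^2(\pi m/n)=n/2$ together with the $\mu_k$-weighting, and which is exactly the input needed to pin down the limiting constants in Theorems \ref{th:flu} and \ref{th:LLN}. As an independent cross-check, \eqref{eq:B-12} can also be obtained probabilistically: iterating Lemma \ref{lemma:mild-solution-gradients} via the It\^o isometry shows that $G_t^\eps\eqdef\E\bigl[\norm{u_t^\eps}_{H^1(\T_\eps^d)}^2\bigr]$ solves the renewal identity $G_t^\eps=\norm{P_t^\eps\nabla^\eps u_0^\eps}_{L^2(\T_\eps^d)}^2+\int_0^t\cQ_\eps(t-s)\,G_s^\eps\,\dd s$; integrating in $t$, using $\int_0^\infty\norm{P_t^\eps\nabla^\eps u_0^\eps}_{L^2(\T_\eps^d)}^2\,\dd t=\norm{u_0^\eps}_{L^2(\T_\eps^d)}^2-\langle u_0^\eps\rangle_\eps^2$ and the Aldous--Lanoue identity \eqref{eq:AL-identity} (which yields $\int_0^\infty G_t^\eps\,\dd t=2\bigl(\norm{u_0^\eps}_{L^2(\T_\eps^d)}^2-\langle u_0^\eps\rangle_\eps^2\bigr)$), one reads off $1-\int_0^\infty\cQ_\eps=\tfrac12$.
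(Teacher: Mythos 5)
Your spectral computation for \eqref{eq:B-invariance} and \eqref{eq:B-12} is correct, and it takes a genuinely different route from the paper's. The paper exploits the product structure $p_t^\eps(0,x)=\prod_\ell\pi_t^\eps(x^\ell)$ of the $d$-dimensional kernel, expresses $q_t^{\eps,i,j}$ through one-dimensional gradients and Laplacians of $\pi_t^\eps$, introduces the $L^2(\T_\eps)$-norms $\cR_\eps,\cS_\eps,\cT_\eps$ of $\pi_t^\eps$, $\nabla^\eps\pi_t^\eps$, $\Delta_\eps\pi_t^\eps$, and derives $\cQ_\eps(t)=-\tfrac{\eps}{2}\tonde{\cR_\eps(t)^{d-1}\cS_\eps(t)}'$, which integrates to $\tfrac12$ by reading off boundary values. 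You instead diagonalize $\Delta_\eps$, factor the finite-difference symbol, apply Parseval, and finish with the sine-sum identity. Both are elementary; the paper's 1D reduction is the natural launchpad for Lemma \ref{lemma:Q2}, where $\cQ_\eps$ is further split into $\mathfrak b_\eps$ and $\mathfrak c_\eps$ by the same calculus, while your Fourier version makes the role of the zero mode, the $\eps$-independence, and the exact value $\tfrac12$ especially transparent. The probabilistic cross-check via the renewal identity and Aldous--Lanoue is a nice independent confirmation.

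You are, however, too quick with \eqref{eq:B-symmetry}, asserting it follows from ``symmetry, translation-invariance and coordinate reflections''. That sketch cannot be completed, because the pointwise identity $q_t^{\eps,i,j}(x,y)=q_t^{\eps,i,j}(y,x)$ is in fact false for $i\neq j$. By translation invariance and $p_t^\eps(z,0)=p_t^\eps(-z,0)$, the claim is equivalent to $q_t^{\eps,i,j}(z,0)=q_t^{\eps,j,i}(z,0)$, and the product structure gives, for $i\neq j$,
\begin{equation}
q_t^{\eps,i,j}(z,0)=\frac{\eps^{-2}}{4}\tonde{\prod_{\ell\neq i,j}\pi_t^\eps(z^\ell)}^{2}\tonde{\pi_t^\eps(z^i+\eps)-\pi_t^\eps(z^i)}^2\tonde{\pi_t^\eps(z^j)-\pi_t^\eps(z^j-\eps)}^2\comma
\end{equation}
which is not symmetric under $i\leftrightarrow j$: take $d=2$, $\eps=\tfrac13$, $z=(\eps,0)$; then $\pi_t^\eps(2\eps)=\pi_t^\eps(\eps)$ forces $q_t^{\eps,1,2}(z,0)=0$, while $q_t^{\eps,2,1}(z,0)=\tfrac{\eps^{-2}}{4}\tonde{\pi_t^\eps(0)-\pi_t^\eps(\eps)}^4>0$ for every $t>0$. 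In your Fourier formula the obstruction is the phase $e^{i\pi\eps(k_i-k_j)}$, which has unit modulus --- so Parseval never sees it --- but is not $i\leftrightarrow j$ symmetric. The paper's Appendix never actually proves \eqref{eq:B-symmetry} either; what it proves (in Lemma \ref{lemma:Q2}), and what is the only thing used downstream (e.g.\ in Lemma \ref{lemma:H1-contractivity}, to sum over the first index pair in \eqref{eq:Pi-eps-k}), is the weaker statement that $\sum_y q_t^{\eps,i,j}(x,y)$ depends only on $(t,\car_{i=j})$. Your Parseval identity $\sum_y q_t^{\eps,i,j}(x,y)=4\eps^{d-2}\sum_k e^{-t\mu_k}\sin^2(\pi\eps k_i)\sin^2(\pi\eps k_j)$, together with a permutation of the $k$-coordinates, delivers exactly this; you should record the weaker, summed, version explicitly rather than \eqref{eq:B-symmetry}.
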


We  state a notational remark. 
\begin{remark}[Notation]\label{rmk:notation}
	Here and all throughout, we adopt the following standard notation: for all  $t> 0$ and $k\in \N$, 
	\begin{equation}
		[0,t]_>^k\eqdef \set{(s_1,\ldots, s_k)\in [0,t]^k: s_1>\ldots >s_k}\comma
	\end{equation}
	while, for all $h\in \cC([0,t]_>^k)$, 
	\begin{equation}
		\int_{[0,t]_>^k} \dd s_1\cdots \dd s_k\, h(\emparg)\eqdef \int_0^t\int_0^{s_1}\cdots \int_0^{s_{k-1}} h(\emparg)\, \dd s_k\cdots \dd s_1\fstop
	\end{equation}
	Moreover, within this context, we implicitly identify $s_0\eqdef t$ and, whenever $x\in \T_\eps^d$ is a fixed vertex and $i=1,\ldots, d$ a fixed direction, $y_0\eqdef x$ and $j_0=i$. We employ analogous notations when replacing $0$ (resp.\ $t$) by $-\infty$ (resp.\ $\infty$), in $[0,t]_>^k$.
\end{remark}
\begin{proposition}\label{pr:second-moments}
	For all $u_0^\eps, v_0^\eps\in \R^{\T_\eps^d}$, $t> 0$, $x\in \T_\eps^d$ and $i=1,\ldots, d$,  we have
	\begin{equation}\label{eq:second-moment-mixed}
		\E\big[\nabla^{\eps,i}u_t^\eps(x)\, \nabla^{\eps,i}v_t^\eps(x)\big] = \sum_{k=0}^\infty \Pi_t^{\eps,i,k}(u_0^\eps,v_0^\eps)(x)\comma
	\end{equation}
	the above series being absolutely convergent.
	Here, each summand is defined, for $k=0$, as	
	\begin{equation}\label{eq:Pi-eps-0}
		\Pi_t^{\eps,i,0}(u_0^\eps,v_0^\eps)(x)\eqdef P_t^\eps(\nabla^{\eps,i}u_0^\eps)(x)\, P_t^\eps(\nabla^{\eps,i}v_0^\eps)(x)\comma
	\end{equation} while, for $k\ge 1$, as	(identifying $s_0=t$, $y_0=x$ and $j_0=i$, cf.\ Remark \ref{rmk:notation})
	\begin{align}\label{eq:Pi-eps-k}
		&\qquad 	\Pi_t^{\eps,i,k}(u_0^\eps,v_0^\eps)(x)\\
		&  \eqdef \int_{[0,t]_>^k} \dd s_1\cdots \dd s_k \sum_{y_1,\ldots, y_k\in \T_\eps^d}\sum_{j_1,\ldots, j_k=1}^d \tonde{\prod_{\ell=1}^k q_{s_{\ell-1}-s_\ell}^{\eps,j_{\ell-1},j_\ell}(y_{\ell-1},y_\ell)} \Pi_{s_k}^{\eps,j_k,0}(u_0^\eps,v_0^\eps)(y_k)\comma
	\end{align}
	where $q^\eps$ is given in \eqref{eq:B}.
	In particular, for $u_0^\eps=v_0^\eps$, we have
	\begin{equation}\label{eq:second-moment}
		\E\big[\big(\nabla^{\eps,i} u_t^\eps(x)\big)^2\big] = \sum_{k=0}^\infty \Pi_t^{\eps,i,k}(u_0^\eps)(x)\comma\quad \text{with}\ \Pi_t^{\eps,i,k}(u_0	^\eps)\eqdef\Pi_t^{\eps,i,k}(u_0^\eps,u_0^\eps)\fstop	
	\end{equation}
\end{proposition}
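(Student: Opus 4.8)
The plan is to iterate the mild-solution formula from Lemma~\ref{lemma:mild-solution-gradients} applied simultaneously to $u_t^\eps$ and $v_t^\eps$, which share the same compensated-Poisson driving noise $\bar N^\eps$. Writing $\nabla^{\eps,i}u_t^\eps(x) = P_t^\eps(\nabla^{\eps,i}u_0^\eps)(x) + \int_0^t\!\sum_{j,y} b_{t-s}^{\eps,i,j}(x,y)\,\nabla^{\eps,j}u_{s^-}^\eps(y)\,\dd\bar N_s^{\eps,j}(y)$ and likewise for $v$, I would multiply the two expressions and take expectations. The crucial input is the structure of the quadratic covariation of the stochastic integrals: the $\bar N^{\eps,j}(y)$ are independent compensated Poisson processes with $\dd\langle \bar N^{\eps,j}(y)\rangle_s = \eps^{-2}\dd s$, so the product of two single stochastic integrals has expectation $\eps^{-2}\int_0^t\!\sum_{j,y} b_{t-s}^{\eps,i,j}(x,y)^2\,\E[\nabla^{\eps,j}u_{s^-}^\eps(y)\,\nabla^{\eps,j}v_{s^-}^\eps(y)]\,\dd s$ by It\^o isometry for Poisson integrals, while the cross terms (one $P_t^\eps$-term times one integral, or integrals against distinct $(j,y)$) vanish in expectation. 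Recognizing $\eps^{-2}b_{t-s}^{\eps,i,j}(x,y)^2 = q_{t-s}^{\eps,i,j}(x,y)$ from \eqref{eq:B}, this exhibits the map $t\mapsto \E[\nabla^{\eps,i}u_t^\eps\,\nabla^{\eps,i}v_t^\eps]$ as a solution of a closed linear renewal-type integral equation with kernel $q^\eps$ and forcing term $\Pi_t^{\eps,i,0}$.

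Once the integral equation is in place, the representation \eqref{eq:second-moment-mixed}--\eqref{eq:Pi-eps-k} is obtained by Picard iteration: substitute the equation into itself repeatedly, so that after $k$ steps one picks up a chain of kernels $\prod_{\ell=1}^k q_{s_{\ell-1}-s_\ell}^{\eps,j_{\ell-1},j_\ell}(y_{\ell-1},y_\ell)$ integrated over the simplex $[0,t]_>^k$ and summed over intermediate vertices/directions, landing on $\Pi_{s_k}^{\eps,j_k,0}$, which is precisely the definition of $\Pi_t^{\eps,i,k}$ (using the $s_0=t,\ y_0=x,\ j_0=i$ convention of Remark~\ref{rmk:notation}). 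The remainder after $k$ steps has the same form but with $\Pi_{s_k}^{\eps,j_k,0}$ replaced by the full second moment $\E[\nabla^{\eps,j_k}u_{s_k}^\eps(y_k)\,\nabla^{\eps,j_k}v_{s_k}^\eps(y_k)]$; to conclude, one must show this remainder tends to $0$ as $k\to\infty$, which simultaneously gives the absolute convergence of the series.

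The key estimate making both the convergence and the absolute summability work is the normalization in Lemma~\ref{lemma:Q1}: $\sum_{j,y} q_t^{\eps,i,j}(x,y) = \cQ_\eps(t)$ depends only on $d$ and $t$, with $\int_0^\infty \cQ_\eps(t)\,\dd t = \tfrac12$. Bounding crudely $|\E[\nabla^{\eps,j}u_s^\eps(y)\,\nabla^{\eps,j}v_s^\eps(y)]| \le \tfrac12(\|\nabla^{\eps}u_0^\eps\|_\infty^2 + \|\nabla^{\eps}v_0^\eps\|_\infty^2) =: M$ uniformly in $s,y,j$ (which holds since the averaging dynamics does not increase the sup-norm of gradients — each update replaces two values by their average), the $k$-th term of the series is bounded in absolute value by $M\int_{[0,t]_>^k}\dd s_1\cdots\dd s_k\,\prod_{\ell=1}^k \cQ_\eps(s_{\ell-1}-s_\ell)$, and a change of variables to increments $r_\ell = s_{\ell-1}-s_\ell$ shows this is at most $M\,\big(\int_0^t\cQ_\eps\big)^k/ \text{(nothing)}\le M(1/2)^k$ — more carefully, the simplex-ordered integral of a product of functions of consecutive differences is dominated by $\tfrac{1}{k!}$ times... actually simply by $\big(\int_0^\infty\cQ_\eps\big)^k = 2^{-k}$ since the increments are free once ordering is dropped. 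Hence the series converges absolutely and geometrically, uniformly in $\eps$, and the tail/remainder vanishes.

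The main obstacle I anticipate is the bookkeeping in the iteration — correctly tracking the nested simplex domains, the telescoping of time variables through the convention $s_0 = t$, and verifying that every cross term genuinely has zero expectation (this requires the independence of the $\bar N^{\eps,j}(y)$ across $(j,y)$ together with the predictability of the integrands, i.e.\ that $\nabla^{\eps,j}u_{s^-}^\eps(y)$ is $\cF_{s^-}^\eps$-measurable, so that $\E[\text{integrand}\cdot \dd\bar N_s]=0$). The analytic content beyond that is light, resting entirely on Lemma~\ref{lemma:Q1}.
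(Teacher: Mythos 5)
Your overall strategy mirrors the paper's: write $\nabla^{\eps,i}u_t^\eps$ and $\nabla^{\eps,i}v_t^\eps$ in mild form via Lemma~\ref{lemma:mild-solution-gradients}, multiply and take expectations using the It\^o isometry for the independent compensated Poisson martingales (cross terms vanishing by predictability and independence), obtain a closed Volterra-type renewal equation with kernel $q^\eps$, iterate \`a la Picard, and control the remainder through Lemma~\ref{lemma:Q1}'s normalization $\int_0^\infty\cQ_\eps = \tfrac12$. The simplex-to-increments change of variables giving the $2^{-k}$ bound is also exactly what the paper does (cf.\ the proof of Lemma~\ref{lemma:H1-contractivity}). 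So the architecture is sound and faithful to the original.

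However, there is a genuine gap in the step where you kill the Picard remainder. You assert a pathwise maximum principle for \emph{gradients} — ``the averaging dynamics does not increase the sup-norm of gradients'' — to justify the uniform bound $|\E[\nabla^{\eps,j}u_s^\eps(y)\,\nabla^{\eps,j}v_s^\eps(y)]| \le \tfrac12(\|\nabla^\eps u_0^\eps\|_\infty^2 + \|\nabla^\eps v_0^\eps\|_\infty^2)$. This principle is false. Consider $d=1$ with $u(x-\eps)=0$, $u(x)=1$, $u(x+\eps)=2$: the gradients at $x-\eps$ and $x$ equal $\eps^{-1}$, but after the update at $(x,x+\eps)$ both values become $\tfrac32$, so the gradient at $x-\eps$ jumps to $\tfrac32\eps^{-1}$. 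Thus $\|\nabla^\eps u_t^\eps\|_\infty$ is not monotone. (Indeed, even in expectation the paper only proves $\E[(\nabla^{\eps,i}u_t^\eps)^2]\le 2\max_j\|\nabla^{\eps,j}u_0^\eps\|_\infty^2$ — with a factor $2$, not $1$ — and derives this \emph{from} the series, so it cannot be used circularly to justify the iteration.) The fix is cheap: the values $u_s^\eps(x)$ stay in $[\min u_0^\eps,\max u_0^\eps]$ pathwise, hence $|\nabla^{\eps,j}u_s^\eps(y)|\le 2\eps^{-1}\|u_0^\eps\|_\infty$ deterministically, which gives a (crude, $\eps$-dependent, but uniform in $s,y,j$) bound that suffices to send the remainder to zero. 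Alternatively, treat the case $u_0^\eps=v_0^\eps$ first by non-negativity (as the paper does), deduce the uniform second-moment bound from the resulting geometric series, and then invoke Cauchy--Schwarz for the mixed case.
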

\begin{proof}The two proofs of \eqref{eq:second-moment-mixed} and \eqref{eq:second-moment} are essentially the same; for notational convenience, let us discuss in detail the one of \eqref{eq:second-moment}, and only quickly comment on that of \eqref{eq:second-moment-mixed}.
	Recall \eqref{eq:grad-u} and note that
	$P_t^\eps\big(\nabla^{\eps,i}u_0^\eps\big)$ is $\cF_0^\eps$-measurable, while
	\begin{equation}
		\E\bigg[\int_0^t b_{t-s}^{\eps,i,j}(x,y)\, \big(\dd \bar N_s^{\eps,j}\,\nabla^{\eps,j}u_{s^-}^\eps\big)(y)\bigg|\cF_0^\eps\bigg] = 0\comma
	\end{equation}
	because $s\mapsto b_{t-s}^{\eps,i,j}(x,y)\, \nabla^{\eps,j} u_{s^-}^\eps(y)$ is predictable and $s\mapsto \bar N^{\eps,j}_s(y)$ is a martingale with respect to $(\cF_s^\eps)_{s\ge 0}$. Hence, 
	the left-hand side of \eqref{eq:second-moment} reads as
	\begin{align}
		&\E\big[\big(\nabla^{\eps,i} u_t^\eps(x)\big)^2\big]\\
		&\qquad= \tonde{P_t^\eps(\nabla^{\eps,i}u_0^\eps)(x)}^2 + \E\bigg[\bigg(\sum_{j=1}^d\sum_{y\in \T_\eps^d}\int_0^t b_{t-s}^{\eps,i,j}(x,y)\,\big(\dd \bar N_s^{\eps,j}(y)\, \nabla^{\eps,j} u_{s^-}^\eps(y)\big)\bigg)^2\bigg]\\
		&\qquad= \tonde{P_t^\eps(\nabla^{\eps,i}u_0^\eps)(x)}^2 + \sum_{j=1}^d\sum_{y\in \T_\eps^d}\int_0^t \big(b_{t-s}^{\eps,i,j}(x,y)\big)^2\, \eps^{-2}\, 	\E\big[\big(\nabla^{\eps,j}u_s^\eps(y)\big)^2\big]\, \dd s\\
		&\qquad = \tonde{P_t^\eps(\nabla^{\eps,i}u_0^\eps)(x)}^2 + \sum_{j=1}^d\sum_{y\in \T_\eps^d}\int_0^t q_{t-s}^{\eps,i,j}(x,y)\, 	\E\big[\big(\nabla^{\eps,j}u_s^\eps(y)\big)^2\big]\, \dd s		\comma
	\end{align}
	where for the second step we used the fact that $\bar N^\eps$ is a family of i.i.d.\ martingales, each with predictable quadratic variation given by $s\mapsto \eps^{-2}\,s$, whereas  the third step employs the definition in \eqref{eq:B}. Hence, letting $U^{\eps,i}_t(x)\eqdef \E\big[(\nabla^{\eps,i}u_t^\eps(x))^2\big]$ and   recalling \eqref{eq:Pi-eps-0}, we may	 rewrite the above identity as
	\begin{equation}
		U_t^{\eps,i}(x)= \Pi_t^{\eps,i,\purple{0}}(u_0^\eps)(x)+ \sum_{j=1}^d\sum_{y\in \T_\eps^d}\int_0^t q_{t-s}^{\eps,i,j}(x,y)\, U_s^{\eps,j}(y)\, \dd s\comma
	\end{equation}
	which, by non-negativity of all three functions involved,  may be iterated infinitely often, yielding \eqref{eq:second-moment}, which \textit{a priori} may be infinite. However, recalling \eqref{eq:Pi-eps-0} and \eqref{eq:B-12}, we first  obtain\purple{, for all $k\in \N_0$,}
	\begin{equation}
		\sup_{t\ge 0}\tnorm{	\Pi_t^{\eps,j,\purple{k}}(u_0^\eps)}_{L^\infty(\T_\eps^d)}\le \tnorm{\nabla^{\eps,j}u_0^\eps}_{L^\infty(\T_\eps^d)}^2 \purple{2^{-k}}<\infty\comma\qquad j=1,\ldots, d\comma
	\end{equation}
	and, then,
	\begin{equation}
		\E\big[(\nabla^{\eps,i}u_{\purple{t}}^\eps(x))^2\big]\le \max_{j=1,\ldots, d}\norm{\nabla^{\eps,j}u_0^\eps}_{L^\infty(\T_\eps^d)}^2\sum_{k=0}^\infty 2^{-k}=2\max_{j=1,\ldots, d}\norm{\nabla^{\eps,j}u_0^\eps}_{L^\infty(\T_\eps^d)}^2<\infty\fstop
	\end{equation} 
	This proves the desired claim when $u_0^\eps=v_0^\eps$, i.e., \eqref{eq:second-moment}. When $u_0^\eps \neq v_0^\eps$, we similarly have
	\begin{align}
		&\E\big[\nabla^{\eps,i}u_t^\eps(x)\, \nabla^{\eps,i}v_t^\eps(x)\big] \\
		&\qquad= \Pi_t^{\eps,i,0}(u_0^\eps,v_0^\eps)(x)+ \sum_{j=1}^d \sum_{y\in \T_\eps^d}\int_0^t q_{t-s}^{\eps,i,j}(x,y)\,	 \E\big[\nabla^{\eps,i}u_s^\eps(x)\, \nabla^{\eps,i}v_s^\eps(x)\big]\,\dd s\comma
	\end{align}
	which, thanks to
	\begin{equation}
		\sup_{t\ge 0}	\tnorm{\Pi_t^{\eps,j,0}(u_0^\eps,v_0^\eps)}_{L^\infty(\T_\eps^d)}\le \tnorm{\nabla^{\eps,j}u_0^\eps}_{L^\infty(\T_\eps^d)}\tnorm{\nabla^{\eps,j}v_0^\eps}_{L^\infty(\T_\eps^d)}<\infty\comma\qquad j=1,\ldots,d\comma
	\end{equation}
	and the identity in \eqref{eq:B-12}, may be iterated, yielding \eqref{eq:second-moment-mixed}. 
\end{proof}

\subsection{Technical estimates}
All lemmas in this section are required for the proof of Lemma \ref{lemma:varLambda-2} below. We start with the following upper bound on integrals of second moments of discrete gradients, which may be thought of as an annealed pseudo-contractivity bound in $H^1(\T_\eps^d)$.	
\begin{lemma}\label{lemma:H1-contractivity} Recall the definition of $\norm{\emparg}_{H^1(\T_\eps^d)}$ from \eqref{eq:H1-space}. Then, for all $u_0^\eps\in \R^{\T_\eps^d}$ and $t> 0$, we have
	\begin{equation}\label{eq:ub-H1-norm}
		\E\big[\norm{u_t^\eps}_{H^1(\T_\eps^d)}^2\big]\le 2 \norm{u_0^\eps}_{H^1(\T_\eps^d)}^2\fstop
	\end{equation}
\end{lemma}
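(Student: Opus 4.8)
The plan is to combine the mild-solution formula from Lemma \ref{lemma:mild-solution-gradients} with the Itô isometry for compensated Poisson integrals, and then close the estimate using the key identity \eqref{eq:B-12}. Concretely, I would start from the representation
\begin{equation}
	\nabla^{\eps,i}u_t^\eps(x)= P_t^\eps\tonde{\nabla^{\eps,i}u_0^\eps}(x) + \sum_{j=1}^d \sum_{y\in \T_\eps^d}\int_0^t b_{t-s}^{\eps,i,j}(x,y)\, \nabla^{\eps,j}u_{s^-}^\eps(y)\, \dd \bar N_s^{\eps,j}(y)
\end{equation}
and take second moments. The cross term vanishes because the stochastic integral has zero conditional mean given $\cF_0^\eps$ (as in the proof of Proposition \ref{pr:second-moments}), and the martingales $\bar N^{\eps,j}(y)$ are i.i.d.\ with predictable quadratic variation $s\mapsto \eps^{-2}s$, so the Itô isometry gives
\begin{equation}
	\E\big[(\nabla^{\eps,i}u_t^\eps(x))^2\big] = \tonde{P_t^\eps(\nabla^{\eps,i}u_0^\eps)(x)}^2 + \sum_{j=1}^d\sum_{y\in \T_\eps^d}\int_0^t q_{t-s}^{\eps,i,j}(x,y)\,\E\big[(\nabla^{\eps,j}u_s^\eps(y))^2\big]\, \dd s\comma
\end{equation}
exactly as derived there. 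Now I would sum over $x\in \T_\eps^d$ and $i=1,\dots,d$ and multiply by $\eps^d$ to form $\E[\norm{u_t^\eps}_{H^1(\T_\eps^d)}^2]$.

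For the first term, the intertwining relations \eqref{eq:intertwining} and the fact that $P_t^\eps$ is a contraction on $L^2(\T_\eps^d)$ give $\eps^d\sum_{x,i}(P_t^\eps(\nabla^{\eps,i}u_0^\eps)(x))^2 \le \norm{u_0^\eps}_{H^1(\T_\eps^d)}^2$. For the second term, I would use the symmetry \eqref{eq:B-symmetry} of $q^\eps$ together with the invariance property \eqref{eq:B-invariance}–\eqref{eq:B-12}: summing $q_{t-s}^{\eps,i,j}(x,y)$ over $i$ and over $x$ (equivalently over $j$ and $y$, by symmetry) produces the factor $\cQ_\eps(t-s)$, independent of the remaining indices. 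Hence
\begin{equation}
	\eps^d\sum_{x\in \T_\eps^d}\sum_{i=1}^d\sum_{j=1}^d\sum_{y\in \T_\eps^d}\int_0^t q_{t-s}^{\eps,i,j}(x,y)\,\E\big[(\nabla^{\eps,j}u_s^\eps(y))^2\big]\, \dd s = \int_0^t \cQ_\eps(t-s)\,\E\big[\norm{u_s^\eps}_{H^1(\T_\eps^d)}^2\big]\, \dd s\comma
\end{equation}
and by \eqref{eq:B-12} the kernel $\cQ_\eps$ has total mass $\tfrac12$ over $[0,\infty)$.

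Putting these together, and writing $h(t)\eqdef \E[\norm{u_t^\eps}_{H^1(\T_\eps^d)}^2]$, I obtain the renewal-type inequality
\begin{equation}
	h(t) \le \norm{u_0^\eps}_{H^1(\T_\eps^d)}^2 + \int_0^t \cQ_\eps(t-s)\, h(s)\, \dd s\comma\qquad t\ge 0\fstop
\end{equation}
Since $h$ is bounded on $[0,t]$ (by Proposition \ref{pr:second-moments}, $h(s)\le 2\max_j\norm{\nabla^{\eps,j}u_0^\eps}_{L^\infty}^2$), I can conclude by a Grönwall/iteration argument: setting $M\eqdef \sup_{s\le t} h(s)$, the inequality gives $M \le \norm{u_0^\eps}_{H^1(\T_\eps^d)}^2 + M\int_0^\infty \cQ_\eps(r)\,\dd r = \norm{u_0^\eps}_{H^1(\T_\eps^d)}^2 + \tfrac12 M$, hence $M \le 2\norm{u_0^\eps}_{H^1(\T_\eps^d)}^2$, which is the desired bound \eqref{eq:ub-H1-norm}.

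The only mildly delicate point is justifying the interchange of the infinite sum over $\T_\eps^d$ (and the time integral) with the expectation when applying the Itô isometry; but since $\T_\eps^d$ is finite and all integrands are nonnegative with finite second moments guaranteed by Proposition \ref{pr:second-moments}, Tonelli and the finiteness already established make this routine. The main ingredient doing the real work is the normalization $\int_0^\infty \cQ_\eps(t)\,\dd t = \tfrac12$ from Lemma \ref{lemma:Q1}, which is exactly what forces the constant $2$ (rather than something growing in $t$).
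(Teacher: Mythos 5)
Your proof is correct, and the core ingredients (the mild-solution formula, the It\^o isometry that produces the recursion for $U_t^{\eps,i}(x)=\E[(\nabla^{\eps,i}u_t^\eps(x))^2]$, Jensen/$L^2$-contraction of $P_t^\eps$ for the initial term, and the normalization $\int_0^\infty\cQ_\eps=\frac12$ from Lemma~\ref{lemma:Q1}) are exactly those of the paper. The difference is purely in how the estimate is closed: the paper starts from the already-iterated series $\E[\norm{u_t^\eps}_{H^1}^2]=\sum_k \eps^d\sum_{x,i}\Pi_t^{\eps,i,k}(u_0^\eps)(x)$ and shows each term is $\le 2^{-k}\norm{u_0^\eps}_{H^1}^2$, then sums the geometric series; you instead keep the one-step recursion for $h(t)=\E[\norm{u_t^\eps}_{H^1}^2]$, pass to the renewal inequality $h(t)\le\norm{u_0^\eps}_{H^1}^2+\int_0^t\cQ_\eps(t-s)h(s)\,\dd s$, and close by a supremum/Gr\"onwall argument $M\le\norm{u_0^\eps}_{H^1}^2+\tfrac12 M$. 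Both are the same algebra in disguise ($\sum_{k\ge0}(1/2)^k=(1-1/2)^{-1}=2$), and both require the \emph{a priori} finiteness of $h$ on $[0,t]$, which you correctly get from Proposition~\ref{pr:second-moments}. One point to state more explicitly if you write this up: turning $\sum_{i}\sum_{x}q_{t-s}^{\eps,i,j}(x,y)$ into $\cQ_\eps(t-s)$ needs both the $x\leftrightarrow y$ symmetry of $q$ \emph{and} the fact that $\cQ_\eps^{i,j}$ depends on $(i,j)$ only through $\car_{i=j}$ (hence is symmetric in $i\leftrightarrow j$), so that $\sum_i\cQ_\eps^{i,j}=\sum_j\cQ_\eps^{i,j}=\cQ_\eps$; the paper's $\eqref{eq:B-invariance}$ alone gives independence of the first index, and the extra symmetry is what justifies swapping which pair of indices is summed.
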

\begin{proof}
	By definition of $\norm{\emparg}_{H^1(\T_\eps^d)}$ and Proposition \ref{pr:second-moments}, we have
	\begin{equation}
		\E\big[\norm{u_t^\eps}_{H^1(\T_\eps^d)}^2\big] = \sum_{k=0}^\infty\eps^d \sum_{x\in \T_\eps^d}\sum_{i=1}^d \Pi_t^{\eps,i,k}(u_0^\eps)(x)\fstop
	\end{equation}
	Hence, the desired claim in \eqref{eq:ub-H1-norm} follows from 
	\begin{equation}\label{eq:estimate-Pi-k}
		\eps^d \sum_{x\in \T_\eps^d}\sum_{i=1}^d \Pi_t^{\eps,i,k}(u_0^\eps)(x)\le 2^{-k}\norm{u_0^\eps}_{H^1(\T_\eps^d)}^2\comma \qquad k \in \N_0\fstop
	\end{equation}
	We start with the case $k=0$. Recalling \eqref{eq:Pi-eps-0}, we have
	\begin{align}
		\eps^d\sum_{x\in \T_\eps^d}\sum_{i=1}^d \Pi_t^{\eps,i,0}(u_0^\eps)(x)&= \sum_{i=1}^d\eps^d\sum_{x\in \T_\eps^d} \tonde{P_t^\eps(\nabla^{\eps,i} u_0^\eps)(x)}^2\\
		&\le \sum_{i=1}^d \eps^d \sum_{x\in \T_\eps^d} P_t^\eps(\nabla^{\eps,i}u_0^\eps)^2(x)\\
		&=\sum_{i=1}^d \eps^d \sum_{x\in \T_\eps^d} (\nabla^{\eps,i}u_0^\eps(x))^2 = \norm{u_0^\eps}_{H^1(\T_\eps^d)}^2\comma
		\label{eq:estimate-Pi-0}
	\end{align}
	where the second step follows by Jensen inequality, while the third step used the symmetry of $P_t^\eps$ in $L^2(\T_\eps^d)$. This proves \eqref{eq:estimate-Pi-k} for $k=0$. For the general case $k\ge 1$, first recall \eqref{eq:Pi-eps-k}. Then, 
	\begin{align}
		&\eps^d \sum_{x\in \T_\eps^d}\sum_{i=1}^d \Pi_t^{\eps,i,k}(u_0^\eps)(x) 
		\\
		&\qquad= \int_{[0,t]_>^k}\dd s_1\cdots \dd s_k{\sum_{y_0,\ldots, y_{k-1}\in \T_\eps^d}\sum_{j_0,\ldots, j_{k-1}=1}^d \tonde{\prod_{\ell=1}^k q_{s_{\ell-1}-s_\ell}^{\eps,j_{\ell-1},j_\ell}(y_{\ell-1},y_\ell)}}\\
		&\qquad\qquad \times \tonde{\eps^d\sum_{y_k\in \T_\eps^d}\sum_{j_k=1}^d  \Pi_{s_k}^{\eps,j_k,0}(u_0^\eps)(y_k)}\\
		&\qquad = \int_{[0,t]_>^k}\dd s_1\cdots \dd s_k\tonde{ \prod_{\ell=1}^k \cQ_\eps(s_\ell-s_{\ell-1})} \tonde{\eps^d\sum_{y_k\in \T_\eps^d}\sum_{j_k=1}^d  \Pi_{s_k}^{\eps,j_k,0}(u_0^\eps)(y_k)}\\
		&\qquad\le \norm{u_0}_{H^1(\T_\eps^d)}^2\int_{[0,t]_>^k} \dd s_1\cdots \dd s_k\tonde{\prod_{\ell=1}^k\cQ_\eps(s_\ell-s_{\ell-1})}\\
		&\qquad\le \norm{u_0^\eps}_{H^1(\T_\eps^d)}^2 \tonde{\int_0^\infty \cQ_\eps(s)\, \dd s}^k\\
		&\qquad=2^{-k}\norm{u_0^\eps}_{H^1(\T_\eps^d)}^2\comma
		\label{eq:estimate-2-k-H1}
	\end{align}
	where the second step follows from \eqref{eq:B-symmetry}--\eqref{eq:B-invariance}, the third step used the inequality in \eqref{eq:estimate-Pi-0}, the fourth step used the non-negativity of $\cQ_\eps$, whereas the fifth step follows from \eqref{eq:B-12}. This proves \eqref{eq:estimate-Pi-k} for $k\ge 1$, thus, concludes the proof of the lemma.
\end{proof}

The following lemma presents a second bound on the expectation of discrete gradients. 
\begin{lemma}\label{lemma:W11-W1oo}
	For all $u_0^\eps, v_0^\eps\in \R^{\T_\eps^d}$ and $t> 0$,  we have
	\begin{equation}\label{eq:estimate-W11-W1oo}
\purple{\eps^d\sum_{x\in \T_\eps^d}\sum_{j=1}^d\abs{\E\big[\nabla^{\eps,j}u_t^\eps(x)\, \nabla^{\eps,j}v_t^\eps(x)\big]}}
			\le 2\,\big(\max_{i=1,\ldots, d}\norm{\nabla^{\eps,i}u_0^\eps}_{L^\infty(\T_\eps^d)}\big)\bigg(\sum_{i=1}^d\norm{\nabla^{\eps,i}v_0^\eps}_{L^1(\T_\eps^d)}\bigg)\fstop
	\end{equation}
\end{lemma}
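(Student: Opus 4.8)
The plan is to start from the iterated–integral representation of the mixed second moment supplied by Proposition~\ref{pr:second-moments} and to estimate the resulting series termwise. By the triangle inequality,
\begin{equation}
	\eps^d\sum_{x\in\T_\eps^d}\sum_{j=1}^d\abs{\E\big[\nabla^{\eps,j}u_t^\eps(x)\,\nabla^{\eps,j}v_t^\eps(x)\big]}\le \sum_{k=0}^\infty \eps^d\sum_{x\in\T_\eps^d}\sum_{j=1}^d\abs{\Pi_t^{\eps,j,k}(u_0^\eps,v_0^\eps)(x)}\comma
\end{equation}
so it suffices to bound the $k$-th summand by $2^{-k}\big(\max_{i}\norm{\nabla^{\eps,i}u_0^\eps}_{L^\infty(\T_\eps^d)}\big)\big(\sum_{i}\norm{\nabla^{\eps,i}v_0^\eps}_{L^1(\T_\eps^d)}\big)$ and then sum the geometric series, whose total is $\sum_{k\ge 0}2^{-k}=2$.

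For $k=0$ one recalls that $\Pi_t^{\eps,j,0}(u_0^\eps,v_0^\eps)(x)=P_t^\eps(\nabla^{\eps,j}u_0^\eps)(x)\,P_t^\eps(\nabla^{\eps,j}v_0^\eps)(x)$. I would bound the first factor in sup-norm, using that $P_t^\eps$ is an $L^\infty(\T_\eps^d)$-contraction, in order to extract the factor $\max_i\norm{\nabla^{\eps,i}u_0^\eps}_{L^\infty(\T_\eps^d)}$; for the second factor I use $\abs{P_t^\eps g}\le P_t^\eps\abs{g}$ together with the fact that $P_t^\eps$ preserves the integral against the uniform measure $\eps^d\nu_{\T_\eps^d}$ (equivalently $p_t^\eps$ is doubly stochastic, since $p_t^\eps(x,y)=p_t^\eps(y,x)$), so that $\eps^d\sum_x P_t^\eps\abs{\nabla^{\eps,j}v_0^\eps}(x)=\norm{\nabla^{\eps,j}v_0^\eps}_{L^1(\T_\eps^d)}$. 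Summing over $j=1,\dots,d$ gives the claimed bound with the factor $2^{-0}=1$.

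For $k\ge 1$ I insert the definition \eqref{eq:Pi-eps-k} and apply, inside the iterated integral, the same pointwise estimate $\abs{\Pi_{s_k}^{\eps,j_k,0}(u_0^\eps,v_0^\eps)(y_k)}\le\big(\max_i\norm{\nabla^{\eps,i}u_0^\eps}_{L^\infty(\T_\eps^d)}\big)\,\abs{P_{s_k}^\eps(\nabla^{\eps,j_k}v_0^\eps)(y_k)}$. It then remains to carry out the spatial and directional summations of $\prod_{\ell=1}^k q_{s_{\ell-1}-s_\ell}^{\eps,j_{\ell-1},j_\ell}(y_{\ell-1},y_\ell)$, which is exactly the telescoping computation already performed in the proof of Lemma~\ref{lemma:H1-contractivity}: by the symmetry \eqref{eq:B-symmetry} and the invariance \eqref{eq:B-invariance}, the sum over $y_0,\dots,y_{k-1}$ and $j_0,\dots,j_{k-1}$ collapses to $\prod_{\ell=1}^k\cQ_\eps(s_{\ell-1}-s_\ell)$ (with the single weight $\eps^d$, originally attached to $y_0=x$, migrating to the free index $y_k$), leaving $\eps^d\sum_{y_k\in\T_\eps^d}\sum_{j_k=1}^d\abs{P_{s_k}^\eps(\nabla^{\eps,j_k}v_0^\eps)(y_k)}\le\sum_{j_k=1}^d\norm{\nabla^{\eps,j_k}v_0^\eps}_{L^1(\T_\eps^d)}$ just as in the case $k=0$. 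Integrating over the simplex and using $\int_{[0,t]_>^k}\dd s_1\cdots\dd s_k\prod_{\ell=1}^k\cQ_\eps(s_{\ell-1}-s_\ell)\le\big(\int_0^\infty\cQ_\eps(s)\,\dd s\big)^k=2^{-k}$, which is \eqref{eq:B-12} together with the non-negativity of $\cQ_\eps$, yields the desired per-term bound.

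The argument is essentially routine once Proposition~\ref{pr:second-moments} and Lemma~\ref{lemma:Q1} are in hand, and mirrors Lemma~\ref{lemma:H1-contractivity}; the only slightly delicate point is the bookkeeping of the single $\eps^d$-weight when collapsing the $q$-chain, which one handles exactly as there (or, alternatively, by observing that $q_t^{\eps,i,j}(x,y)$ depends on $x,y$ only through $x-y$, so the normalized sum $\eps^d\sum_{\T_\eps^d}$ may be freely relocated along the chain).
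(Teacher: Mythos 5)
Your proposal is correct and follows essentially the same route as the paper: triangle inequality on the series from Proposition~\ref{pr:second-moments}, the pointwise bound on $\Pi_t^{\eps,i,0}(u_0^\eps,v_0^\eps)$ combined with $L^\infty$-contractivity and mass-preservation of $P_t^\eps$, and the chain-collapse argument borrowed from the proof of Lemma~\ref{lemma:H1-contractivity} (using \eqref{eq:B-symmetry}, \eqref{eq:B-invariance}, and \eqref{eq:B-12}) to get the factor $2^{-k}$. The paper states the $k\ge 1$ step more tersely, simply citing \eqref{eq:estimate-W11-W1oo-1} and \eqref{eq:B-12}, whereas you spell out the bookkeeping of the $\eps^d$-weight — a helpful elaboration but not a different argument.
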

\begin{proof}
	Recall \eqref{eq:Pi-eps-0}.	Since
	\begin{equation}
		\abs{\Pi_t^{\eps,i,0}(u_0^\eps,v_0^\eps)(x)}\le \norm{\nabla^{\eps,i}u_0}_{L^\infty(\T_\eps^d)} P_t^\eps\abs{\nabla^{\eps,i}v_0^\eps}(x)\comma
	\end{equation}
	we obtain
	\begin{align}
		\eps^d \sum_{x\in \T_\eps^d}\sum_{i=1}^d \abs{\Pi_t^{\eps,i,0}(u_0^\eps,v_0^\eps)(x)}&
		\le \max_{i=1,\ldots, d}\norm{\nabla^{\eps,i}u_0^\eps}_{L^\infty(\T_\eps^d)}\tonde{\eps^d\sum_{i=1}^d\sum_{x\in \T_\eps^d}
			P_t^\eps \abs{\nabla^{\eps,i}v_0^\eps}(x)}
		\\
		&= \max_{i=1,\ldots, d}\norm{\nabla^{\eps,i}u_0^\eps}_{L^\infty(\T_\eps^d)} \tonde{\sum_{i=1}^d\norm{\nabla^{\eps,i}v_0^\eps}_{L^1(\T_\eps^d)}}\comma
		\label{eq:estimate-W11-W1oo-1}
	\end{align}
	where for the last step we used the invariance of $P_t^\eps$, i.e., $\langle P_t^\eps g\rangle_\eps = \langle g\rangle_\eps$, for all $g\in \T_\eps^d$.
	Finally, recalling \eqref{eq:second-moment-mixed}, we get
	\begin{align}
		&\eps^d	\sum_{x\in \T_\eps^d}\sum_{i=1}^d	\abs{\E\big[\nabla^{\eps,i}u_t^\eps(x)\, \nabla^{\eps,i}v_t^\eps(x)\big]}
		\\
		&\qquad\le \sum_{k=0}^\infty \eps^d\sum_{x\in \T_\eps^d}\sum_{i=1}^d \abs{\Pi_t^{\eps,i,k}(u_0^\eps,v_0^\eps)(x)}\\
		&\qquad\le \max_{i=1,\ldots,k}\norm{\nabla^{\eps,i}u_0^\eps}_{L^\infty(\T_\eps^d)}\tonde{\sum_{i=1}^d \norm{\nabla^{\eps,i}v_0^\eps}_{L^1(\T_\eps^d)}}\sum_{k=0}^\infty  2^{-k}\comma
	\end{align}
	where for the last step we first  used  \eqref{eq:estimate-W11-W1oo-1} and then \eqref{eq:B-12}.		This concludes the proof of the lemma.
\end{proof}

The following estimate turns out to be useful for small times and concentrated initial conditions. 
In what follows, we let $B_\sigma(x)\subset \T^d$ denote the open ball of radius $\sigma\in (0,\frac12)$  around  $x\in \T^d$, and exploit the following classical exit-time estimate for the random walk $(X_t^\eps)_{t\ge  0}$ (see, e.g., \cite[Eq.\ (4.4)]{delloschiavo_portinale_sau_scaling_2021} and references therein): for all $t\in (0,1)$, $x\in \T_\eps^d$ and $\rho \in (0,\frac12)$, 
\begin{equation}\label{eq:exit-time}
	\sum_{\substack{y\in \T_\eps^d\\
			\abs{x-y}>\rho}}p_t^\eps(x,y) \le  \Pr_x^\eps\tonde{\sup_{s\in [0,t]}\abs{X_t^\eps-x}>\rho}\le  c_1\exp\tonde{-\frac{c_2\,\rho}{t^{1/2}\vee \eps}}\comma
\end{equation}
for some constants $c_1,c_2>0$ depending only on $d\ge 1$.
\begin{lemma}\label{lemma:short-time}
	Let the initial condition $u_0^\eps \in  \R^{\T_\eps^d}$ be vanishing outside $B_\sigma(x)$, for some $\sigma\in (0,\frac12)$ and $x\in \T_\eps^d$.	 Then, for all $t\in (0,1)$,  $\rho \in (\sigma+\eps,\frac12)$ and $n\in \N$, we have
	\begin{equation}\label{eq:small-time}
		\sum_{i=1}^d \E\big[\ttnorm{\car_{B_\rho^\complement(x)}\nabla^{\eps,i}u_t^\eps}_{L^2(\T_\eps^d)}^2\big]	\le C\norm{u_0^\eps}_{L^\infty(\T_\eps^d)}^2\eps^{-2}\bigg\{\eps^{-2}\exp\tonde{-\frac{C'(\rho-\sigma)}{n(t^{1/2}\vee \eps)}}+2^{-n}\bigg\}\comma
	\end{equation}
	for some constants $C, C'>0$ depending only on $d\ge 1$.
\end{lemma}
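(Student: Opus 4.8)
The plan is to expand the left-hand side through the iterated-integral series of Proposition~\ref{pr:second-moments},
\[
\sum_{i}\E\big[\ttnorm{\car_{B_\rho^\complement(x)}\nabla^{\eps,i}u_t^\eps}_{L^2(\T_\eps^d)}^2\big]=\sum_{k\ge 0}\eps^d\sum_{x'\in B_\rho^\complement(x)}\sum_i \Pi_t^{\eps,i,k}(u_0^\eps)(x')\comma
\]
and to split the $k$-sum at the threshold $k=n$. For the tail $k\ge n$ one drops the indicator and invokes the bound \eqref{eq:estimate-Pi-k} from the proof of Lemma~\ref{lemma:H1-contractivity}, $\eps^d\sum_{x'\in\T_\eps^d}\sum_i \Pi_t^{\eps,i,k}(u_0^\eps)(x')\le 2^{-k}\norm{u_0^\eps}_{H^1(\T_\eps^d)}^2$; since $u_0^\eps$ vanishes outside $B_\sigma(x)$ with $\sigma<\tfrac{1}{2}$, each $\nabla^{\eps,i}u_0^\eps$ is supported on at most $C_d\eps^{-d}$ sites and bounded there by $2\eps^{-1}\norm{u_0^\eps}_{L^\infty(\T_\eps^d)}$, whence $\norm{u_0^\eps}_{H^1(\T_\eps^d)}^2\le C\eps^{-2}\norm{u_0^\eps}_{L^\infty(\T_\eps^d)}^2$, and $\sum_{k\ge n}2^{-k}$ produces the $2^{-n}$-term in the claim. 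Moreover we may assume $\rho-\sigma\ge 5n\eps$: otherwise $\exp(-C'(\rho-\sigma)/(n(t^{1/2}\vee\eps)))$ is bounded below by a dimensional constant, while the left-hand side is at most $2\norm{u_0^\eps}_{H^1(\T_\eps^d)}^2\le C\eps^{-4}\norm{u_0^\eps}_{L^\infty(\T_\eps^d)}^2$ by Lemma~\ref{lemma:H1-contractivity}, so the asserted bound holds after enlarging $C$.

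For the bulk $0\le k<n$ the key point is geometric. In $\Pi_t^{\eps,i,k}(u_0^\eps)(x')$ with $x'\in B_\rho^\complement(x)$, the chain $x'=y_0,y_1,\dots,y_k$ together with the final factor $\Pi_{s_k}^{\eps,j_k,0}(u_0^\eps)(y_k)=(P_{s_k}^\eps\nabla^{\eps,j_k}u_0^\eps(y_k))^2$ — whose kernel charges only $\mathrm{supp}\,\nabla^{\eps,j_k}u_0^\eps\subset B_{\sigma+\eps}(x)$ — joins a point at distance $>\rho$ from $x$ to one at distance $<\sigma+\eps$. Writing $\delta\eqdef\rho-\sigma-\eps$ (so $\delta\ge\tfrac{4}{5}(\rho-\sigma)\ge 4n\eps$ under the assumption above), the $k+1$ \emph{legs} of this chain — the $k$ steps $y_{\ell-1}\to y_\ell$ carried by $q_{s_{\ell-1}-s_\ell}^{\eps,j_{\ell-1},j_\ell}$, and the final step $y_k\to\mathrm{supp}\,\nabla^{\eps,j_k}u_0^\eps$ carried by $p_{s_k}^\eps$ — have displacements summing to more than $\delta$, hence at least one of them exceeds $\delta/(k+1)\ge\delta/n$. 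Running a union bound over which leg is \emph{long} and enlarging the $x'$-sum to all of $\T_\eps^d$, one bounds $\eps^d\sum_{x'\in B_\rho^\complement(x)}\sum_i \Pi_t^{\eps,i,k}(u_0^\eps)(x')$ by $k+1$ terms; in each, the free legs are handled exactly as in the proof of Lemma~\ref{lemma:H1-contractivity} (via the symmetries of $q^\eps$ in Lemma~\ref{lemma:Q1}), so that each free $q$-leg contributes a factor $\cQ_\eps(\emparg)$ with $\int_0^\infty\cQ_\eps=\tfrac{1}{2}$ by \eqref{eq:B-12}, and the free $\Pi^0$-leg contributes $\eps^d\sum_{y,j}\Pi_{s_k}^{\eps,j,0}(u_0^\eps)(y)\le\norm{u_0^\eps}_{H^1(\T_\eps^d)}^2$ by \eqref{eq:estimate-Pi-0}.

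It remains to extract the exponential from the single long leg. If it is the $\Pi^0$-leg, then $\mathrm{dist}(y_k,\mathrm{supp}\,\nabla^{\eps,j_k}u_0^\eps)>\delta/n$, and Jensen's inequality $(P^\eps_{s_k}g)^2\le P^\eps_{s_k}(g^2)$ with the exit-time estimate \eqref{eq:exit-time} (and $p^\eps_{s_k}(z,w)=p^\eps_{s_k}(w,z)$) give $\eps^d\sum_{y_k,j_k}\car_{\mathrm{dist}(y_k,\mathrm{supp})>\delta/n}\,\Pi_{s_k}^{\eps,j_k,0}(u_0^\eps)(y_k)\le c_1 e^{-c_2\delta/(n(t^{1/2}\vee\eps))}\norm{u_0^\eps}_{H^1(\T_\eps^d)}^2$. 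If it is a $q$-leg, then, writing $q_s^{\eps,j,j'}(y',y)=(\eps^{-1}b_s^{\eps,j,j'}(y',y))^2$ and combining the $\ell^1$-bound $\sum_{j',y}|b_s^{\eps,j,j'}(y',y)|\le 2d$ (each heat kernel in \eqref{eq:bt-eps} summing to $1$) with the pointwise bound $|b_s^{\eps,j,j'}(y',y)|\le 2c_1\exp(-c_2\delta/(2n(t^{1/2}\vee\eps)))$ for $|y'-y|>\delta/n$ (each of the four source points in \eqref{eq:bt-eps} lying at distance $>\delta/n-2\eps\ge\delta/(2n)$ from $y$, so that \eqref{eq:exit-time} applies), one gets that the restricted mass $\sum_{j',\,|y'-y|>\delta/n}q_s^{\eps,j,j'}(y',y)$ is at most $C\eps^{-2}e^{-c\delta/(n(t^{1/2}\vee\eps))}$, uniformly — this is where the extra factor $\eps^{-2}$ enters. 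Collecting the contributions of the free legs, the long leg and the $\Pi^0$-term, each of the $k+1$ union-bound terms is at most $C\,2^{-k}\eps^{-4}\norm{u_0^\eps}_{L^\infty(\T_\eps^d)}^2\,e^{-c\delta/(n(t^{1/2}\vee\eps))}$; summing the convergent series $\sum_{k=0}^{n-1}(k+1)2^{-k}$ and recalling $\delta\ge\tfrac{4}{5}(\rho-\sigma)$ produces the $\eps^{-4}\exp(\emparg)$-term, and adding the tail bound finishes the proof. I expect the main obstacle to be precisely the bookkeeping of this bulk estimate: preserving the geometric decay $2^{-k}$ against the $(k+1)$-fold union bound, extracting from the long $q$-leg exactly one extra power $\eps^{-2}$ (which forces the $\ell^1$--$\ell^\infty$ splitting above rather than a crude supremum bound, the latter costing a spurious $\eps^{-d}$), and reducing, via the symmetries of $q^\eps$, all the free sums — including the spatial one over $x'$ — to the constants $\cQ_\eps$ and $\norm{u_0^\eps}_{H^1(\T_\eps^d)}^2$.
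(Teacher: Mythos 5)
Your argument is correct and reaches the stated bound, but by a genuinely different route than the paper. The paper localizes hierarchically: after separating the $y_k$-sum into an inner ball and its complement (defining $I_t^{\eps,k}$ and $J_t^{\eps,k}$ in \eqref{eq:I}--\eqref{eq:J}), it peels off one nested spatial shell at a time via the one-step recursion $I_t^{\eps,k}(\rho_1,\rho_2)\le\frac12 I_t^{\eps,k-1}(\rho_1+\lambda,\rho_2)+C_2\,2^{-k}\eps^{-(d+2)}t\exp(-c_2\lambda/(t^{1/2}\vee\eps))$, iterated $k\le n$ times. You instead bound the whole sum in one shot with a union bound over which of the $k+1$ legs of the chain $y_0\to y_1\to\cdots\to y_k\to\mathrm{supp}(\nabla^{\eps,j_k}u_0^\eps)$ carries the required displacement $\ge\delta/(k+1)\ge\delta/n$, then sum out the free legs exactly as in Lemma~\ref{lemma:H1-contractivity}. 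Both approaches preserve the geometric $2^{-k}$ decay (the paper's iteration degrades to $k2^{-k}$, your union bound to $(k+1)2^{-k}$, both summable), and both arrive at the same $\eps^{-4}\norm{u_0^\eps}_{L^\infty(\T_\eps^d)}^2$ prefactor in front of the exponential. The difference is how that prefactor is produced: the paper combines $\eps^{-(d+2)}$ from the $I$-iteration, $\eps^{-d}$ from the crude $J$-bound, and the $\eps^{d-2}$ from \eqref{eq:Pi-eps-k-ub}; your $\ell^1$--$\ell^\infty$ interpolation $\sum_{j_{m-1}}\sum_{|y_{m-1}-y_m|>\delta/n}q_s^{\eps,j_{m-1},j_m}(y_{m-1},y_m)\le C\eps^{-2}e^{-c\delta/(n(s^{1/2}\vee\eps))}$ isolates the whole $\eps^{-2}$ cost of the long $q$-leg in a single step and avoids introducing the auxiliary quantities $I,J,\lambda$ altogether. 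Two bookkeeping points worth spelling out when writing this up: removing a $q$-leg from the time integral leaves $k-1$ factors of $\cQ_\eps$ under $\int_{[0,t]_>^k}$, so you pick up a factor $t\le 1$ instead of a further $\tfrac12$ (the paper carries the same $t$-factor in \eqref{eq:I-ub}); and the long-$\Pi^0$-leg case requires applying Jensen \emph{before} the union bound, so that the ``final step'' $y_k\to z$ with $z\in\mathrm{supp}(\nabla^{\eps,j_k}u_0^\eps)$ is a genuine heat-kernel displacement to which \eqref{eq:exit-time} applies. Your preliminary reduction to $\rho-\sigma\ge 5n\eps$ (which absorbs the $-2\eps$ shift when passing from $|y_{m-1}-y_m|$ to the displacement of the four source points of $b_s^\eps$) plays the role that $\rho-\sigma-\eps$ plays, carried explicitly, in the paper's estimates.
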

\begin{proof}
	Recall \eqref{eq:second-moment}. Then, for every integer $n\ge 1$,   the left-hand side of \eqref{eq:small-time} reads as
	\begin{align}
		&\sum_{i=1}^d \E\big[\ttnorm{\car_{B_\rho^\complement(x)}\nabla^{\eps,i}u_t^\eps}_{L^2(\T_\eps^d)}^2\big]=\eps^d\sum_{\substack{y\in \T_\eps^d\\
				\abs{x-y}>\rho}}\sum_{i=1}^d \E\big[\tonde{\nabla^{\eps,i}u_t^\eps(y)}^2\big]\\
		&\qquad = \eps^d\sum_{\substack{y\in \T_\eps^d\\
				\abs{x-y}>\rho}}\sum_{i=1}^d \sum_{k=0}^n \Pi_t^{\eps,i,k}(u_0^\eps)(y)+\eps^d\sum_{\substack{y\in \T_\eps^d\\
				\abs{x-y}>\rho}}\sum_{i=1}^d \sum_{k=n+1}^\infty \Pi_t^{\eps,i,k}(u_0^\eps)(y)\\
		&\qquad \le 
		\eps^d\sum_{\substack{y\in \T_\eps^d\\
				\abs{x-y}>\rho}}\sum_{i=1}^d \sum_{k=0}^n \Pi_t^{\eps,i,k}(u_0^\eps)(y) + \sum_{k=n+1}^\infty\bigg(\eps^d\sum_{y\in \T_\eps^d}\sum_{i=1}^d \Pi_t^{\eps,i,k}(u_0^\eps)(y)\bigg)\\
		&\qquad \le 
		\eps^d\sum_{\substack{y\in \T_\eps^d\\
				\abs{x-y}>\rho}}\sum_{i=1}^d \sum_{k=0}^n \Pi_t^{\eps,i,k}(u_0^\eps)(y) +
		2^{-n}\norm{u_0^\eps}_{H^1(\T_\eps^d)}^2	\comma\label{eq:short-time-final}
	\end{align}
	where the last step used  \eqref{eq:estimate-Pi-k} to bound the expression in parenthesis, and $2^{-n}=\sum_{k=n+1}^\infty 2^{-k}$. 	 
	Recall \eqref{eq:Pi-eps-0}. By Jensen inequality,  ${\rm supp}(u_0^\eps)\subset B_\sigma(x)$, and the  crude bound \begin{equation}\max_{j=1,\ldots, d}\ttnorm{\nabla^{\eps,j}u_0^\eps}_{L^\infty(\T_\eps^d)}\le 2\eps^{-1}\ttnorm{u_0^\eps}_{L^\infty(\T_\eps^d)}\comma
	\end{equation} we get, for all $s\in (0,1)$,   $j=1,\ldots, d$, and $y\in \T_\eps^d$, 
	\begin{equation}\label{eq:estimate-0}
		\Pi_s^{\eps,j,0}(u_0^\eps)(y)\le \purple{4}\eps^{-2}\norm{u_0^\eps}_{L^\infty(\T_\eps^d)}^2 \sum_{\substack{z\in \T_\eps^d\\
				\abs{x-z}<\sigma+\eps}} p_s^\eps(y,z)
		\fstop
	\end{equation}
	Thanks to the exit-time estimate \eqref{eq:exit-time}, we have, for $y\in \T_\eps^d$ satisfying $\abs{x-y}>\sigma+\eps$,  
	\begin{equation}\label{eq:estimate-0-1}
		\Pi_s^{\eps,j,0}(u_0^\eps)(y)\le 2\eps^{-2}\norm{u_0^\eps}_{L^\infty(\T_\eps^d)}^2\set{c_1\exp\tonde{-\frac{c_2\tonde{\abs{x-y}-\sigma-\eps}}{s^{1/2}\vee \eps}}}\fstop
	\end{equation}
	As a consequence,
	we obtain, for  $k=0$ and for some $C_0=C_0(d)>0$, 	 
	\begin{align}\label{eq:estimate-0-2}
		\eps^d \sum_{\substack{y\in \T_\eps^d\\
				\abs{x-y}>\rho}}\sum_{i=1}^d \Pi_t^{\eps,i,0}(u_0^\eps)(y)&\le 	C_0\,\eps^{-2}\norm{u_0^\eps}_{L^\infty(\T_\eps^d)}^2 \exp\tonde{-\frac{c_2\tonde{\rho-\sigma-\eps}}{t^{1/2}\vee \eps}}\fstop
	\end{align}
	For $k=1,\ldots, n$, recalling \eqref{eq:Pi-eps-k}, we have
	\begin{align}
		&\eps^d\sum_{\substack{y\in \T_\eps^d\\
				\abs{x-y}>\rho}}\sum_{i=1}^d	\Pi_t^{\eps,i,k}(u_0^\eps)(y)\\
		&=\eps^d\int_{[0,t]_>^k}\dd s_1\cdots \dd s_k\sum_{\substack{y_0,\ldots, y_k\in \T_\eps^d\\
				\abs{y_0-x}>\rho}}\sum_{j_0,\ldots, j_k=1}^d \tonde{\prod_{\ell=1}^kq_{s_{\ell-1}-s_\ell}^{\eps,j_{\ell-1},j_\ell}(y_{\ell-1},y_\ell)}\Pi_{s_k}^{\eps,j_k,0}(u_0^\eps)(y_k)\fstop
	\end{align}
	By splitting the summation over $y_k\in \T_\eps^d$ into $\set{y_k\in \T_\eps^d: \abs{x-y_k}<\sigma+\frac{\rho-\sigma-\eps}{2n}}$ and its complement, the estimates in \eqref{eq:estimate-0} and \eqref{eq:estimate-0-1}  yield
	\begin{align}	
		\label{eq:Pi-eps-k-ub}
			&\qquad\sum_{\substack{y\in \T_\eps^d\\
					\abs{x-y}>\rho}}\sum_{i=1}^d \Pi_t^{\eps,i,k}(u_0^\eps)(y)\le 
			2\eps^{-2}\norm{u_0^\eps}_{L^\infty(\T_\eps^d)}^2\\
			&\quad\qquad\times \tonde{I_t^{\eps,k}(\sigma+\tfrac{\rho-\sigma-\eps}{2n},\rho) + c_1\exp\tonde{-\frac{c_2\tonde{\tfrac{\rho-\sigma-\eps}{2n}-\eps}}{t^{1/2}\vee \eps}} J_t^{\eps,k}(\sigma+\tfrac{\rho-\sigma-\eps}{2n},\rho)}\comma
	\end{align}
	where, for all $0\le \rho_1\le \rho_2\le\frac12$, 
	\begin{align}\label{eq:I}
		I_t^{\eps,k}(\rho_1,\rho_2)&\eqdef \int_{[0,t]_>^k}\dd s_1\cdots \dd s_k\, \sum_{\substack{y_0,\ldots, y_k\in \T_\eps^d\\
				\abs{y_0}\ge \rho_2,\,\abs{y_k}< \rho_1}}\sum_{j_0,\ldots,j_k=1}^d \bigg(\prod_{\ell=1}^k q_{s_{\ell-1}-s_\ell}^{\eps,j_{\ell-1},j_\ell}(y_{\ell-1},y_\ell)\bigg)\comma
			\\
	\qquad	J_t^{\eps,k}(\rho_1,\rho_2)&\eqdef  \int_{[0,t]_>^k}\dd s_1\cdots \dd s_k \sum_{\substack{y_0,\ldots, y_k\in \T_\eps^d\\
				\abs{y_0}\ge \rho_2,\,\abs{y_k}\ge \rho_1}}\sum_{j_0,\ldots, j_k=1}^d \bigg(\prod_{\ell=1}^k q_{s_{\ell-1},s_\ell}^{\eps,j_{\ell-1},j_\ell}(y_{\ell-1},y_\ell)\bigg)\fstop
		\label{eq:J}
	\end{align}
	Note that we used the translation invariance of the transition probabilities of $(X_t^\eps)_{t\ge 0}$ to get rid of the dependence on $x\in \T_\eps^d$. Let us extend the above definitions to the case $k=0$ as follows: 
	\begin{equation}\label{eq:I-J-k0}
		I_t^{\eps,0}(\rho_1,\rho_2)\eqdef 0\comma\qquad J_t^{\eps,0}(\rho_1,\rho_2)\eqdef 1\fstop
	\end{equation}
	We first estimate $J_t^{\eps,k}$ in \eqref{eq:J}.
	By \eqref{eq:B-12}, we readily obtain, for all $0\le\rho_1\le\rho_2\le\frac12$,
	\begin{equation}\label{eq:J-ub}
		J_t^{\eps,k}(\rho_1,\rho_2)\le J_t^{\eps,k}(0,0)\le d\,\eps^{-d}\,2^{-k}\fstop
	\end{equation}
	We now deal with $I_t^{\eps,k}$ in \eqref{eq:I}, for all $k\le n \in \N$. 
	Define
	\begin{equation}\label{eq:lambda}
		\lambda=\lambda(\rho_1,\rho_2,n)\eqdef \frac{\rho_2-\rho_1}{2n}\fstop
	\end{equation}
	Recalling the definitions \eqref{eq:bt-eps} and \eqref{eq:B} and the exit-time estimate \eqref{eq:exit-time}, we have, for all  $t\in (0,1)$ and $0\le\rho_1\le\rho_2\le\frac12$,
	\begin{equation}\label{eq:ub-int-q-rho}
		\max_{i=1,\ldots, d}\sup_{\substack{z\in \T_\eps^d\\
				\abs{x-z}\ge \rho_2}}\int_0^t \sum_{\substack{y\in \T_\eps^d\\
				\abs{x-y}<\rho_1}}\sum_{j=1}^d q_s^{\eps,i,j}(z,y)\, \dd s\le C_1\,\eps^{-2}\,t\,\exp\tonde{-\frac{c_2\tonde{\rho_2-\rho_1}}{t^{1/2}\vee \eps}}\comma
	\end{equation} 	
	for some $C_1=C_1(d)>0$. Analogously, by splitting the summation over $y_{k-1}\in \T_\eps^d$ into $\set{y_{k-1}\in \T_\eps^d:\abs{y_{k-1}}<\rho_1+\lambda}$ and its complement, we obtain
	\begin{align}
		I_t^{\eps,k}(\rho_1,\rho_2) &= \int_{[0,t]_>^{k-1}}\dd s_1\cdots \dd s_{k-1}\sum_{\substack{y_0,\ldots, y_{k-1}\in \T_\eps^d\\
				\abs{y_0}>\rho_2}}\sum_{j_0,\ldots, j_{k-1}=1}^d \tonde{\prod_{\ell=1}^{k-1}q_{s_{\ell-1}-s_\ell}^{\eps,j_{\ell-1},j_\ell}(y_{\ell-1},y_\ell)}\\
		&\qquad \qquad \times \int_0^{s_{k-1}}\sum_{\substack{y_k\in \T_\eps^d\\
				\abs{y_k}<\rho_1}}\sum_{j_k=1}^d q_{s_{k-1}-s_k}^{\eps,j_{k-1},j_k}(y_{k-1},y_k)\, \dd s_k\\
		&\le 2^{-1}\, I_t^{\eps,k-1}(\rho_1+\lambda,\rho_2)+  C_1\,\eps^{-2}\,t\, \exp\tonde{-\frac{c_2\,\lambda}{t^{1/2}\vee \eps}}J_t^{\eps,k-1}(\rho_1+\lambda,\rho_2)\\
		&\le 2^{-1}\, I_t^{\eps,k-1}(\rho_1+\lambda,\rho_2) + C_2\, 2^{-k}\,\eps^{-(d+2)}\,  t\, \exp\tonde{-\frac{c_2\,\lambda}{t^{1/2}\vee \eps}}\comma
	\end{align}
	where for the first inequality we used \eqref{eq:B-12} and \eqref{eq:ub-int-q-rho}, while for the second one we used \eqref{eq:J-ub}. Here,  $C_2=C_2(d)>0$. Finally, noting that $\rho_2-\tonde{\rho_1+n\,\lambda}>\lambda$ and recalling \eqref{eq:I-J-k0}, we may iterate the above inequality, so as to obtain
	\begin{equation}\label{eq:I-ub}
		I_t^{\eps,k}(\rho_1,\rho_2)\le C_2\, k\,2^{-k}\, \eps^{-(d+2)}\, t\, \exp\tonde{-\frac{c_2\tonde{\rho_2-\rho_1}}{2n\tonde{t^{1/2}\vee \eps}}}\comma\qquad \text{for all}\ k\le n\fstop
	\end{equation}

	By combining the estimates in \eqref{eq:estimate-0-2},  \eqref{eq:Pi-eps-k-ub}, \eqref{eq:J-ub} and \eqref{eq:I-ub}, we get, for some $C_3, C_4, C_5>0$ depending only on $d\ge 1$,
	\begin{align}
		&\sum_{k=0}^n\Bigg(	\eps^d\sum_{\substack{y\in \T_\eps^d\\
				\abs{x-y}>\rho}}\sum_{i=1}^d \Pi_t^{\eps,i,k}(u_0^\eps)(y)\Bigg)\\
		&\qquad\le  C_4\,\eps^{-2}\norm{u_0^\eps}_{L^\infty(\T_\eps^d)}^2
		\exp\tonde{-\frac{C_3\tonde{\rho-\sigma-\eps}}{2n\tonde{t^{1/2}\vee \eps}}}
		\tonde{1+
			{\eps^{-2}\, t\sum_{k=1}^n
				k\,2^{-k} + \sum_{k=1}^n 2^{-k}  }}\\
		&\qquad\le C_5\, \eps^{-4}\norm{u_0^\eps}_{L^\infty(\T_\eps^d)}^2 \exp\tonde{-\frac{C_3\,\tonde{\rho-\sigma-\eps}}{2n\tonde{t^{1/2}\vee \eps}}}\comma
	\end{align}
	where for the last step we used $t<1$. Inserting this bound into \eqref{eq:short-time-final}, and estimating $\norm{u_0^\eps}_{H^1(\T_\eps^d)}^2\le C_6\,\eps^{-2}\norm{u_0^\eps}_{L^\infty(\T_\eps^d)}^2$, for some $C_6=C_6(d)>0$, we get the desired result.
\end{proof}

\section{Proof of Theorem \ref{th:LLN}. Variance via Poincaré inequality}\label{sec:variance}
The main goal of this section is to provide a quantitative control of the right-hand side of \eqref{eq:poincare-ineq-Gamma-eps} --- thus, of the variance of $\varGamma_t^\eps(g)$ --- and prove that it vanishes as $\eps \to 0$. We fix $t> 0$,  $g= (g^i)_{i=1,\ldots, d}\in (\cC([0,\infty)\times \T^d))^d$, and the initial conditions $u_0\in \cC(\T^d)$ and $u_0^\eps = u_0\restr{\T_\eps^d}\in \R^{\T_\eps^d}$ all throughout the section. Finally,  $\norm{\emparg}_\infty$ indicates uniform norms, e.g.,  \begin{equation}\label{eq:infty-norm}
	\norm{g}_\infty\eqdef \max_{i=1,\ldots,d}\sup_{t\ge0}\sup_{x\in \T^d}\abs{g^i(t,x)}\qquad \text{and}\qquad  \norm{u_0}_\infty\eqdef \sup_{x\in \T^d}\abs{u_0(x)}\fstop
\end{equation} 	
\begin{theorem}[Variance]\label{th:var}
	Let $\varGamma_t^\eps(g)$ be given as in \eqref{eq:Gamma-eps}. Then, for all $h\in (0,1)$,
	\begin{equation}\label{eq:th-var-estimate}
		\eps^{-(d+2)}\int_0^\infty  \eps^d\sum_{x\in \T_\eps^d}\sum_{i=1}^d \E\big[\big(D_{(s,x,i)}\varGamma_t^\eps(g)\big)^2\big]\, \dd s \le C\norm{g}_\infty^2\norm{u_0}_\infty^4 (\eps^{d/2})^{\tonde{1-h}\frac{d}{d+2}}\comma
	\end{equation}
	for some $C=C(d,h)>0$.  
\end{theorem}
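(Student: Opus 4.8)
\textit{Setup.}
The starting point is the Poincar\'e reduction already recorded in \eqref{eq:poincare-ineq-Gamma-eps}: it remains to bound $\eps^{-(d+2)}\int_0^t\eps^d\sum_{x\in\T_\eps^d}\sum_{i=1}^d\E\big[\big(D_{(s,x,i)}\varGamma_t^\eps(g)\big)^2\big]\,\dd s$. Using $E_{(s,x,i)}u_r^\eps=u_r^\eps+w_r^{\eps,(s,x,i)}$ as in \eqref{eq:u+w} together with $(\nabla^{\eps,j}(u+w))^2-(\nabla^{\eps,j}u)^2=\nabla^{\eps,j}w\,(2\nabla^{\eps,j}u+\nabla^{\eps,j}w)$, I would decompose $D_{(s,x,i)}\varGamma_t^\eps(g)=2\Lambda_t^{\eps,1}(s,x,i)+\Lambda_t^{\eps,2}(s,x,i)$, with $\Lambda^{\eps,1}$ collecting the part linear in the discrepancy $w=w^{\eps,(s,x,i)}$ and $\Lambda^{\eps,2}$ the quadratic part. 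Throughout I would record and use that $w$ is an averaging process started at time $s$ from a mean-zero ``dipole'' supported on $\{x,x+\eps e_i\}$ with values $\pm\tfrac\eps2\nabla^{\eps,i}u_s^\eps(x)$, so that $\norm{w_s}_{L^\infty(\T_\eps^d)}=\tfrac\eps2|\nabla^{\eps,i}u_s^\eps(x)|$, $\norm{w_s}_{L^1(\T_\eps^d)}=\eps^{d+1}|\nabla^{\eps,i}u_s^\eps(x)|$, $\norm{w_s}_{L^2(\T_\eps^d)}^2=\tfrac{\eps^{d+2}}2(\nabla^{\eps,i}u_s^\eps(x))^2$ and $\langle w_s\rangle_\eps=0$; that, the averaging flow being pure-jump with each update a local average, all $L^p$-norms of $w_r$ and the spatial mean are non-increasing in $r$; the Aldous--Lanoue identity \eqref{eq:AL-identity} (whence $\int_r^\infty\E[\norm{w_\rho}_{H^1(\T_\eps^d)}^2\mid\cF_r^\eps]\,\dd\rho=2\norm{w_r}_{L^2(\T_\eps^d)}^2$ and $\int_0^t\E[\norm{u_s^\eps}_{H^1(\T_\eps^d)}^2]\,\dd s\le 2\norm{u_0}_\infty^2$); and the deterministic bound $|\nabla^{\eps,i}u_s^\eps(x)|\le 2\eps^{-1}\norm{u_0}_\infty$.

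\textit{The quadratic term is harmless.}
From $|\Lambda_t^{\eps,2}|\le\norm{g}_\infty\int_s^t\norm{w_r}_{H^1(\T_\eps^d)}^2\,\dd r$, writing the square as a double time integral, conditioning at the earlier time, and controlling $\E[\norm{w_r}_{H^1(\T_\eps^d)}^2\mid\cF_{r'}^\eps]$ via the ultracontractivity bound \eqref{eq:ultracontractivity-H1} and the $L^1$-contractivity of $w$ (and $\norm{w_s}_{L^1(\T_\eps^d)}^2=\eps^{2(d+1)}(\nabla^{\eps,i}u_s^\eps(x))^2$), one obtains $\E[(\Lambda_t^{\eps,2})^2]\le C\norm{g}_\infty^2\norm{u_0}_\infty^2\,\eps^{2d+2}\,\E[(\nabla^{\eps,i}u_s^\eps(x))^2]$; inserting this into the Poincar\'e bound, summing over $x,i$ (which produces $\eps^{-d}\E[\norm{u_s^\eps}_{H^1(\T_\eps^d)}^2]$), and integrating in $s$ with $\int_0^t\E[\norm{u_s^\eps}_{H^1(\T_\eps^d)}^2]\,\dd s\le 2\norm{u_0}_\infty^2$, the $\Lambda^{\eps,2}$-contribution is $O(\norm{g}_\infty^2\norm{u_0}_\infty^4\,\eps^d)$, far below the target since $d>(1-h)\tfrac{d^2}{2(d+2)}$.

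\textit{The cross term.}
This is the heart of the matter. Writing $\Lambda_t^{\eps,1}(s,x,i)=\int_s^t\Psi_r\,\dd r$ with $\Psi_r=\sum_j\eps^d\sum_y g_r^j(y)\,\nabla^{\eps,j}u_r^\eps(y)\,\nabla^{\eps,j}w_r(y)$, I would expand $\E[(\Lambda_t^{\eps,1})^2\mid\cF_s^\eps]=2\int_s^t\E\big[\Psi_{r'}\int_{r'}^t\E[\Psi_r\mid\cF_{r'}^\eps]\,\dd r\ \big|\ \cF_s^\eps\big]\,\dd r'$ --- an iterated-conditioning step that avoids any fourth-moment estimate --- and control the inner conditional correlation $\eps^d\sum_{y,j}|\E[\nabla^{\eps,j}u_r^\eps(y)\nabla^{\eps,j}w_r(y)\mid\cF_{r'}^\eps]|$, uniformly in $r\ge r'$, by Lemma \ref{lemma:W11-W1oo} applied to the pair $(u_{r'}^\eps,w_{r'})$, namely by $2(\max_k\norm{\nabla^{\eps,k}u_{r'}^\eps}_{L^\infty(\T_\eps^d)})\,V_{r'}$ with $V_{r'}\eqdef\sum_j\norm{\nabla^{\eps,j}w_{r'}}_{L^1(\T_\eps^d)}$; together with $\max_k\norm{\nabla^{\eps,k}u_{r'}^\eps}_{L^\infty(\T_\eps^d)}\le2\eps^{-1}\norm{u_0}_\infty$, the $r$-integral and $|\Psi_{r'}|$ are each controlled by (a multiple of) $\eps^{-1}\norm{g}_\infty\norm{u_0}_\infty\,V_{r'}$ (the first with an extra $(t-r')$). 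Everything thus reduces to bounding $\int_s^t(t-r')\,\E[V_{r'}^2\mid\cF_s^\eps]\,\dd r'$, i.e., to quantifying how fast the gradient of the dissolving dipole decays in $L^1$. For this I would split $V_{r'}$ using a ball $B_\rho(x)$: on $B_\rho(x)$, Cauchy--Schwarz and the volume bound give $\le C\rho^{d/2}\norm{w_{r'}}_{H^1(\T_\eps^d)}$, handled in expectation by \eqref{eq:ultracontractivity-H1}; off $B_\rho(x)$, Cauchy--Schwarz on the unit-mass torus bounds the square by $d\sum_j\E[\norm{\car_{B_\rho^\complement(x)}\nabla^{\eps,j}w_{r'}}_{L^2(\T_\eps^d)}^2\mid\cF_s^\eps]$, which is exactly what Lemma \ref{lemma:short-time} controls (with $\sigma\asymp\eps$, and small provided $r'-s$ is tiny on the scale set by the truncation level $n$ there). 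Since Lemma \ref{lemma:short-time} degrades once $r'-s$ is no longer tiny, whereas conversely \eqref{eq:ultracontractivity-H1} makes $\norm{w_{r'}}_{H^1(\T_\eps^d)}$ genuinely small precisely when $r'-s$ is not tiny (the dipole has then spread), I would split the $r'$-integral at a threshold $\delta=\delta(\eps)$, using the short-time estimate below $\delta$ and \eqref{eq:ultracontractivity-H1} above it, and finally optimize over $\delta$, the radius $\rho$, and the level $n$ (taken as suitable powers of $\eps$, up to logarithmic corrections). This optimization is lossy --- one cannot equalize all competing error terms --- and the residual is exactly the factor $(\eps^{d/2})^{(1-h)\frac{d}{d+2}}$, with $h\in(0,1)$ measuring the distance from the unattainable endpoint; combining with the $x,i$-sum and $\int_0^t\E[\norm{u_s^\eps}_{H^1(\T_\eps^d)}^2]\,\dd s\le2\norm{u_0}_\infty^2$ yields \eqref{eq:th-var-estimate}.

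\textit{Main obstacle.}
It is precisely this last step. Plain $H^1$-based a priori estimates (contractivity, Lemma \ref{lemma:H1-contractivity}, and Aldous--Lanoue alone) already give a \emph{bounded} right-hand side in \eqref{eq:th-var-estimate}, but not a vanishing one, because the total dissipation $\int_0^t\E[\norm{u_s^\eps}_{H^1(\T_\eps^d)}^2]\,\dd s$ is of order one for a non-flat profile; the decay to zero must be extracted from the finer, \emph{localized} information on how the small mean-zero dipole $w^{\eps,(s,x,i)}$ diffuses and disappears --- the role of Lemma \ref{lemma:short-time} at early times married to ultracontractivity at later times. Making these two regimes cooperate quantitatively, while squaring $D_{(s,x,i)}\varGamma_t^\eps(g)$ without access to fourth moments (hence the conditioning trick), is the delicate point, and $h$ is the price paid for their imperfect balance.
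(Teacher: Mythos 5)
Your identification of the key ingredients — the dipole $w$ and its norms, the Poincar\'e reduction, Lemma~\ref{lemma:W11-W1oo}, Lemma~\ref{lemma:short-time}, ultracontractivity, Aldous--Lanoue — is correct, and your treatment of the quadratic term matches the paper's $\varLambda_1^\eps$ (yielding the harmless $\eps^{2d+2}$ order). The gap is in the cross term. You expand the \emph{full} square $\E[(\Lambda^{\eps,1})^2\mid\cF_s^\eps]$ before any time split, apply the time-\emph{uniform} bound from Lemma~\ref{lemma:W11-W1oo} over all of $[r',t]$, and then split only the outer $r'$-integral at $s+\delta$. This is quantitatively too lossy. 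Two costs are incurred simultaneously: (a) the inner $r$-integral picks up $(t-r')\asymp 1$, not $\delta$; (b) the crude $\max_k\ttnorm{\nabla^{\eps,k}u_{r'}^\eps}_{L^\infty}\le 2\eps^{-1}\ttnorm{u_0}_\infty$ is used in \emph{both} factors, costing $\eps^{-2}$ also in the regime $r'-s>\delta$ where it is not needed. Concretely, after the outer $\eps^{-(d+2)}$ from Poincar\'e, the ball/short-time part contributes $\rho^d\eps^{-2}$ (requiring $\rho\ll\eps^{2/d}$, impossible for $d\le 2$ since $\rho$ must exceed the dipole support $\sim\eps$), and the later-time part contributes $\eps^{d-2}\delta^{-d/2}=\eps^{ad-2}$ with $\delta=\eps^{2(1-a)}$ (requiring $a>2/d$, again impossible for $d\le 2$). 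Worse, the two requirements $b<1-2/d$ and $a>2/d$ are incompatible with the Lemma~\ref{lemma:short-time} compatibility $a<b$ for all $d\le 4$, and one checks they cannot simultaneously reach the stated exponent $(\eps^{d/2})^{(1-h)d/(d+2)}$ in any dimension.

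The paper's decomposition \eqref{eq:varLambda-1-2-3} fixes exactly this: the cross term is split into $\varLambda_2^{\eps,\delta}$ (over $[s,s+\delta]$) and $\varLambda_3^{\eps,\delta}$ (over $[s+\delta,t]$) \emph{before} squaring, so that in $\varLambda_2^{\eps,\delta}$ both time variables live in $[s,s+\delta]$ and the inner integral $Z_r^{\eps,\delta}$ in \eqref{eq:Z} contributes $\delta$ rather than $t$, giving the decisive $\eps^{-2}\delta^2$ prefactor in Lemma~\ref{lemma:varLambda-2}. And for $\varLambda_3^{\eps,\delta}$ the paper does \emph{not} use iterated conditioning at all: it applies Cauchy--Schwarz at the level of $\norm{w_r}_{H^1}\norm{u_r}_{H^1}$, then Aldous--Lanoue twice, absorbing the $u$-part into the \emph{deterministic} bound $\norm{u_{s+\delta}^\eps}_{L^2(\T_\eps^d)}^2\le\norm{u_0}_\infty^2$ (no $\eps^{-2}$!), leaving only $\E[\norm{w_{s+\delta}^\eps}_{L^2(\T_\eps^d)}^2\mid\cF_s^\eps]\le C\norm{w_s^\eps}_{L^1}^2\delta^{-d/2}=C\eps^{2d+2}(\nabla^{\eps,i}u_s^\eps(x))^2\delta^{-d/2}$ via the $L^1\to L^2$ ultracontractivity \eqref{eq:ultracontractivity} and $\langle w_s^\eps\rangle_\eps=0$. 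Your approach needs to be modified at exactly this point — split the integral into $[s,s+\delta]$ and $[s+\delta,t]$ before expanding the square, and on $[s+\delta,t]$ replace the Lemma~\ref{lemma:W11-W1oo}-based conditioning by the Cauchy--Schwarz/Aldous--Lanoue/$L^1$-ultracontractivity argument.
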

We break the proof of Theorem \ref{th:var} into steps. We start by estimating a conditional version of the expectation on the left-hand side of \eqref{eq:th-var-estimate}. 
\begin{proposition}\label{pr:var}
	We have, $\P$-a.s.,  for all $s\in (0,t)$, $x\in \T_\eps^d$, $i=1,\ldots, d$, and $h\in (0,1)$,
	\begin{equation}
		\E\big[\tonde{D_{(s,x,i)}\varGamma_t^\eps(g)}^2\mid \cF_s^\eps\big]\le C\, \eps^{d+2}\norm{g}_\infty^2\norm{u_0}_\infty^2\tonde{\nabla^{\eps,i}u_s^\eps(x)}^2 (\eps^{d/2})^{\tonde{1-h}\frac{d}{d+2}}\comma
	\end{equation}
	where $C=C(d,h)>0$.
\end{proposition}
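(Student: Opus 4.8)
The plan is to compute $D_{(s,x,i)}\varGamma_t^\eps(g)$ explicitly using the discrepancy decomposition established in the excerpt. Recall that for $s\le t$,
\begin{equation}\label{eq:Dvar-expanded}
	D_{(s,x,i)}\varGamma_t^\eps(g)= \sum_{j=1}^d\int_s^t \eps^d\sum_{y\in\T_\eps^d}\set{\tonde{\nabla^{\eps,j}E_{(s,x,i)}u_r^\eps(y)}^2-\tonde{\nabla^{\eps,j}u_r^\eps(y)}^2}g_r^j(y)\,\dd r\comma
\end{equation}
and, by linearity, $E_{(s,x,i)}u_r^\eps = u_r^\eps + w_r^{\eps,(s,x,i)}$, so the bracket in \eqref{eq:Dvar-expanded} equals $2\nabla^{\eps,j}u_r^\eps(y)\,\nabla^{\eps,j}w_r^{\eps,(s,x,i)}(y)+\tonde{\nabla^{\eps,j}w_r^{\eps,(s,x,i)}(y)}^2$. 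The key observation is that the discrepancy $w_r^{\eps,(s,x,i)}$ starts at time $s$ from the configuration $E_{(s,x,i)}u_s^\eps - u_s^\eps$, which is supported on the single edge $\{x,x+\eps e_i\}$ and has magnitude $\tfrac12\abs{\nabla^{\eps,i}u_s^\eps(x)}\cdot\eps$; explicitly $w_s^{\eps,(s,x,i)}(x)=-w_s^{\eps,(s,x,i)}(x+\eps e_i)=\tfrac{\eps}{2}\nabla^{\eps,i}u_s^\eps(x)$ and zero elsewhere. Conditioning on $\cF_s^\eps$ freezes $u_s^\eps$ (hence the initial datum of $w$), and $w_r^{\eps,(s,x,i)}$ evolves for $r>s$ as an averaging process driven by the (future) Poisson clocks, independent of $\cF_s^\eps$.

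The strategy for bounding $\E[(D_{(s,x,i)}\varGamma_t^\eps(g))^2\mid\cF_s^\eps]$ is then as follows. First, bound the square of the integral in \eqref{eq:Dvar-expanded} by Cauchy--Schwarz in $(r,y)$, pulling out $\norm{g}_\infty^2$, so that the task reduces to estimating space-time integrals of $\E[\,\cdot\mid\cF_s^\eps]$ of terms like $\tonde{\nabla^{\eps,j}u_r^\eps(y)}^2\tonde{\nabla^{\eps,j}w_r^{\eps,(s,x,i)}(y)}^2$ and $\tonde{\nabla^{\eps,j}w_r^{\eps,(s,x,i)}(y)}^4$. For the first, one uses Cauchy--Schwarz in the conditional expectation together with the a priori $L^\infty$-bound $\norm{u_r^\eps}_\infty\le\norm{u_0}_\infty$ (max principle for the averaging process) to control $\tonde{\nabla^{\eps,j}u_r^\eps(y)}^2\le 4\eps^{-2}\norm{u_0}_\infty^2$; for the fourth-power term one uses the same crude gradient bound twice. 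What remains is to control $\sum_j\int_s^t\eps^d\sum_y\E[\tonde{\nabla^{\eps,j}w_r^{\eps,(s,x,i)}(y)}^2\mid\cF_s^\eps]\,\dd r$, i.e.\ the space-time $H^1$-mass of the discrepancy started from a single spike of height $\sim\eps\abs{\nabla^{\eps,i}u_s^\eps(x)}$ and support in a ball of radius $\sigma\sim\eps$. Here Lemma \ref{lemma:H1-contractivity} gives the deterministic-time bound $\E[\norm{w_r}_{H^1}^2\mid\cF_s^\eps]\le 2\norm{w_s}_{H^1}^2\lesssim\eps^{d-2}\cdot\eps^2\tonde{\nabla^{\eps,i}u_s^\eps(x)}^2=\eps^d\tonde{\nabla^{\eps,i}u_s^\eps(x)}^2$, which after integrating $r$ over $[s,t]$ contributes an $O(1)$ time factor. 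This already produces the factor $\tonde{\nabla^{\eps,i}u_s^\eps(x)}^2$; the remaining $\eps$-powers need to be collected carefully.

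The subtle point — and where I expect the real work to be — is extracting the extra gain $(\eps^{d/2})^{(1-h)\frac{d}{d+2}}$. A naive combination of the crude $\tonde{\nabla u_r^\eps}^2\le 4\eps^{-2}\norm{u_0}_\infty^2$ bound with the $H^1$-mass of $w$ would only give $\eps^{d+2}\norm{g}_\infty^2\norm{u_0}_\infty^2\tonde{\nabla^{\eps,i}u_s^\eps(x)}^2$ times an $O(1)$ factor, with no smallness. To gain the extra power one must exploit that the discrepancy $w_r^{\eps,(s,x,i)}$ stays \emph{concentrated} near $x$ for a while, so that the product $\nabla^{\eps,j}u_r^\eps(y)\cdot\nabla^{\eps,j}w_r^{\eps,(s,x,i)}(y)$ is only non-negligible on a small region, and use Lemma \ref{lemma:short-time} (the exit-time/localization estimate) to bound $\sum_j\E[\norm{\car_{B_\rho^\complement(x)}\nabla^{\eps,j}w_r^\eps}_{L^2}^2\mid\cF_s^\eps]$ by something that decays in the radius $\rho$. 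One then splits, at a time-dependent radius $\rho=\rho(\eps,h)$, the contribution from $B_\rho(x)$ (small because its volume is $\sim\rho^d$, bounding $\nabla u_r^\eps$ crudely there) against the contribution from $B_\rho^\complement(x)$ (small by Lemma \ref{lemma:short-time}, at the price of the crude $\nabla u_r^\eps$ bound); optimizing $\rho$ and the parameter $n$ in Lemma \ref{lemma:short-time} over the allowed ranges, with $h\in(0,1)$ absorbing the logarithmic losses from the exponential exit-time tails, yields exactly the exponent $(1-h)\frac{d}{d+2}$. Balancing these two error sources — volume growth $\rho^d$ versus exponential decay $\exp(-c\rho/(r^{1/2}\vee\eps))$, integrated against the $1/r$-type singularity coming from the short-time behaviour of $\nabla u_r^\eps$ near the spike — is the main obstacle, and is presumably handled in the subsequent lemmas (Lemma \ref{lemma:varLambda-2}, Lemma \ref{lemma:varLambda-3}) that the excerpt refers to.
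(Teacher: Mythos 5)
Your proposal captures the right starting point — the expansion of $D_{(s,x,i)}\varGamma_t^\eps(g)$ via the discrepancy $w_r^{\eps,(s,x,i)}$, the scaling $\norm{w_s^\eps}_{L^2(\T_\eps^d)}^2\sim\eps^{d+2}(\nabla^{\eps,i}u_s^\eps(x))^2$, and the role of localization. However, your plan for the first step has a genuine gap, and it is precisely the one the paper flags as the main difficulty. Applying Cauchy--Schwarz in $(r,y)$ to the squared integral reduces the estimate to pointwise fourth moments $\E[(\nabla^{\eps,j}u_r^\eps)^2(\nabla^{\eps,j}w_r^\eps)^2\mid\cF_s^\eps]$ and $\E[(\nabla^{\eps,j}w_r^\eps)^4\mid\cF_s^\eps]$, and the paper explicitly states (see Section \ref{sec:proof-overview-lemmas}) that such fourth moments are out of reach by the recursive technique used for second moments. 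Your fallback, the crude bound $(\nabla^{\eps,j}u_r^\eps)^2\le 4\eps^{-2}\norm{u_0}_\infty^2$, is too lossy: combined with Lemma~\ref{lemma:H1-contractivity} (giving $\int_s^t\E[\norm{w_r^\eps}_{H^1}^2\mid\cF_s^\eps]\,\dd r\lesssim\eps^d(\nabla^{\eps,i}u_s^\eps(x))^2$) and the $O(1)$ normalization factor from Cauchy--Schwarz, one only obtains $\eps^{d-2}$, not the $\eps^{d+2}$ you claim as the ``naive'' baseline. Even replacing Lemma~\ref{lemma:H1-contractivity} by Aldous--Lanoue to get $\norm{w_s^\eps}_{L^2}^2$ instead of $\norm{w_s^\eps}_{H^1}^2$ improves this only to $\eps^d$. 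So the approach does not even reach the benchmark from which the extra gain is supposed to be extracted.

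The paper avoids this by never squaring pointwise. Instead, it splits $(D_{(s,x,i)}\varGamma_t^\eps(g))^2$ into three pieces $\varLambda_1^\eps,\varLambda_2^{\eps,\delta},\varLambda_3^{\eps,\delta}$, introducing a \emph{temporal} burn-in window $[s,s+\delta]$ with $\eps^2\ll\delta\ll 1$ that your plan does not contain. For $\varLambda_1^\eps$ (the $(\nabla w)^2$ term) and $\varLambda_3^{\eps,\delta}$ (the $\nabla u\cdot\nabla w$ cross term on $[s+\delta,t]$), the square of the time integral is written as a double time integral, the tower property is applied at the intermediate time, and the inner integral of an $H^1$-norm is converted into an $L^2$-norm via Aldous--Lanoue \eqref{eq:AL-identity}; together with pathwise $L^2$-monotonicity this yields $\norm{w_s^\eps}_{L^2}^4$ and $\norm{w_{s+\delta}^\eps}_{L^2}^2\norm{u_0^\eps}_{L^\infty}^2$ respectively, \emph{without} ever touching a fourth moment. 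The extra gain in $\varLambda_3^{\eps,\delta}$ comes from ultracontractivity \eqref{eq:ultracontractivity} applied across the burn-in window, turning $\norm{w_{s+\delta}^\eps}_{L^2}^2$ into $\delta^{-d/2}\norm{w_s^\eps}_{L^1}^2\sim\delta^{-d/2}\eps^{2d+2}(\nabla^{\eps,i}u_s^\eps(x))^2$. The remaining piece $\varLambda_2^{\eps,\delta}$ (cross term on $[s,s+\delta]$) is the genuinely delicate one; there, instead of Cauchy--Schwarz, the paper uses the $L^1\times L^\infty$ pairing of Lemma~\ref{lemma:W11-W1oo}, exploiting that the spatial $L^1$-norm of $\nabla w_r^\eps$ (a highly concentrated initial datum) is much smaller than its $L^2$-norm, and then splits that $L^1$-norm by the spatial radius $\rho$ as you anticipate, feeding Lemma~\ref{lemma:short-time}. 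Your intuition about the $\rho$ versus exit-time balancing is on the right track, and your reading of the localization lemma is correct, but without the $\delta$-splitting, the Aldous--Lanoue time-integral trick, ultracontractivity, and the $L^1\times L^\infty$ pairing, the argument cannot close: the first Cauchy--Schwarz step already forfeits the needed $\eps^4$. (Also, for the record, the small-time singularity that is being smoothed out is that of $w_r^\eps$, not of $\nabla u_r^\eps$, which is uniformly bounded since $u_0\in\cC^2(\T^d)$.)
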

Before presenting the  proof of Proposition \ref{pr:var}, we use it to prove Theorem \ref{th:var}.
\begin{proof}[Proof of Theorem \ref{th:var}] The tower property and Proposition \ref{pr:var} yield the following bound on  the left-hand side of \eqref{eq:th-var-estimate}:
	\begin{align}
		&\eps^{-(d+2)}\int_0^\infty  \eps^d\sum_{x\in \T_\eps^d}\sum_{i=1}^d \E\big[\E\big[\big(D_{(s,x,i)}\varGamma_t^\eps(g)\big)^2\mid \cF_s^\eps\big]\big]\, \dd s\\
		&\qquad\le C\norm{g}_\infty^2 \norm{u_0}_\infty^2(\eps^{d/2})^{\tonde{1-h}\frac{d}{d+2}}\int_0^\infty \eps^d \sum_{x\in \T_\eps^d}\sum_{i=1}^d \E\big[\tonde{\nabla^{\eps,i}u_s^\eps(x)}^2\big]\, \dd s\fstop
	\end{align}
	The desired claim now follows by Aldous-Lanoue identity \eqref{eq:AL-identity}, which allows to write the above time integral as  
	\begin{equation}
		2\norm{u_0^\eps}_{L^2(\T_\eps^d)}^2\le 2\norm{u_0^\eps}_{L^\infty(\T_\eps^d)}^2\fstop 	\end{equation}
	This concludes the proof of the theorem. 
\end{proof}
The rest of this section is devoted to the proof of Proposition \ref{pr:var}.

\subsection{Dynamics of the discrepancy}\label{sec:discrepancy}
Let us adopt the following shorthand notation:
for all $s\in (0,t)$, $x\in \T_\eps^d$,  and $i=1,\ldots, d$, 
\begin{align}\label{eq:varLambda}
		&\qquad\varLambda^\eps_{(s,x,i)}\eqdef \tonde{D_{(s,x,i)}\varGamma_t^\eps(g)}^2\\
		& = \bigg(\int_s^t \eps^d \sum_{y\in \T_\eps^d}\sum_{j=1}^d \set{\tonde{\nabla^{\eps,j}u_r^\eps(y)+\nabla^{\eps,j}w_r^{\eps,(s,x,i)}(y)}^2-\tonde{\nabla^{\eps,j}u_r^\eps(y)}^2}g_r^j(y)\,\dd r\bigg)^2\comma
\end{align}
where we recall from \eqref{eq:u+w} that $(w_r^{\eps,(s,x,i)})_{r\ge 0}$ denotes the discrepancy created by the extra update at time $s$ at the vertices $x$ and $x+\eps e_i$. From now on, we shall drop  $(s,x,i)$ from the notation, and simply write, e.g., \begin{equation}\varLambda^\eps=\varLambda_{(s,x,i)}^\eps\qquad \text{and}\qquad w_r^\eps = w_r^{\eps,(s,x,i)}\fstop
\end{equation}

Next, we split the expression in \eqref{eq:varLambda} into three terms: for all $\delta \in (\eps^2,1)$,
\begin{align}
	\varLambda^\eps&= 	 \bigg(\int_s^t \eps^d \sum_{y\in \T_\eps^d}\sum_{j=1}^d\set{\tonde{\nabla^{\eps,j}w_r^{\eps}(y)}^2+2\,\nabla^{\eps,j}w_r^{\eps}(y)\, \nabla^{\eps,j}u_r^\eps(y)}g_r^j(y)\,\dd r\bigg)^2\\
	&\qquad\le 2\,\bigg(\int_s^t \eps^d\sum_{y\in \T_\eps^d}\sum_{j=1}^d\tonde{\nabla^{\eps,j}w_r^\eps(y)}^2 g_r^j(y)\, \dd r\bigg)^2\\
	&\qquad\qquad + 16\,\bigg(\int_s^{(s+\delta)\wedge t}\eps^d \sum_{y\in \T_\eps^d}\sum_{j=1}^d\nabla^{\eps,j}w_r^\eps(y)\, \nabla^{\eps,j}u_r^\eps(y)\, g_r^j(y)\, \dd r\bigg)^2\\
	&\qquad\qquad + 16\,\bigg(\int_{(s+\delta)\wedge t}^t\eps^d \sum_{y\in \T_\eps^d}\sum_{j=1}^d\nabla^{\eps,j}w_r^\eps(y)\, \nabla^{\eps,j}u_r^\eps(y)\, g_r^j(y)\, \dd r\bigg)^2\\
	&\qquad\defeq 2\,\varLambda_1^\eps + 16\,\varLambda_2^{\eps,\delta} + 16\,\varLambda_3^{\eps,\delta}\comma
	\label{eq:varLambda-1-2-3}
\end{align}
where we used twice the elementary inequality $(a+b)^2\le 2a^2+2b^2$, $a,b\in \R$. Our task is to  analyze the dynamics of the discrepancy $(w_r^\eps)_{r\ge s}$, bounding the conditional expectation of each of these three terms. For this purpose, let us observe that, while $w_r^\eps\equiv 0$ for $r<s$, for $r=s$ we have
\begin{equation}\label{eq:w-def}
	w_s^\eps = \frac{\eps}2\,
	\nabla^{\eps,i}u_s^\eps(x)\tonde{\car_x-\car_{x+\eps e_i}}\comma
\end{equation}
from which we obtain
\begin{equation}\label{eq:w-l2-norm}
	\norm{w_s^\eps}_{L^2(\T_\eps^d)}^2= \eps^d \sum_{y\in \T_\eps^d}\tonde{w_s^\eps(y)}^2= \frac12\,\eps^{d+2}\tonde{\nabla^{\eps,i}u_s^\eps(x)}^2\fstop
\end{equation}

\subsection{Proof strategy}\label{sec:proof-overview-lemmas}
We estimate the  three terms in \eqref{eq:varLambda-1-2-3} in three separate lemmas in the subsequent section. As an overview of the proof strategy, we observe that all three lemmas fundamentally exploit the time integrals. In this way, we avoid to estimate fourth moments of discrete gradients, for which we are not able to recover convergent recursive inequalities, as done for the second moments.

In this spirit, the first term $\varLambda_1^\eps$ is handled rather easily by squaring the time integral, applying Cauchy-Schwarz, and gain\purple{ing} $L^2$-norms of $w_s^\eps$ from the time integration and Aldous-Lanoue identity \eqref{eq:AL-identity}. The resulting upper bound depends on the discrepancy only via $\norm{w_s^\eps}_{L^2(\T_\eps^d)}^4\approx \eps^{2\tonde{d+2}}\ll\eps^{d+2}$ (see \eqref{eq:w-l2-norm}), which therefore suffices.
The third term $\varLambda_3^{\eps,\delta}$, although it contains both $w_r^\eps$ and $u_r^\eps$, is dealt with analogously. Nevertheless, if we were to take $\delta=0$ (as in $\varLambda_1^\eps$), this strategy would yield $\norm{w_s^\eps}_{L^2(\T_\eps^d)}\norm{u_s^\eps}_{L^2(\T_\eps^d)}^2\approx \eps^{d+2}$, which is too poor for our purposes. This explains the necessity of introducing a small burn-in time of size $\eps^2\ll \delta\ll \eps$, producing a regularization effect of the Dirac-like discrepancy $w_s^\eps$. This smoothening is quantified in terms of the ultracontractivity of the averaging process \eqref{eq:ultracontractivity}.

The second term $\varLambda_2^{\eps,\delta}$ concerns the remaining part of the time integral left over by $\varLambda_3^{\eps,\delta}$, namely, from time $r=s$ to $r=s+\delta$. This term is the most delicate. Instead of relying on Cauchy-Schwarz inequality, this time we must leverage  the fact that, for small times and when starting from highly concentrated data, $L^1$-norms display a better decay than $L^2$-norms. In order to turn this observation into effective bounds, we crucially employ: (i) a localization argument (Lemma \ref{lemma:short-time}, with $\delta^{1/2}\ll \rho\ll 1$); (ii) pseudo-contractivity estimates for discrete gradients (Lemmas \ref{lemma:H1-contractivity} and \ref{lemma:W11-W1oo}).

\subsection{Proofs} We split the proof  of Proposition \ref{pr:var} into three lemmas, one for each term in \eqref{eq:varLambda-1-2-3}.
We start with the first term in \eqref{eq:varLambda-1-2-3}. Here, $C_1=C_1(d)>0$.
\begin{lemma}[Estimate of $\varLambda_1^\eps$]\label{lemma:varLambda-1} We have
	\begin{equation}\E\big[\varLambda_1^\eps\mid \cF_s^\eps\big]\le C_1\, \eps^{2d+2}\norm{g}_\infty^2\norm{u_0^\eps}_{L^\infty(\T_\eps^d)}^2\tonde{\nabla^{\eps,i}u_s^\eps(x)}^2 \fstop
	\end{equation}
\end{lemma}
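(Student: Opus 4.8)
The plan is to square the time integral, apply the Cauchy–Schwarz inequality in the time variable, and then reduce the resulting space-time average of fourth powers of discrete gradients of the discrepancy $w_r^\eps$ to a quantity controlled by the $L^2$-norm of $w_s^\eps$ via Aldous–Lanoue. Concretely, starting from the definition
\begin{equation}
\varLambda_1^\eps = \bigg(\int_s^t \eps^d\sum_{y\in \T_\eps^d}\sum_{j=1}^d\tonde{\nabla^{\eps,j}w_r^\eps(y)}^2 g_r^j(y)\, \dd r\bigg)^2\comma
\end{equation}
I would bound $|g_r^j(y)|\le \norm{g}_\infty$ and pull it out, then apply Cauchy–Schwarz on $[s,t]$ (with $t\le$ some fixed horizon, or more carefully $(t-s)\le t$) to get
\begin{equation}
\varLambda_1^\eps \le \norm{g}_\infty^2\,(t-s)\int_s^t\bigg(\eps^d\sum_{y\in \T_\eps^d}\sum_{j=1}^d\tonde{\nabla^{\eps,j}w_r^\eps(y)}^2\bigg)^2\dd r = \norm{g}_\infty^2\,(t-s)\int_s^t \norm{w_r^\eps}_{H^1(\T_\eps^d)}^4\, \dd r\fstop
\end{equation}
The inner fourth power is then estimated by $\norm{w_r^\eps}_{H^1(\T_\eps^d)}^2 \le \norm{w_r^\eps}_{H^1(\T_\eps^d)}^2$ combined with the crude pointwise bound $\norm{\nabla^{\eps,j}w_r^\eps}_{L^\infty(\T_\eps^d)}\le 2\eps^{-1}\norm{w_r^\eps}_{L^\infty(\T_\eps^d)}$ (arguing, say, that the extra update at vertices $x,x+\eps e_i$ keeps the discrepancy's sup-norm controlled by that of the original process, hence by $\norm{u_0^\eps}_{L^\infty(\T_\eps^d)}$). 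This yields $\norm{w_r^\eps}_{H^1(\T_\eps^d)}^4 \le C\,\eps^{-4}\norm{u_0^\eps}_{L^\infty(\T_\eps^d)}^2\,\norm{w_r^\eps}_{H^1(\T_\eps^d)}^2$, so that after taking conditional expectation given $\cF_s^\eps$,
\begin{equation}
\E\big[\varLambda_1^\eps\mid \cF_s^\eps\big]\le C\,\norm{g}_\infty^2\,\eps^{-4}\norm{u_0^\eps}_{L^\infty(\T_\eps^d)}^2\int_s^t \E\big[\norm{w_r^\eps}_{H^1(\T_\eps^d)}^2\mid \cF_s^\eps\big]\dd r\fstop
\end{equation}

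The time integral of $\E[\norm{w_r^\eps}_{H^1(\T_\eps^d)}^2\mid\cF_s^\eps]$ is exactly where Aldous–Lanoue \eqref{eq:AL-identity} enters: applied to the discrepancy process $(w_r^\eps)_{r\ge s}$ (which is itself an averaging process started at time $s$ from $w_s^\eps$), it gives $\int_s^\infty \E[\norm{w_r^\eps}_{H^1(\T_\eps^d)}^2\mid\cF_s^\eps]\,\dd r = 2\,\E[\norm{w_s^\eps}_{L^2(\T_\eps^d)}^2\mid\cF_s^\eps] = 2\norm{w_s^\eps}_{L^2(\T_\eps^d)}^2$, since $w_s^\eps$ is $\cF_s^\eps$-measurable. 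Plugging in \eqref{eq:w-l2-norm}, namely $\norm{w_s^\eps}_{L^2(\T_\eps^d)}^2 = \tfrac12\eps^{d+2}(\nabla^{\eps,i}u_s^\eps(x))^2$, and absorbing the factor $(t-s)$ together with $\eps^{-4}\cdot\eps^{d+2}$, I obtain $\E[\varLambda_1^\eps\mid\cF_s^\eps]\le C_1\,\eps^{d-2}\norm{g}_\infty^2\norm{u_0^\eps}_{L^\infty(\T_\eps^d)}^2(\nabla^{\eps,i}u_s^\eps(x))^2$. To reach the stated power $\eps^{2d+2}$ one should instead avoid the crude $\eps^{-1}$ loss by keeping $\norm{w_r^\eps}_{H^1(\T_\eps^d)}^2$ bounded via Lemma \ref{lemma:H1-contractivity} applied to $w_r^\eps$: $\E[\norm{w_r^\eps}_{H^1(\T_\eps^d)}^2\mid\cF_s^\eps]\le 2\norm{w_s^\eps}_{H^1(\T_\eps^d)}^2$, and then $\norm{w_s^\eps}_{H^1(\T_\eps^d)}^2 \le C\eps^{-2}\norm{w_s^\eps}_{L^2(\T_\eps^d)}^2$ (equivalently, compute directly from \eqref{eq:w-def}: the $H^1$-norm of $w_s^\eps$ is of order $\eps^{d}(\nabla^{\eps,i}u_s^\eps(x))^2$ up to constants). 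One also uses that $w_r^\eps\equiv 0$ for $r<s$, so the integral is effectively over $[s,\infty)$.

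The main obstacle I anticipate is bookkeeping the exponents of $\eps$ correctly: the target bound $\eps^{2d+2}$ requires squeezing out two full factors of $\eps^{d+2}$ (morally $\norm{w_s^\eps}_{L^2}^4$, as flagged in the proof-strategy section \ref{sec:proof-overview-lemmas}), so one cannot afford the cavalier $\eps^{-1}$ gradient losses — instead the argument must pair up an $L^2$-norm of $w_s^\eps$ from Cauchy–Schwarz on the time integral with a second $L^2$-norm of $w_s^\eps$ coming from Aldous–Lanoue, using a gradient pseudo-contractivity estimate (Lemma \ref{lemma:H1-contractivity}) only to bound the remaining $\sup_r\norm{w_r^\eps}_{H^1}^2$ by $C\norm{w_s^\eps}_{H^1}^2$, and then expressing $\norm{w_s^\eps}_{H^1(\T_\eps^d)}^2$ itself through \eqref{eq:w-def} as $\Theta(\eps^{d})(\nabla^{\eps,i}u_s^\eps(x))^2$ rather than the lossy $\eps^{-2}\norm{w_s^\eps}_{L^2}^2$. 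Everything else is routine: boundedness of $g$, the elementary inequality already used to split \eqref{eq:varLambda-1-2-3}, and measurability of $w_s^\eps$ with respect to $\cF_s^\eps$.
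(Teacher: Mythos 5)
There is a genuine gap, and it is exactly at the point where you apply Cauchy--Schwarz. The paper writes the square of the time integral as an \emph{ordered double} integral,
\begin{equation}
\varLambda_1^\eps \le 2\norm{g}_\infty^2\int_s^t \norm{w_r^\eps}_{H^1(\T_\eps^d)}^2\int_r^t \norm{w_{r'}^\eps}_{H^1(\T_\eps^d)}^2\,\dd r'\,\dd r\comma
\end{equation}
whereas you use the $(t-s)$-form, which leaves $\int_s^t\norm{w_r^\eps}_{H^1}^4\,\dd r$ inside. The double-integral form is not a cosmetic choice: by conditioning first on $\cF_r^\eps$, the inner integral is killed by Aldous--Lanoue and \emph{directly} produces the factor $\norm{w_r^\eps}_{L^2(\T_\eps^d)}^2$, which is pathwise bounded by $\norm{w_s^\eps}_{L^2(\T_\eps^d)}^2$ (the $L^2$-norm is deterministically non-increasing), and then a second application of Aldous--Lanoue gives $\norm{w_s^\eps}_{L^2}^4\approx\eps^{2(d+2)}$. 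Your version never produces an $L^2$-norm from the squared integrand; the best you can do is the pathwise bound $\norm{w_r^\eps}_{H^1}^2\le C\eps^{-2}\norm{w_r^\eps}_{L^2}^2\le C\eps^{-2}\norm{w_s^\eps}_{L^2}^2$, which leaves you with $C\eps^{-2}\norm{w_s^\eps}_{L^2}^4\approx\eps^{2d+2}$, i.e.\ a final bound of order $\eps^{2d}$ rather than $\eps^{2d+2}$. That $\eps^2$ loss is fatal: in Proposition~\ref{pr:var} one needs $\eps^{-(d+2)}\E[\varLambda_1^\eps\mid\cF_s^\eps]$ to vanish, so $\eps^{2d}$ yields $V_1^\eps=\eps^{d-2}$, which diverges for $d=1$ and is only $O(1)$ for $d=2$.

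The fix you propose does not close this gap. Lemma~\ref{lemma:H1-contractivity} controls $\E[\norm{w_r^\eps}_{H^1}^2\mid\cF_s^\eps]$, a conditional \emph{second moment}; it does not give a pathwise bound on $\sup_{r}\norm{w_r^\eps}_{H^1}^2$, and indeed no such pathwise bound can hold (the $H^1$-norm of the averaging process is \emph{not} monotone along the dynamics --- only the $L^2$- and $L^\infty$-norms are). Consequently you cannot pull one factor of $\norm{w_r^\eps}_{H^1}^2$ out of the conditional expectation without paying a deterministic price, and the deterministic price is precisely the $\eps^{-2}$ above. This is the same obstruction the paper flags as ``avoiding fourth moments'': the $(t-s)$-form of Cauchy--Schwarz leaves you with a fourth-power of the random $H^1$-norm and forces you onto fourth-moment ground, whereas the ordered double integral lets you do everything with second moments via the tower property, applying Aldous--Lanoue twice.
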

\begin{proof}Since
	\begin{equation}
		\varLambda_1^\eps\le 2\norm{g}_\infty^2\int_s^t\norm{w_r^\eps}_{H^1(\T_\eps^d)}^2\int_r^t \norm{w_{r'}^\eps}_{H^1(\T_\eps^d)}^2\, \dd r'\dd r\comma
	\end{equation}
	the tower property and Aldous-Lanoue identity \eqref{eq:AL-identity} \purple{(for the first and fourth steps)} yield
	\begin{align}
		\E\big[\varLambda_1^\eps\mid \cF_s^\eps\big] &\le 2\norm{g}_\infty^2\int_s^t \E\bigg[\norm{w_r^\eps}_{H^1(\T_\eps^d)}^2 \int_r^t -2\,\frac{\dd}{\dd r'}\, \E\quadre{\norm{w_{r'}^\eps}_{L^2(\T_\eps^d)}^2\mid \cF_r^\eps}\dd r'\,\bigg|\,  \cF_s^\eps\bigg]\,\dd r\\
		&\le 4\norm{g}_\infty^2\int_s^t\E\big[\norm{w_r^\eps}_{H^1(\T_\eps^d)}^2\norm{w_r^\eps}_{L^2(\T_\eps^d)}^2\mid \cF_s^\eps\big]\,\dd r\\
		&\le 4 \norm{g}_\infty^2\norm{w_s^\eps}_{L^2(\T_\eps^d)}^2\int_s^t \E\big[\norm{w_r^\eps}_{H^1(\T_\eps^d)}^2\mid \cF_s^\eps\big]\, \dd r\\
		&\le 8\norm{g}_\infty^2 \norm{w_s^\eps}_{L^2(\T_\eps^d)}^4\comma
	\end{align}
	where the  third step used that $r\mapsto \norm{w_r^\eps}_{L^2(\T_\eps^d)}^2$ is deterministically non-increasing for  $r\ge s$. Finally, we obtain the desired inequality by \eqref{eq:w-l2-norm}, which yields
	\begin{align}
		\norm{w_s^\eps}_{L^2(\T_\eps^d)}^4&=2^{-2}\, \eps^{2(d+2)}\tonde{\nabla^{\eps,i}u_s^\eps(x)}^4\\
		&\le \eps^{2d+2}\norm{u_s^\eps}_{L^\infty(\T_\eps^d)}^2\tonde{\nabla^{\eps,i} u_s^\eps(x)}^2\\
		&\le  \eps^{2d+2}
		\norm{u_0^\eps}_{L^\infty(\T_\eps^d)}^2\tonde{\nabla^{\eps,i}u_s^\eps(x)}^2\comma
	\end{align}
	where the last step  follows by the $\P$-a.s.\ monotonicity of $L^\infty(\T_\eps^d)$-norms for the averaging process.
\end{proof}

The estimate of the second term in \eqref{eq:varLambda-1-2-3} is the most involved one. Here, $C_2, C_2'>0$ depend only on $d\ge 1$.
\begin{lemma}[Estimate of $\varLambda_2^{\eps}$]\label{lemma:varLambda-2}
	We have, for all $\delta\in (\eps^2,1)$,	 $\rho \in (4\eps,\frac12)$ and  $n\in \N$, \begin{equation}\E\big[\varLambda_2^{\eps,\delta}\mid \cF_s^\eps\big]\le 
		C_2\, T^{\eps,\delta,\rho,n} \norm{g}_\infty^2\norm{u_0^\eps}_{L^\infty(\T_\eps^d)}^2  \tonde{\nabla^{\eps,i}u_s^\eps(x)}^2\comma
	\end{equation}
	where
	\begin{equation}
		T^{\eps,\delta,\rho,n}\eqdef	\eps^{-2}\delta^2\set{\tonde{\rho \eps}^d +\eps^{-2}\exp\tonde{-C_2'\frac{\rho}{n\,\delta^{1/2}}}+2^{-n}}\fstop
	\end{equation}
	
\end{lemma}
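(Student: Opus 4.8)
\textbf{Proof plan for Lemma \ref{lemma:varLambda-2}.} The idea is to expand the square defining $\varLambda_2^{\eps,\delta}$ via Cauchy--Schwarz in the time variable, but, crucially, \emph{not} to apply Cauchy--Schwarz in the spatial sum against $u_r^\eps$; instead I want to exploit that $w_r^\eps$ starts from the highly localized datum $w_s^\eps$ in \eqref{eq:w-def}, so that $L^1$-control of its discrete gradients beats the naive $L^2$-estimate. Concretely, write
\begin{align}
	\varLambda_2^{\eps,\delta}&\le \norm{g}_\infty^2\,\delta\int_s^{(s+\delta)\wedge t}\bigg(\eps^d\sum_{y\in\T_\eps^d}\sum_{j=1}^d\abs{\nabla^{\eps,j}w_r^\eps(y)}\,\abs{\nabla^{\eps,j}u_r^\eps(y)}\bigg)^2\dd r\comma
\end{align}
and then split the inner spatial sum according to whether $y$ lies in a ball $B_\rho(x)$ around the location $x$ of the extra update or outside it. On $B_\rho(x)$ I bound $\abs{\nabla^{\eps,j}u_r^\eps(y)}\le \norm{u_0^\eps}_{L^\infty}\,\eps^{-1}$ crudely (or keep $\E[(\nabla^{\eps,j}u_r^\eps)^2]$ and use the second-moment bound from Proposition \ref{pr:second-moments}) and pay the small volume $(\rho/\eps)^d\eps^d=(\rho\eps)^d$ up to constants; on $B_\rho^\complement(x)$ I bound $\abs{\nabla^{\eps,j}w_r^\eps(y)}$ by a pointwise quantity and invoke the short-time/localization estimate of Lemma \ref{lemma:short-time} applied to the initial datum $w_s^\eps$, which is supported in $B_{\eps}(x)\subset B_\sigma(x)$ with $\sigma=\eps$, producing the exponential factor $\exp(-C'\rho/(n\delta^{1/2}))$ plus the $2^{-n}$ tail; here $\delta^{1/2}$ replaces $t^{1/2}\vee\eps$ since we integrate $r\in(s,s+\delta)$ and $\delta>\eps^2$.

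More precisely, I would first take conditional expectation given $\cF_s^\eps$ and use Cauchy--Schwarz in the expectation together with the factorization $\abs{\nabla^{\eps,j}w_r^\eps(y)}\abs{\nabla^{\eps,j}u_r^\eps(y)}$; an honest way to do this is to apply Cauchy--Schwarz in the $(y,j)$-sum \emph{only on the tail part} $B_\rho^\complement(x)$, pairing $\sum_y\car_{B_\rho^\complement(x)}(\nabla^{\eps,j}w_r^\eps(y))^2$ with $\sum_y(\nabla^{\eps,j}u_r^\eps(y))^2=\norm{u_r^\eps}_{H^1(\T_\eps^d)}^2$, and then use Lemma \ref{lemma:short-time} for the first factor (after taking $\E[\cdot\mid\cF_s^\eps]$, which by the Markov property at time $s$ means running the averaging process from $w_s^\eps$) and Lemma \ref{lemma:H1-contractivity} for the second, namely $\E[\norm{u_r^\eps}_{H^1(\T_\eps^d)}^2\mid\cF_s^\eps]\le 2\norm{u_s^\eps}_{H^1(\T_\eps^d)}^2$ — but this last bound is not yet in terms of $(\nabla^{\eps,i}u_s^\eps(x))^2$ alone. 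To get the claimed right-hand side one has to be cleverer: on the \emph{near} part $B_\rho(x)$ one pairs $\sum_y\car_{B_\rho(x)}(\nabla^{\eps,j}u_r^\eps(y))^2$ (bounded by $\norm{u_0^\eps}_{L^\infty}^2$ times the number of sites, i.e.\ $(\rho\eps)^d\eps^{-(d+2)}\norm{u_0^\eps}_\infty^2$ after the $\eps^d$ weight cancels appropriately) with $\E[\sum_y(\nabla^{\eps,j}w_r^\eps(y))^2\mid\cF_s^\eps]=\E[\norm{w_r^\eps}_{H^1(\T_\eps^d)}^2\mid\cF_s^\eps]\le 2\norm{w_s^\eps}_{H^1(\T_\eps^d)}^2$ via Lemma \ref{lemma:H1-contractivity}, and $\norm{w_s^\eps}_{H^1(\T_\eps^d)}^2\asymp\eps^{d}\,(\nabla^{\eps,i}u_s^\eps(x))^2$ directly from \eqref{eq:w-def}. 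The two pieces combine, after integrating $\dd r$ over an interval of length $\le\delta$ (giving another $\delta$, so $\delta^2$ total with the earlier $\delta$), into exactly $\eps^{-2}\delta^2\{(\rho\eps)^d+\eps^{-2}\exp(-C_2'\rho/(n\delta^{1/2}))+2^{-n}\}$ times $\norm{g}_\infty^2\norm{u_0^\eps}_{L^\infty}^2(\nabla^{\eps,i}u_s^\eps(x))^2$, where the $\eps^{-4}$ appearing in Lemma \ref{lemma:short-time} is split as $\eps^{-2}\cdot\eps^{-2}$ with one $\eps^{-2}$ absorbed into the prefactor and the other kept inside the braces.

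\textbf{Main obstacle.} The delicate point is the bookkeeping of $\eps$-powers so that the discrepancy enters \emph{only} through $\norm{w_s^\eps}_{L^2(\T_\eps^d)}^2\asymp\eps^{d+2}(\nabla^{\eps,i}u_s^\eps(x))^2$ (equivalently $\norm{w_s^\eps}_{H^1}^2\asymp\eps^{d}(\nabla^{\eps,i}u_s^\eps(x))^2$) and not through its $L^\infty$-norm, which is of size $\eps^{-1}(\nabla^{\eps,i}u_s^\eps(x))$ and would ruin the estimate. Unlike $\varLambda_1^\eps$ and $\varLambda_3^{\eps,\delta}$, a blunt Cauchy--Schwarz in space here costs $\norm{w_r^\eps}_{L^2}\norm{u_r^\eps}_{L^2}$-type factors that are one power of $\eps$ too large on the time window of length $\delta$; the resolution is precisely the ball-splitting above, using Lemma \ref{lemma:short-time}'s localization to confine the tail contribution and Lemma \ref{lemma:H1-contractivity}'s pseudo-contractivity to control the $H^1$-mass of $w_r^\eps$ on the near ball without ever touching a fourth moment. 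Keeping track of which $\eps^{-2}$ goes where, and checking the threshold conditions $\delta>\eps^2$, $\rho>4\eps$, $k\le n$ needed to invoke \eqref{eq:exit-time} and \eqref{eq:I-ub}, is the real work.
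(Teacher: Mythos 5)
Your plan bypasses the paper's key Lemma \ref{lemma:W11-W1oo} (the mixed $W^{1,1}$--$W^{1,\infty}$ cross-moment estimate), which is what the paper uses, via the tower property at the intermediate time $r$, to \emph{decorrelate} $\nabla^\eps w_{r'}^\eps$ from $\nabla^\eps u_{r'}^\eps$ and reduce everything to $L^1$-norms of $\nabla^\eps w_r^\eps$ alone; you instead attempt a direct Cauchy--Schwarz in the spatial sum $(y,j)$. That is a genuinely different route and the $\eps$-bookkeeping can be made to close, but as written there is a real gap on the tail piece $B_\rho^\complement(x)$. After your spatial Cauchy--Schwarz you are left with $\E\bigl[\ttnorm{\car_{B_\rho^\complement(x)}\nabla^\eps w_r^\eps}_{L^2(\T_\eps^d)}^2\,\norm{u_r^\eps}_{H^1(\T_\eps^d)}^2\,\big|\,\cF_s^\eps\bigr]$, and you propose to bound the two factors by Lemma \ref{lemma:short-time} and Lemma \ref{lemma:H1-contractivity} \emph{separately}. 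This is not legitimate: $w_r^\eps$ and $u_r^\eps$ are driven by the same Poisson noise after time $s$, so the product of conditional expectations does not bound the conditional expectation of the product --- bridging the two honestly would require either Cauchy--Schwarz in the conditional expectation (hence a fourth moment, which the paper deliberately avoids) or a pathwise bound on one factor. You in fact notice that even a successful decoupling would leave a stray $\norm{u_s^\eps}_{H^1(\T_\eps^d)}^2$ of the wrong form, flag it, and then simply never return to resolve it; your final ``the two pieces combine'' only actually treats the near ball.

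The fix that salvages your route (and makes it match the claimed $T^{\eps,\delta,\rho,n}$) is to bound the $u$-factor \emph{deterministically and pathwise}: $\norm{u_r^\eps}_{H^1(\T_\eps^d)}^2\le 4d\,\eps^{-2}\norm{u_0^\eps}_{L^\infty(\T_\eps^d)}^2$ from the trivial gradient bound and $L^\infty$-monotonicity, and similarly $\ttnorm{\car_{B_\rho(x)}\nabla^\eps u_r^\eps}_{L^2(\T_\eps^d)}^2\le C\rho^d\eps^{-2}\norm{u_0^\eps}_{L^\infty(\T_\eps^d)}^2$ on the near ball. Then only the $w$-factor carries a conditional expectation (Lemma \ref{lemma:H1-contractivity} near, Lemma \ref{lemma:short-time} far), and the $\eps^{-2}\norm{u_0^\eps}_{L^\infty}^2$ in $T^{\eps,\delta,\rho,n}$ appears cleanly as the pointwise bound on $\abs{\nabla^\eps u}^2$ rather than from ``splitting Lemma \ref{lemma:short-time}'s $\eps^{-4}$'' as you describe --- that $\eps^{-4}$ already cancels against $\norm{w_s^\eps}_{L^\infty}^2\asymp\eps^2(\nabla^{\eps,i}u_s^\eps(x))^2$, producing only the $\eps^{-2}$ inside the braces. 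The paper's $\eps^{-2}$ outside the braces comes from exactly the same source, $\max_j\ttnorm{\nabla^{\eps,j}u_r^\eps}_{L^\infty}\le 2\eps^{-1}\norm{u_0^\eps}_{L^\infty}$, applied once when collapsing $\varLambda_2^{\eps,\delta}$ and once inside Lemma \ref{lemma:W11-W1oo}; so with the deterministic-bound fix the two approaches end up charging the same price, just via different decorrelation mechanisms.
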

\begin{proof}
	By the tower property and  
	\begin{equation}
		\max_{j=1,\ldots, d}\ttnorm{\nabla^{\eps,j}u_r^\eps}_{L^\infty(\T_\eps^d)}\le 2\eps^{-1}\norm{u_r^\eps}_{L^\infty(\T_\eps^d)}\le 2\eps^{-1}\norm{u_0^\eps}_{L^\infty(\T_\eps^d)}\comma\qquad r\ge 0\comma\end{equation}
	we get
	\begin{equation}\label{eq:varLambda-2-1}
		\E\big[\varLambda_2^{\eps,\delta}\mid \cF_s^\eps\big]\le 4\eps^{-1}\norm{g}_\infty^2\norm{u_0^\eps}_{L^\infty(\T_\eps^d)}\int_s^{s+\delta}\sum_{j=1}^d\E\big[\norm{\nabla^{\eps,j}w_r^\eps}_{L^1(\T_\eps^d)}Z_r^{\eps,\delta}\, \big|\,\cF_s^\eps\big]\,\dd r\comma
	\end{equation}
	where
	\begin{equation}\label{eq:Z}
		Z_r^{\eps,\delta}\eqdef	\int_r^{s+\delta}
		\eps^d\sum_{y\in \T_\eps^d}\sum_{j=1}^d\abs{\E\big[\nabla^{\eps,j}w_{r'}^\eps(y)\, \nabla^{\eps,j}u_{r'}^\eps(y)\mid \cF_r^\eps\big]} \dd r'\comma\qquad r \in (s,s+\delta)\fstop
	\end{equation}
	We apply Lemma \ref{lemma:W11-W1oo} to the integrand in \eqref{eq:Z}, so as to obtain
	\begin{align}
		Z_r^{\eps,\delta}&\le 2\delta\,\big(\max_{j=1,\ldots,d}\norm{\nabla^{\eps,j}u_r^\eps}_{L^\infty(\T_\eps^d)}\big) \bigg(\sum_{j=1}^d\norm{\nabla^{\eps,j}w_r^\eps}_{L^1(\T_\eps^d)}\bigg)\\
		&\le 4\delta\, \big(\eps^{-1}\norm{u_0^\eps}_{L^\infty(\T_\eps^d)}\big)
		\bigg(\sum_{j=1}^d\norm{\nabla^{\eps,j}w_r^\eps}_{L^1(\T_\eps^d)}\bigg)\fstop
	\end{align} 
	By combining this estimate with \eqref{eq:varLambda-2-1}, we get
	\begin{equation}\label{eq:varLambda-2-ub1}
		\begin{aligned}
		&\E\big[\varLambda_2^{\eps,\delta}\mid \cF_s^\eps\big]\\
		&\qquad\le 16\,\eps^{-2}\delta \norm{g}_\infty^2\norm{u_0^\eps}_{L^\infty(\T_\eps^d)}^2\int_s^{s+\delta}\E\bigg[\bigg(\sum_{j=1}^d \norm{\nabla^{\eps,j}w_r^\eps}_{L^1(\T_\eps^d)}\bigg)^2\bigg|\,\cF_s^\eps\bigg]\,\dd r\fstop
		\end{aligned}
	\end{equation}
	We now split the $L^1$-norm above as follows: for any $\rho\in (4\eps,\frac12)$ and $j=1,\ldots, d$,
	\begin{align}
		\norm{\nabla^{\eps,j}w_r^\eps}_{L^1(\T_\eps^d)} &= \eps^d \sum_{\substack{y\in \T_\eps^d\\
				\abs{x-y}\le \rho}}\abs{\nabla^{\eps,j}w_r^\eps(y)} + \eps^d\sum_{\substack{y\in \T_\eps^d\\
				\abs{x-y}>\rho}}\abs{\nabla^{\eps,j}w_r^\eps(y)}\\
		&= \ttnorm{\car_{\bar B_\rho(x)}\nabla^{\eps,j}w_r^\eps }_{L^1(\T_\eps^d)} + \ttnorm{\car_{B_\rho^\complement(x)} \nabla^{\eps,j}w_r^\eps}_{L^1(\T_\eps^d)}\comma
	\end{align}
	from which we obtain, by  Cauchy-Schwarz inequality, 
	\begin{align}
		\bigg(\sum_{j=1}^d \norm{\nabla^{\eps,j}w_r^\eps}_{L^1(\T_\eps^d)}\bigg)^2
		&\le C_1' \sum_{j=1}^d \set{\ttnorm{\car_{\bar B_\rho(x)}\nabla^{\eps,j}w_r^\eps }_{L^1(\T_\eps^d)}^2+\ttnorm{\car_{B_\rho^\complement(x)} \nabla^{\eps,j}w_r^\eps}_{L^1(\T_\eps^d)}^2}\\
		& \le C_1'\sum_{j=1}^d \set{\ttnorm{\car_{\bar B_\rho(x)}}_{L^2(\T_\eps^d)}^2\ttnorm{\nabla^{\eps,j}w_r^\eps}_{L^2(\T_\eps^d)}^2 + \ttnorm{\car_{B_\rho^c(x)} \nabla^{\eps,j}w_r^\eps}_{L^2(\T_\eps^d)}^2}
		\\
		& \le C_2'\sum_{j=1}^d\set{\rho^d\, \ttnorm{\nabla^{\eps,j}w_r^\eps}_{L^2(\T_\eps^d)}^2 + \ttnorm{\car_{B_\rho^c(x)} \nabla^{\eps,j}w_r^\eps}_{L^2(\T_\eps^d)}^2}\\
		& = C_2'\bigg\{\rho^d \norm{w_r^\eps}_{H^1(\T_\eps^d)}^2 + \eps^d\sum_{\substack{y\in \T_\eps^d\\
				\abs{x-y}>\rho}}\sum_{j=1}^d \tonde{\nabla^{\eps,j}w_r^\eps(y)}^2\bigg\}
		\comma
	\end{align}
	for some constants  $C_1',C_2'>0$ depending only on $d\ge 1$. Taking expectations, the above estimate yields
	\begin{align}
		&\E\bigg[\bigg(\sum_{j=1}^d \norm{\nabla^{\eps,j}w_r^\eps}_{L^1(\T_\eps^d)}\bigg)^2\bigg|\,\cF_s^\eps\bigg]\\
		&\qquad\le C_2'\bigg\{\rho^d\, \E\big[\norm{w_r^\eps}_{H^1(\T_\eps^d)}^2\mid \cF_s^\eps\big] + \eps^d \sum_{\substack{y\in \T_\eps^d\\
				\abs{x-y}>\rho} }\sum_{j=1}^d	\E\big[\tonde{\nabla^{\eps,j}w_r^\eps(y)}^2\mid \cF_s^\eps\big]\bigg\}\\
		&\qquad \le C_2'\bigg\{2\rho^d \norm{w_s^\eps}_{H^1(\T_\eps^d)}^2 +  \eps^d \sum_{\substack{y\in \T_\eps^d\\
				\abs{x-y}>\rho} }\sum_{j=1}^d	\E\big[\tonde{\nabla^{\eps,j}w_r^\eps(y)}^2\mid \cF_s^\eps\big]\bigg\}\\
		&\qquad =C_2'\bigg\{2\rho^d\,c\,\eps^d\tonde{\nabla^{\eps,i}u_s^\eps(x)}^2
		+  \eps^d \sum_{\substack{y\in \T_\eps^d\\
				\abs{x-y}>\rho} }\sum_{j=1}^d	\E\big[\tonde{\nabla^{\eps,j}w_r^\eps(y)}^2\mid \cF_s^\eps\big]
		\bigg\}\comma
		\label{eq:varLambda-2-ub2}
	\end{align}
	where the second step follows from Lemma \ref{lemma:H1-contractivity}, while the third step used \eqref{eq:w-def}, which ensures that, for some $c=c(d)>0$, 
	\begin{equation}\label{eq:w-H1-norm}
		\norm{w_s^\eps}_{H^1(\T_\eps^d)}^2=\eps^d\sum_{y\in \T_\eps^d}\sum_{j=1}^d\tonde{\nabla^{\eps,j}w_s^\eps(y)}^2 = c\,\eps^d\tonde{\nabla^{\eps,i}u_s^\eps(x)}^2\fstop
	\end{equation}
	Observe that $w_s^\eps$ is non-zero only in a $2\eps$-neighborhood of $x\in \T_\eps^d$. Moreover, by \eqref{eq:w-def},
	\begin{equation}\norm{w_s^\eps}_{L^\infty(\T_\eps^d)}^2 =\frac{\eps^2}2\tonde{\nabla^{\eps,i}u_s^\eps(x)}^2\fstop
	\end{equation}
	Henceforth, Lemma \ref{lemma:short-time} applied to $(w_r^\eps)_{r\ge s}$ yields, for all $n\in \N$ and $r\in (s,s+\delta)$, 
	\begin{equation}\label{eq:varLambda-2-ub3}
		\sum_{j=1}^d \E\big[\ttnorm{\car_{B_\rho^\complement(x)}\nabla^{\eps,j}w_r^\eps}_{L^2(\T_\eps^d)}^2\mid \cF_s^\eps\big]	
		\le C\tonde{\nabla^{\eps,i}u_s^\eps(x)}^2 \set{\eps^{-2}\exp\tonde{-\frac{C'\, \rho}{n\,\delta^{1/2}}}+2^{-n}}\fstop
	\end{equation}
	
	By combining \eqref{eq:varLambda-2-ub1}, \eqref{eq:varLambda-2-ub2} and \eqref{eq:varLambda-2-ub3}, we get the desired estimate.
\end{proof}

We now bound the third term in \eqref{eq:varLambda-1-2-3}. Here, $C_3=C_3(d)>0$.
\begin{lemma}[Estimate of $\varLambda_3^{\eps,\delta}$]\label{lemma:varLambda-3}
	We have, for all $\delta\in (\eps^2,1)$, 	
	\begin{equation}
		\E\big[\varLambda_3^{\eps,\delta}\mid \cF_s^\eps\big]\le  C_3\,\eps^{2d+2}\delta^{-d/2}\norm{g}_\infty^2\norm{u_0^\eps}_{L^\infty(\T_\eps^d)}^2\tonde{\nabla^{\eps,i}u_s^\eps(x)}^2\fstop
	\end{equation}
	
\end{lemma}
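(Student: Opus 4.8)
The plan is to follow the scheme of the proof of Lemma~\ref{lemma:varLambda-1}, the new ingredient being the burn-in $\delta$, which enters through the ultracontractivity estimates \eqref{eq:ultracontractivity}--\eqref{eq:ultracontractivity-H1}. First I would apply Cauchy--Schwarz to the integrand of $\varLambda_3^{\eps,\delta}$,
\[
	\Big|\eps^d\sum_{y\in\T_\eps^d}\sum_{j=1}^d \nabla^{\eps,j}w_r^\eps(y)\,\nabla^{\eps,j}u_r^\eps(y)\,g_r^j(y)\Big|\le \norm{g}_\infty\,\norm{w_r^\eps}_{H^1(\T_\eps^d)}\,\norm{u_r^\eps}_{H^1(\T_\eps^d)}\comma
\]
so that $\varLambda_3^{\eps,\delta}\le \norm{g}_\infty^2\big(\int_{(s+\delta)\wedge t}^t\norm{w_r^\eps}_{H^1(\T_\eps^d)}\norm{u_r^\eps}_{H^1(\T_\eps^d)}\,\dd r\big)^2$, and then expand the square as a double time integral over $\{(s+\delta)\wedge t\le r\le r'\le t\}$, exactly as in Lemma~\ref{lemma:varLambda-1}. (Alternatively, one may pair $\nabla^\eps w_r^\eps$ and $\nabla^\eps u_r^\eps$ in $L^1$--$L^\infty$ and use the deterministic bound $\max_j\norm{\nabla^{\eps,j}u_r^\eps}_{L^\infty(\T_\eps^d)}\le 2\eps^{-1}\norm{u_0^\eps}_{L^\infty(\T_\eps^d)}$, as in Lemma~\ref{lemma:varLambda-2}; I expect the two routes to be interchangeable here.)

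To peel off the inner time integral I would condition on $\cF_r^\eps$ and combine Cauchy--Schwarz with the Aldous--Lanoue identity \eqref{eq:AL-identity}, applied to both $u^\eps$ and the discrepancy $w^\eps$ restarted at time $r$, obtaining $\E[\int_r^t\norm{w_{r'}^\eps}_{H^1(\T_\eps^d)}\norm{u_{r'}^\eps}_{H^1(\T_\eps^d)}\,\dd r'\mid\cF_r^\eps]\le 2\,\norm{w_r^\eps}_{L^2(\T_\eps^d)}\norm{u_r^\eps}_{L^2(\T_\eps^d)}$; the $\P$-a.s.\ monotonicity of $L^2(\T_\eps^d)$- and $L^\infty(\T_\eps^d)$-norms along the averaging flow then replaces these by $\norm{w_s^\eps}_{L^2(\T_\eps^d)}$ and by $\norm{u_0^\eps}_{L^\infty(\T_\eps^d)}$, while the crude bound $\norm{u_r^\eps}_{H^1(\T_\eps^d)}\le 2\sqrt d\,\eps^{-1}\norm{u_0^\eps}_{L^\infty(\T_\eps^d)}$ disposes of the last $u^\eps$-gradient. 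What survives is controlled by $\int_{(s+\delta)\wedge t}^t \E[\norm{w_r^\eps}_{H^1(\T_\eps^d)}^2\mid\cF_s^\eps]\,\dd r$, and this is exactly where the burn-in is essential: $w^\eps$ starts at time $s$ from the Dirac-like $w_s^\eps$ of \eqref{eq:w-def}, whose $L^1(\T_\eps^d)$-norm equals $\eps^{d+1}\,|\nabla^{\eps,i}u_s^\eps(x)|$ --- much smaller than $\norm{w_s^\eps}_{L^2(\T_\eps^d)}$. Feeding this into \eqref{eq:ultracontractivity-H1} (restarted at $s$) gives $\E[\norm{w_r^\eps}_{H^1(\T_\eps^d)}^2\mid\cF_s^\eps]\le C\,\norm{w_s^\eps}_{L^1(\T_\eps^d)}^2\,((r-s)\vee\eps^2)^{-(d/2+1)}$ for $r>s$; since $\delta>\eps^2$, integrating the time-singularity over $r\in[s+\delta,\infty)$ produces the factor $\delta^{-d/2}$, and together with $\norm{w_s^\eps}_{L^1(\T_\eps^d)}^2=\eps^{2d+2}\,(\nabla^{\eps,i}u_s^\eps(x))^2$ this yields the announced order $\eps^{2d+2}\,\delta^{-d/2}\,\norm{g}_\infty^2\,\norm{u_0^\eps}_{L^\infty(\T_\eps^d)}^2\,(\nabla^{\eps,i}u_s^\eps(x))^2$.

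The delicate point is the bilinear, squared structure $\big(\int\langle\nabla^\eps w_r^\eps,\,g_r\,\nabla^\eps u_r^\eps\rangle\,\dd r\big)^2$: one must reach the estimate without ever passing through a fourth moment of the discrete gradients of $u^\eps$, since only the second-moment (energy) bounds of Section~\ref{sec:gradient-estimates} and the ultracontractivity estimates are available. As in Lemmas~\ref{lemma:varLambda-1}--\ref{lemma:varLambda-2}, it is the time integration that rescues us: the $u^\eps$-contribution is absorbed, via \eqref{eq:AL-identity} and $L^2$-/$L^\infty$-contractivity, into the harmless factor $\norm{u_0^\eps}_{L^\infty(\T_\eps^d)}^2$, while all the smallness is extracted from the $w^\eps$-contribution, where ultracontractivity turns the $L^1$-concentration of the initial discrepancy into the gain $\delta^{-d/2}$. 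The care needed is in distributing the square between these two contributions (and in tracking how the weight is split across the $\dd r$- and the $\dd\P$-integrations) so that no spurious power of $\eps^{-1}$ from the $u^\eps$-gradient survives. Note finally that taking $\delta=0$ (as in Lemma~\ref{lemma:varLambda-1}) would only make available $\norm{w_s^\eps}_{L^2(\T_\eps^d)}^2=\tfrac12\eps^{d+2}(\nabla^{\eps,i}u_s^\eps(x))^2$ in place of $\norm{w_s^\eps}_{L^1(\T_\eps^d)}^2\delta^{-d/2}$, producing a bound of order $\eps^{d+2}$ --- too weak once integrated against the time integral of $\E[(\nabla^{\eps,i}u_s^\eps(x))^2]$ in the proof of Proposition~\ref{pr:var}.
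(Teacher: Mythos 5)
Your strategy correctly identifies the two essential ideas --- all smallness must come from the discrepancy $w^\eps$, and the burn-in $\delta$ lets ultracontractivity upgrade $L^1$-concentration of $w_s^\eps$ into a time-singularity $\delta^{-d/2}$ --- but the concrete bookkeeping you propose does not produce the claimed order $\eps^{2d+2}\delta^{-d/2}$, and in fact is not strong enough to close the proof of Proposition~\ref{pr:var} in low dimensions.

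The problem is the step \textquotedblleft the crude bound $\norm{u_r^\eps}_{H^1(\T_\eps^d)}\le 2\sqrt d\,\eps^{-1}\norm{u_0^\eps}_{L^\infty(\T_\eps^d)}$ disposes of the last $u^\eps$-gradient.\textquotedblright\ After conditioning on $\cF_r^\eps$ and applying Aldous--Lanoue to the inner integral you are left with the outer integrand $\norm{w_r^\eps}_{H^1}\norm{u_r^\eps}_{H^1}$ under the $\cF_s^\eps$-conditional expectation and time integral. Pulling out $\norm{u_r^\eps}_{H^1}$ by the crude $\eps^{-1}$ bound turns this into $\eps^{-1}\norm{u_0^\eps}_{L^\infty}\int\E\big[\norm{w_r^\eps}_{H^1}\mid\cF_s^\eps\big]\dd r$; this is a first moment, so Cauchy--Schwarz only gives the square root of $\int\E[\norm{w_r^\eps}_{H^1}^2\mid\cF_s^\eps]\dd r$. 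Tracking powers, your chain yields an overall order of $\eps^{-1}\cdot\norm{w_s^\eps}_{L^2}\cdot\norm{w_s^\eps}_{L^1}\cdot\delta^{-d/4}\sim\eps^{3d/2+1}\delta^{-d/4}$, not $\norm{w_s^\eps}_{L^1}^2\,\delta^{-d/2}\sim\eps^{2d+2}\delta^{-d/2}$; the ratio of the two is $\eps^{-(d/2+1)}\delta^{d/4}$, which with the choice $\delta\sim\eps^{2(1-a)}$ of Proposition~\ref{pr:var} diverges for every $a\in(0,1)$. Worse, the resulting $V_3^\eps$ in Proposition~\ref{pr:var} becomes $\eps^{-1+ad/2}$, which tends to $+\infty$ for every admissible $a=(1-h)\tfrac{d}{2(d+2)}$ unless $d\ge 6$. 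You flag, at the end, exactly the danger of a spurious $\eps^{-1}$ from the $u^\eps$-gradient surviving; your own steps realize that danger.

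The paper avoids this by never invoking a pointwise bound on a gradient of $u^\eps$. It conditions on $\cF_{s+\delta}^\eps$ (not $\cF_r^\eps$), applies Cauchy--Schwarz three times --- on the spatial sum, on the time integral (so $(\int\norm{w_r^\eps}_{H^1}\norm{u_r^\eps}_{H^1}\dd r)^2\le(\int\norm{w_r^\eps}_{H^1}^2\dd r)(\int\norm{u_r^\eps}_{H^1}^2\dd r)$), and on the conditional expectation --- and then treats the two resulting squared time integrals \emph{symmetrically} via the argument of Lemma~\ref{lemma:varLambda-1}, producing $8\norm{w_{s+\delta}^\eps}_{L^2}^2\norm{u_{s+\delta}^\eps}_{L^2}^2$. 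The $u$-factor is then bounded by $\norm{u_0^\eps}_{L^\infty}^2$ with no $\eps^{-1}$, and the $w$-factor is controlled by the tower property and the $L^1$--$L^2$ ultracontractivity \eqref{eq:ultracontractivity} (not the $H^1$ version \eqref{eq:ultracontractivity-H1}), giving $\E[\norm{w_{s+\delta}^\eps}_{L^2}^2\mid\cF_s^\eps]\le C\norm{w_s^\eps}_{L^1}^2\delta^{-d/2}$ because $\langle w_s^\eps\rangle_\eps=0$; this is the precise source of the factor $\eps^{2d+2}\delta^{-d/2}$. The key difference is thus not the burn-in (you use it correctly) but the order of the Cauchy--Schwarz inequalities: splitting the time integral as a product of two integrals of \emph{squared} norms is what lets Aldous--Lanoue absorb both $H^1$-factors without any gradient bound ever introducing $\eps^{-1}$.
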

\begin{proof}
	Since $\varLambda_3^{\eps,\delta}=0$ whenever $s+\delta \ge t$, let us assume  $s+\delta <t$. Then, by repeatedly applying Cauchy-Schwarz inequality, we get
	\begin{align}
		&\E\big[\varLambda_3^{\eps,\delta}\mid \cF_{s+\delta}^\eps\big]\\
		& \le \norm{g}_\infty^2\E\bigg[\bigg(\int_{s+\delta}^t\norm{w_r^\eps}_{H^1(\T_\eps^d)}\norm{u_r^\eps}_{H^1(\T_\eps^d)}\dd r\bigg)^2\bigg|\, \cF_{s+\delta}^\eps\bigg]\\
		&\le \norm{g}_\infty^2 \E\bigg[\bigg(\int_{s+\delta}^t \norm{w_r^\eps}_{H^1(\T_\eps^d)}^2\dd r\bigg)\bigg(\int_{s+\delta}^t\norm{u_r^\eps}_{H^1(\T_\eps^d)}^2\dd r\bigg)\bigg|\, \cF_{s+\delta}^\eps\bigg]\\
		&\le \norm{g}_\infty^2\E\bigg[\bigg(\int_{s+\delta}^t\norm{w_r^\eps}_{H^1(\T_\eps^d)}^2\bigg)^2\bigg|\,\cF_{s+\delta}^\eps\bigg]^\frac12\E\bigg[\bigg(\int_{s+\delta}^t\norm{u_r^\eps}_{H^1(\T_\eps^d)}^2\dd r\bigg)^2\bigg|\, \cF_{s+\delta}^\eps\bigg]^{\purple{\frac12}}\fstop
	\end{align}
	By estimating both expectations as already done in the proof of Lemma \ref{lemma:varLambda-1}, we  get
	\begin{align}
		\E\big[\varLambda_3^{\eps,\delta}\mid \cF_{s+\delta}^\eps\big]&\le 8\norm{g}_\infty^2 \norm{w_{s+\delta}^\eps}_{L^2(\T_\eps^d)}^2\norm{u_{s+\delta}^\eps}_{L^2(\T_\eps^d)}^2\\
		&\le 8\norm{g}_\infty^2 \norm{w_{s+\delta}^\eps}_{L^2(\T_\eps^d)}^2\norm{u_0^\eps}_{L^\infty(\T_\eps^d)}^2\fstop
	\end{align}
	By the tower property, we obtain
	\begin{equation}
		\E\big[\varLambda_3^{\eps,\delta}\mid \cF_s^\eps\big] = \E\big[\E\big[\varLambda_3^{\eps,\delta}\mid \cF_{s+\delta}^\eps\big]\mid \cF_s^\eps\big]\le 8\norm{g}_\infty^2\norm{u_0^\eps}_{L^\infty(\T_\eps^d)}^2\E\big[\norm{w_{s+\delta}^\eps}_{L^2(\T_\eps^d)}^2\mid \cF_s^\eps\big]\fstop
	\end{equation}
	Since $\langle w_s^\eps\rangle_\eps=0$ (cf.\ \eqref{eq:w-def}), the last expectation may be further estimated thanks to \eqref{eq:ultracontractivity}:
	\begin{equation}
		\E\big[\norm{w_{s+\delta}^\eps}_{L^2(\T_\eps^d)}^2\mid \cF_s^\eps\big]\le C\norm{w_s^\eps}_{L^1(\T_\eps^d)}^2\delta^{-d/2}\fstop
	\end{equation}
	Recalling \eqref{eq:w-def}, we have \begin{equation}\norm{w_s^\eps}_{L^1(\T_\eps^d)}^2= \eps^{2d+2}\tonde{\nabla^{\eps,i}u_s^\eps(x)}^2\comma
	\end{equation}
	and, thus, the desired result.	
\end{proof}

We conclude this section with the proof of Proposition \ref{pr:var}.
\begin{proof}[Proof of Proposition \ref{pr:var}]
	We  exhibit a choice of $\delta=\delta(\eps)\in (\eps^2,1)$ yielding
	\begin{equation}\label{eq:inequality-ell-1-2-3}
		\E\big[\varLambda_\ell^{\eps,\delta}\mid \cF_s^\eps\big]\le C\, \eps^{d+2}\norm{g}_\infty^2\norm{u_0^\eps}_{L^\infty(\T_\eps^d)}^2\tonde{\nabla^{\eps,i}u_s^\eps(x)}^2\, V_\ell^\eps\comma\qquad \ell \in \set{1,2,3}\comma
	\end{equation}
	for some  $V_\ell^\eps\le C' (\eps^{d/2})^{\tonde{1-h}\frac{d}{d+2}}$, for all $h\in (0,1)$ and some $C'=C'(d,h)>0$. The inequality in \eqref{eq:varLambda-1-2-3} would conclude the proof of the proposition.

	For what concerns $\ell=1$, we have $\varLambda_1^{\eps,\delta}=\varLambda_1^\eps$ and, by Lemma \ref{lemma:varLambda-1}, we obtain \eqref{eq:inequality-ell-1-2-3} with $V_1^\eps=\eps^{-(d+2)}\eps^{2d+2}=\eps^d$.	
	Consider the case $\ell=3$. Letting $\delta=\eps^{2\tonde{1-a}}$, $a\in (0,1)$, Lemma \ref{lemma:varLambda-3} yields \eqref{eq:inequality-ell-1-2-3} with
	\begin{equation}
		V_3^\eps=\eps^{-(d+2)}\ttonde{\eps^{2d+2}\delta^{-d/2}}= \eps^{-d-2+2d+2-d+ad}= \eps^{ad}\fstop
	\end{equation}
	By setting $a=\tonde{1-h}\frac12\frac{d}{d+2}$ for some small $h\in (0,1)$, we obtain the desired claim for $\ell=3$.
	Consider $\ell=2$, and let $\rho=\eps^{1-b}$, for $b=\frac12\frac{d}{d+2}\in (a,1)$, as well as $n=\lceil| \log\eps|^2\rceil$. Then, by inserting these choices into the claim of Lemma \ref{lemma:varLambda-2}, we obtain  
	\begin{align}
		V_2^\eps &= \eps^{-(d+2)}\, \eps^{-2+4-4a}\set{\eps^{2d-bd}+\eps^{-2}\exp\tonde{-C_2'\frac{\eps^{-\tonde{b-a}}}{\log^2\abs{\eps}}}+2^{-\log^2\abs{\eps}}}\\
		&= \eps^{-4a+d-bd} + \eps^{-d-2-4a}\exp\tonde{-C_2'\frac{\eps^{-\tonde{b-a}}}{\lceil|\log\eps|^2\rceil}} + 2^{-\lceil\purple{|}\log\eps|^2\rceil}\fstop
		\label{eq:last}
	\end{align}
	Since $b=\frac12\frac{d}{d+2}$ and $a=\tonde{1-h}b$, the first term in \eqref{eq:last} is smaller than $(\eps^{d/2})^{\frac{d}{d+2}}\le V_3^\eps$.	
	Observe that the choices $b>a$ and $n= \lceil\purple{|}\log\eps|^2\rceil$ ensure  that  the second and third terms in \eqref{eq:last} are bounded above, uniformly over $\eps\in (0,1)$,  by, e.g.,   $C''V_1^\eps=C'' \eps^d$,  for some $C''=C''(d,h)>0$. This proves \eqref{eq:inequality-ell-1-2-3} for $\ell=2$ and, thus, concludes the proof.
\end{proof}

\section{Proof of Theorem \ref{th:LLN}. Mean}\label{sec:mean}
In view of Theorem \ref{th:var} on the variance of $\varGamma_t^\eps(g)$, the proof of Theorem \ref{th:LLN} is complete as soon as we show   convergence of the corresponding means. This fact is summarized in the following theorem. Also in this section, we fix a test function $g=(g^i)_{i=1,\ldots, d}\in (\cC([0,\infty)\times \T^d))^d$, as well as the initial condition $u_0\in \cC^2(\T^d)$, and consider $u_0^\eps\eqdef u_0\restr{\T_\eps^d}$. 
\begin{theorem}[Mean]\label{th:mean}
	Let $\varGamma_t^\eps(g)$ and $\varGamma_t(g)$ be given as in \eqref{eq:Gamma-eps} and \eqref{eq:Gamma}, respectively. Then, 	 for all $t> 0$, we have
	$
		\E[\varGamma_t^\eps(g)] \xrightarrow{\eps\to 0}\varGamma_t(g)$.
	
\end{theorem}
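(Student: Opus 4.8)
The plan is to expand, via the absolutely convergent series of Proposition~\ref{pr:second-moments},
$\E\big[(\nabla^{\eps,i}u_s^\eps(x))^2\big]=\sum_{k\ge 0}\Pi_s^{\eps,i,k}(u_0^\eps)(x)$, and to treat each chain length $k$ separately. The geometric bound \eqref{eq:estimate-Pi-k}, $\eps^d\sum_{x\in\T_\eps^d}\sum_{i=1}^d\Pi_s^{\eps,i,k}(u_0^\eps)(x)\le 2^{-k}\norm{u_0^\eps}_{H^1(\T_\eps^d)}^2$, together with $\sup_\eps\norm{u_0^\eps}_{H^1(\T_\eps^d)}^2<\infty$ (since $u_0\in\cC^2(\T^d)$, so $\nabla^{\eps,i}u_0^\eps\to\nabla^i u_0$ uniformly), bounds the tail $\sum_{k>K}$ of $\E[\varGamma_t^\eps(g)]$ by $C\norm{g}_\infty 2^{-K}$ uniformly in $\eps$; the candidate limit will obey the same bound. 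Hence it suffices to show, for every fixed $k$, that
\[
\sum_{i=1}^d\int_0^t\eps^d\!\!\sum_{x\in\T_\eps^d}\!\Pi_s^{\eps,i,k}(u_0^\eps)(x)\,g_s^i(x)\,\dd s\ \xrightarrow{\eps\to 0}\ \sum_{i,j=1}^d(A^k)_{ij}\int_0^t\!\!\int_{\T^d}\!\big(\nabla^j u_s(x)\big)^2 g_s^i(x)\,\dd x\,\dd s
\]
for a suitable entrywise nonnegative $d\times d$ matrix $A=(a_{ij})$ with all row sums equal to $\tfrac12$; summing over $k$ then produces \eqref{eq:Gamma}, provided $(I-A)^{-1}=\sum_{k\ge0}A^k$ has entries $(1+\mathfrak a)\car_{i=j}+\tfrac{1-\mathfrak a}{d-1}\car_{i\ne j}$.

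For $k=0$ I would use \eqref{eq:Pi-eps-0}: the discrete heat flow $P_s^\eps$ converges to $e^{\frac s2\Delta}$ uniformly for $s\in[0,t]$, and $e^{\frac s2\Delta}\nabla^i u_0=\nabla^i u_s$, so $P_s^\eps(\nabla^{\eps,i}u_0^\eps)\to\nabla^i u_s$ uniformly on $[0,t]\times\T^d$; a Riemann-sum argument then gives $\sum_i\int_0^t\eps^d\sum_x\Pi_s^{\eps,i,0}(u_0^\eps)(x)g_s^i(x)\,\dd s\to\sum_i\int_0^t\int_{\T^d}(\nabla^i u_s)^2 g_s^i\,\dd x\,\dd s$, i.e. the claim with $A^0=I$.

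For $k\ge1$ the core is a \emph{collapse} statement for the kernels $q^\eps$. Set $a_{ij}^\eps\eqdef\sum_{y\in\T_\eps^d}\int_0^\infty q_t^{\eps,i,j}(x,y)\,\dd t$, which by translation invariance is independent of $x$ and, by the coordinate permutation/reflection symmetries of $\T_\eps^d$, equals $\alpha^\eps\car_{i=j}+\beta^\eps\car_{i\ne j}$ with $\alpha^\eps,\beta^\eps\ge 0$ and $\alpha^\eps+(d-1)\beta^\eps=\int_0^\infty\cQ_\eps(t)\,\dd t=\tfrac12$ by Lemma~\ref{lemma:Q1}. A local limit theorem for $(X_t^\eps)_{t\ge0}$ under the diffusive rescaling $t=\eps^2\tau$, $y=x+\eps z$ shows the kernels $q^\eps$ converge, after the natural normalization, to their $\Z^d$ counterparts; Gaussian-type upper bounds on $p_t^\eps$ and its discrete gradients supply an $\eps$-uniform $L^1(\dd\tau)$ majorant, so the $O(1)$-time part of $\cQ_\eps$ carries vanishing mass and $a_{ij}^\eps\to a_{ij}=\alpha\car_{i=j}+\beta\car_{i\ne j}$ with $\alpha+(d-1)\beta=\tfrac12$. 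The collapse lemma then reads: if $(\phi_s^\eps)$ is bounded uniformly in $\eps$ and converges uniformly on $[0,T]\times\T^d$ to a uniformly continuous $\phi$, then $\sum_{y\in\T_\eps^d}\int_0^t q_{t-s}^{\eps,i,j}(x,y)\,\phi_s^\eps(y)\,\dd s\to a_{ij}\,\phi_t(x)$ uniformly in $(t,x)$ — after subtracting the frozen value $\phi_t(x)$, the remainder is controlled because the mass of $q^\eps$ concentrates on time-gaps of order $\eps^2$ and space-gaps of order $\eps$, where $\phi$ varies by at most its modulus of continuity, while the contributions of $O(1)$ time-gaps and of large spatial gaps vanish by the tail estimate and \eqref{eq:exit-time}. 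Peeling off the outermost variable in \eqref{eq:Pi-eps-k} yields the recursion $\Pi_t^{\eps,i,k}(u_0^\eps)(x)=\sum_{j=1}^d\sum_{y\in\T_\eps^d}\int_0^t q_{t-s}^{\eps,i,j}(x,y)\,\Pi_s^{\eps,j,k-1}(u_0^\eps)(y)\,\dd s$, so induction on $k$ (base case $k=0$ above, where $\Pi_s^{\eps,j,0}\to(\nabla^j u_s)^2$ is uniformly continuous) plus the collapse lemma gives $\Pi_t^{\eps,i,k}(u_0^\eps)(x)\to\sum_{j=1}^d(A^k)_{ij}(\nabla^j u_t(x))^2$ uniformly on $[0,T]\times\T^d$, hence the displayed $k$-th limit. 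Finally $2A$ is doubly stochastic, so $\sum_{k\ge0}A^k=(I-A)^{-1}$ converges and $\|A^k\|_{\infty\to\infty}=2^{-k}$ (the bound used for the limiting tail); writing $A=\alpha I+\beta(J-I)$ with $J$ the all-ones matrix, $(I-A)^{-1}$ has the same form and a short computation identifies its entries as $(1+\mathfrak a)\car_{i=j}+\tfrac{1-\mathfrak a}{d-1}\car_{i\ne j}$ for a constant $\mathfrak a=\mathfrak a(d)$ one checks to lie in $(0,1]$ (e.g. $A=(\tfrac12)$, $\mathfrak a=1$ for $d=1$) — the constant of Theorem~\ref{th:flu} and Remark~\ref{remark:constant-a}. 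Collecting the $k$-th limits reproduces $\varGamma_t(g)$ as in \eqref{eq:Gamma}.

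The main obstacle will be the collapse lemma: showing, uniformly in the base point and in a form that iterates cleanly over the $k$ levels and stays summable in $k$, that the iterated $q^\eps$-chains concentrate on the space-time diagonal with the correct aggregate weight $a_{ij}$, while simultaneously controlling the $\eps^2$-scale behaviour near zero time-gap, the accumulated moduli of continuity of the limiting sources, the genuinely $O(1)$-time-gap tail of $q^\eps$, and the convergence $a_{ij}^\eps\to a_{ij}$ (in particular, that no mass escapes as $\tau\to0$ or $\tau\to\infty$, so $\sum_j a_{ij}=\tfrac12$ passes to the limit). The exit-time and heat-kernel bounds behind \eqref{eq:exit-time} and the geometric estimate \eqref{eq:estimate-Pi-k} are precisely what make these errors vanish in $\eps$ and remain summable in $k$; notably, only second moments of gradients are needed.
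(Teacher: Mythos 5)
Your proposal follows essentially the same route as the paper's: expand $\E[(\nabla^{\eps,i}u_s^\eps(x))^2]$ via the chaos-type series of Proposition~\ref{pr:second-moments}, control the $k$-tail by the geometric estimate \eqref{eq:estimate-Pi-k}, "collapse" the iterated $q^\eps$-chains onto the space-time diagonal using concentration of $\cQ_\eps$ at time scale $\eps^2$ (the role of Lemma~\ref{lemma:Q3}) and spatial localization via \eqref{eq:exit-time}, and then identify the resummed matrix $\sum_k A^k=(I-A)^{-1}$ with $A=\alpha I + \beta(J-I)$, $\alpha+(d-1)\beta=\tfrac12$, which is exactly the paper's $M_\eps$ computation in the proof of Proposition~\ref{pr:mean-1} reduced to the two-state $\car_{i=j}$ structure. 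Your final formula for $(I-A)^{-1}$ agrees with \eqref{eq:candidate-a} and the coefficient in \eqref{eq:Gamma}.

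Two presentational differences are worth noting. First, where you organize the collapse as an induction on $k$ through a one-layer lemma, the paper splits the argument differently: Proposition~\ref{pr:mean-2} replaces the innermost source $\Pi^{\eps,j_k,0}_{s_k}(u_0^\eps)(y_k)$ by its frozen value $\Pi^{\eps,j_k,0}_t(u_0^\eps)(x)$ uniformly over all $k$ at once (controlling the cumulative drift of the chain through $k$ successive splittings, with errors summable thanks to the explicit factor $2^{-k}$ in bounds like \eqref{eq:L-t} and \eqref{eq:I-ub}), while Proposition~\ref{pr:mean-1} then handles the remaining pure-time-integral chains $R^{\eps,i,j,k}_t$. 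Your $K$-truncation argument is a legitimate alternative and avoids tracking $k$-uniform error constants, at the cost of a diagonal $\eps$--$K$ limit at the end. Second, to show $a^\eps_{ij}\to a_{ij}$ (i.e.\ $\mathfrak b_\eps\to\mathfrak b$, $\mathfrak c_\eps\to\mathfrak c$) you invoke a local limit theorem under $t=\eps^2\tau$, $y=x+\eps z$, together with Gaussian-type bounds on $p^\eps_t$ and its discrete gradients to dominate; the paper instead proves Lemma~\ref{lemma:Q2} by an exact Fourier identity reducing $\mathfrak c_\eps$ to a Riemann sum of \eqref{eq:c-explicit}, which is more elementary and also gives the strict positivity $\mathfrak c>0$ needed so that $\mathfrak a<1$ for $d\ge 2$. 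Your route is plausible but would need an explicit argument for the positive lower bound on the limit.

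One small imprecision: your collapse lemma, as phrased, asserts uniform convergence in $(t,x)$ over all of $[0,T]\times\T^d$. That cannot hold at $t=0$ (the left side vanishes identically while $a_{ij}\phi_0(x)$ need not), so the inductive hypothesis at levels $k\ge 1$ should be weakened to uniform convergence on $[\delta,T]\times\T^d$ for each $\delta>0$ together with uniform boundedness. This is easily absorbed because the small-time part of the outer integral is $O(\delta)$ by Lemma~\ref{lemma:H1-contractivity}, and matches the paper's own phrasing, which is pointwise in $t>0$ followed by dominated convergence over $s\in(0,t)$. With that adjustment the argument is sound.
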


We break the proof of Theorem \ref{th:mean} into  two main steps: Propositions \ref{pr:mean-2} and  \ref{pr:mean-1}. We remark that, while the proof of Theorem \ref{th:var} was crucially exploiting the time integration in the definition of $\varGamma_t^\eps(g)$, this time we establish a pointwise convergence in both time and space variables. Here, the series representation in terms of $\Pi_t^{\eps,i,k}(u_0^\eps)$ for expectations of  squared gradients (Proposition \ref{pr:second-moments}) plays a prominent role. For such expressions, we establish two claims. First, we introduce an approximation $\tilde \Pi_t^{\eps,i,k}(u_0^\eps)$ of $\Pi_t^{\eps,i,k}(u_0^\eps)$ (Proposition \ref{pr:mean-2}). Then, we prove a limit theorem for these approximations (Proposition \ref{pr:mean-1}). 
\begin{proposition}\label{pr:mean-2} Recall  \eqref{eq:Pi-eps-k}. Then, for all  $t>0$ and $i=1,\ldots, d$, we have 
	\begin{equation}\label{eq:pr-mean-2}
		\sum_{k=0}^\infty \abs{ \Pi_t^{\eps,i,k}(u_0^\eps)(x)-\tilde \Pi_t^{\eps,i,k}(u_0^\eps)(x)}\xrightarrow{\eps \to 0}0\comma
	\end{equation}
	uniformly over $x\in \T_\eps^d$.
	Here, $\tilde \Pi_t^{\eps,i,k}\eqdef \Pi_t^{\eps,i,k}$ for $k=0$, while, for $k\ge1$, 
	\begin{equation}\label{eq:tilde-Pi-eps-k}
		\begin{aligned}
		&\tilde \Pi_t^{\eps,i,k}(u_0^\eps)(x)\eqdef \sum_{j=1}^d \car_{j_k=j}\, \Pi_t^{\eps,j,0}(u_0^\eps)(x)\\
		&\qquad \times  \int_{[0,t]_>^k}\dd s_1\cdots \dd s_k\sum_{y_1,\ldots, y_k\in \T_\eps^d}\sum_{j_1,\ldots, j_k=1}^d \tonde{\prod_{\ell=1}^k q_{s_{\ell-1}-s_\ell}^{\eps,j_{\ell-1},j_\ell}(y_{\ell-1},y_\ell)} \fstop
		\end{aligned}
	\end{equation}
\end{proposition}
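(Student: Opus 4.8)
The plan hinges on the observation that $\tilde\Pi_t^{\eps,i,k}$ is obtained from $\Pi_t^{\eps,i,k}$ simply by replacing the final factor $\Pi_{s_k}^{\eps,j_k,0}(u_0^\eps)(y_k)$ in \eqref{eq:Pi-eps-k} by $\Pi_t^{\eps,j_k,0}(u_0^\eps)(x)$. Writing $h_j\eqdef\nabla^{\eps,j}u_0^\eps$ and $F_s^j(y)\eqdef\Pi_s^{\eps,j,0}(u_0^\eps)(y)=\big(P_s^\eps h_j(y)\big)^2$ (cf.\ \eqref{eq:Pi-eps-0} and the intertwining \eqref{eq:intertwining}), the $k=0$ terms coincide, while for $k\ge1$ the difference $\Pi_t^{\eps,i,k}(u_0^\eps)(x)-\tilde\Pi_t^{\eps,i,k}(u_0^\eps)(x)$ equals the iterated integral in \eqref{eq:Pi-eps-k} with the factor $\Pi_{s_k}^{\eps,j_k,0}(u_0^\eps)(y_k)$ replaced throughout by $F_{s_k}^{j_k}(y_k)-F_t^{j_k}(x)$. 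Heuristically this should be negligible: by Lemma \ref{lemma:Q1} each layer of kernels $q^\eps$ carries total mass $\tfrac12$, but---as is visible from the explicit form \eqref{eq:bt-eps}--\eqref{eq:B}---essentially all of it sits on time-lags of order $\eps^2$ and spatial displacements of order $\eps$, whereas $F$ varies only on the macroscopic scale, so replacing $(s_k,y_k)$ by $(t,x)$ should cost a vanishing amount.

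To turn this into an estimate, I would first record the regularity of $F$ inherited from $u_0\in\cC^2(\T^d)$: (a) $\|F_s^j\|_{L^\infty(\T_\eps^d)}\le\|u_0\|_{\cC^1(\T^d)}^2$ uniformly in $s$, by $L^\infty$-contractivity of $P_s^\eps$; (b) $|F_s^j(y)-F_s^j(x)|\le C(u_0)\,|y-x|$ uniformly in $s$ and $\eps$, since $a^2-b^2=(a-b)(a+b)$, $h_j$ extends to a $\|u_0\|_{\cC^2(\T^d)}$-Lipschitz function on $\T^d$, and $p_s^\eps$ is translation invariant; and (c) $|F_s^j(x)-F_t^j(x)|\le C(u_0)\,(t-s)\,\eps^{-1}\omega(\eps)$ for $s\le t$, where $\omega(\eps)\xrightarrow{\eps\to0}0$ bounds $\eps\,\|\nabla^{\eps,j}\Delta_\eps u_0^\eps\|_{L^\infty(\T_\eps^d)}$ (this is $o(\eps^{-1})$ because $\Delta_\eps u_0^\eps\to\Delta u_0$ uniformly with $\Delta u_0$ uniformly continuous), using $P_t^\eps h_j-P_s^\eps h_j=\int_s^t\tfrac12\Delta_\eps P_r^\eps h_j\,\dd r$, the commutation $\Delta_\eps P_r^\eps=P_r^\eps\Delta_\eps$, and $\Delta_\eps h_j=\nabla^{\eps,j}\Delta_\eps u_0^\eps$. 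Combining (a)--(c) gives, for $s_k\le t$,
\begin{equation*}
\big|F_{s_k}^{j_k}(y_k)-F_t^{j_k}(x)\big|\le C(u_0)\big(|y_k-x|+(t-s_k)\,\eps^{-1}\omega(\eps)\big).
\end{equation*}

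Next I would insert this bound, split $|y_k-x|\le\sum_{\ell=1}^k|y_\ell-y_{\ell-1}|$ and $t-s_k=\sum_{\ell=1}^k(s_{\ell-1}-s_\ell)$, and sum the kernels $q^\eps$ out one layer at a time, exactly as in the proof of Lemma \ref{lemma:H1-contractivity}: isolating a layer $m$, the layers $\ell\ne m$ collapse to $\prod_{\ell\ne m}\cQ_\eps(s_{\ell-1}-s_\ell)$ via \eqref{eq:B-invariance}, leaving either $\cR_\eps(s_{m-1}-s_m)$ or $(s_{m-1}-s_m)\,\cQ_\eps(s_{m-1}-s_m)$, where $\cR_\eps(\tau)\eqdef\sup_i\sum_{j=1}^d\sum_{z\in\T_\eps^d}q_\tau^{\eps,i,j}(0,z)\,|z|$. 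Enlarging the time simplex to $[0,\infty)^k$ and using $\int_0^\infty\cQ_\eps(\tau)\,\dd\tau=\tfrac12$ together with $\sum_{k\ge1}k\,2^{-(k-1)}=4$, this would yield, uniformly in $x\in\T_\eps^d$,
\begin{equation*}
\sum_{k\ge1}\big|\Pi_t^{\eps,i,k}(u_0^\eps)(x)-\tilde\Pi_t^{\eps,i,k}(u_0^\eps)(x)\big|\le 4\,C(u_0)\Big(\int_0^\infty\cR_\eps(\tau)\,\dd\tau+\eps^{-1}\omega(\eps)\int_0^\infty\tau\,\cQ_\eps(\tau)\,\dd\tau\Big).
\end{equation*}

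It then remains only to prove the two scaling estimates $\int_0^\infty\cR_\eps(\tau)\,\dd\tau\le C\eps$ and $\int_0^\infty\tau\,\cQ_\eps(\tau)\,\dd\tau\le C\eps^2$, after which the right-hand side above is $O(\eps+\eps\,\omega(\eps))\to0$. Establishing these---that is, making precise that $q^\eps$ is genuinely microscopic, with its time mass concentrated on $\tau=O(\eps^2)$ and, on that scale, spatial jumps of size $O(\eps)$---is the step I expect to be the main obstacle; it requires the same Fourier/heat-kernel computation on $\T_\eps^d$ that underlies Lemma \ref{lemma:Q1} and the exit-time bound \eqref{eq:exit-time}, now carrying along the first moment in space and the first moment in time. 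A secondary subtlety is that, because $u_0$ is only $\cC^2$, the quantity $\|\nabla^{\eps,j}\Delta_\eps u_0^\eps\|_{L^\infty(\T_\eps^d)}$ is merely $o(\eps^{-1})$ and not $O(1)$; this is exactly why step (c) must be paired with the genuine gain $\int_0^\infty\tau\,\cQ_\eps(\tau)\,\dd\tau=o(\eps)$ rather than with mere boundedness of that time-moment.
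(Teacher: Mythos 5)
Your approach takes a genuinely different route from the paper's. The paper treats the difference $\Pi_t^{\eps,i,k}-\tilde\Pi_t^{\eps,i,k}$ by an iterative \emph{localization} argument: it truncates the sum at $k\le \lceil|\log\eps|\rceil^2$, restricts each time integration to $[s_{\ell-1}-\eps t,s_{\ell-1}]$ layer by layer (paying a $t^{-3/2}\eps^{3/2}2^{-k}$ tail each time via Lemma \ref{lemma:Q3}), introduces a spatial cutoff radius $\rho$, and then controls the remainder with a modulus of continuity of $P_s\nabla^ju_0$ in space and time, together with the exponential bound $I_t^{\eps,k}$ borrowed from the proof of Lemma \ref{lemma:short-time}. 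You instead attempt to avoid all cutoffs and logarithmic losses by proving \emph{first-moment} bounds on the kernel $q^\eps$ in both time and space, and then using global Lipschitz/regularity estimates (a)--(c) for $F_s^j=(P_s^\eps\nabla^{\eps,j}u_0^\eps)^2$. The collapsing of the $y$- and $\tau$-integrals one layer at a time with one layer carrying the first moment, the transfer to the $j$-chain via the row-sum $\mathfrak b_\eps+(d-1)\mathfrak c_\eps=\tfrac12$, the $\sum_k k\,2^{-(k-1)}=4$ resummation, and the observation that $u_0\in\cC^2$ yields $\|\nabla^{\eps,j}\Delta_\eps u_0^\eps\|_\infty=o(\eps^{-1})$ are all correct.

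That said, there is a genuine gap, though smaller than you fear. Of the two scaling estimates you defer, the time-moment bound actually follows directly from Lemma \ref{lemma:Q3} by Fubini: since $\cQ_\eps\ge0$,
\begin{equation}
\int_0^\infty\tau\,\cQ_\eps(\tau)\,\dd\tau=\int_0^\infty\bigg(\int_T^\infty\cQ_\eps(\tau)\,\dd\tau\bigg)\dd T\le c_3\,\eps^3\int_0^\infty\ttonde{T\vee\eps^2}^{-3/2}\,\dd T=3c_3\,\eps^2\comma
\end{equation}
so that part is already available in the paper. The remaining obstacle is the spatial first-moment bound
\begin{equation}
\int_0^\infty\sum_{z\in\T_\eps^d}q_\tau^{\eps,i,j}(0,z)\,\abs{z}\,\dd\tau\le C\,\eps\comma
\end{equation}
which is \emph{not} established anywhere in the paper, and does not reduce to the exit-time estimate \eqref{eq:exit-time} by a one-line argument (one needs a first moment, not an exponential tail of the mass). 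It does appear to be true, with the correct $\eps$-scaling visible from $q_\tau^{\eps,i,j}(0,z)\approx\eps^{2(d+1)}\tau^{-(d+2)}\psi^2(z/\sqrt\tau)$ on macroscopic scales and from the product/Laplace structure of $\pi_\tau^\eps$ used in the appendix; a clean route would be Cauchy--Schwarz together with a computation of $\sum_z q_\tau^{\eps,i,j}(0,z)\abs{z}^2$ coordinate by coordinate. But until that computation is carried out, the proposal is incomplete. If it is carried out, your argument is in fact cleaner than the paper's proof, avoiding both the $\lceil|\log\eps|\rceil^2$ cutoff in $k$ and the $\rho$-localization entirely.
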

\begin{proposition}\label{pr:mean-1}Recall \eqref{eq:tilde-Pi-eps-k}. Then, there exists $\mathfrak a=\mathfrak a(d)\in (0,1]$ such that, 
	for all $t> 0$ and $i=1,\ldots,d$,  	we have
	\begin{equation}\label{eq:pr-mean-1}
		{\sum_{k=0}^\infty \tilde \Pi_t^{\eps,i,k}(u_0^\eps)(x)}\xrightarrow{\eps\to 0} \sum_{j=1}^d \tonde{\tonde{1+\mathfrak a}\car_{i=j}+\frac{1-\mathfrak a}{d-1}\,\car_{i\neq j}}\tonde{\nabla^j u_t(x)}^2\comma
	\end{equation}
	uniformly over $x\in \T_\eps^d$. 
\end{proposition}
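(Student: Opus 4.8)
\medskip

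The plan is to factor the left-hand side of \eqref{eq:pr-mean-1} as a product of two pieces, each with an identifiable $\eps\to0$ limit: a ``gradient'' factor converging to $(\nabla^ju_t(x))^2$, and a ``renewal'' factor $S^\eps_{i,j}(t)$ converging to the $(i,j)$-entry of a resolvent $(I-M)^{-1}$ of an explicit $d\times d$ matrix $M$, from which $\mathfrak a$ is read off. In \eqref{eq:tilde-Pi-eps-k} the factor $\Pi_t^{\eps,j_k,0}(u_0^\eps)(x)$ depends neither on $y_1,\ldots,y_k$ nor on $j_1,\ldots,j_{k-1}$, and, by translation invariance of the transition probabilities (hence of $q^\eps$, cf.\ \eqref{eq:bt-eps}--\eqref{eq:B}), summing out $y_1,\ldots,y_k$ with $y_0=x$ telescopes, giving $\sum_{y_1,\ldots,y_k}\prod_{\ell=1}^k q_{s_{\ell-1}-s_\ell}^{\eps,j_{\ell-1},j_\ell}(y_{\ell-1},y_\ell)=\prod_{\ell=1}^k R^\eps_{j_{\ell-1},j_\ell}(s_{\ell-1}-s_\ell)$, where $R^\eps_{a,b}(s)\eqdef\sum_{y\in\T_\eps^d}q_s^{\eps,a,b}(x,y)$ (independent of $x$). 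Summing over $k\ge0$ and pulling the $j_k$-indexed factor out of the remaining sums yields, with $j_0=i$,
\begin{equation}\label{eq:pp-factor}
	\sum_{k=0}^\infty\tilde\Pi_t^{\eps,i,k}(u_0^\eps)(x)=\sum_{j=1}^d \Pi_t^{\eps,j,0}(u_0^\eps)(x)\,S^\eps_{i,j}(t)\comma\qquad S^\eps_{i,j}(t)\eqdef \car_{i=j}+\sum_{k=1}^\infty\int_{[0,t]_>^k}\dd s_1\cdots\dd s_k\sum_{\substack{j_1,\ldots,j_{k-1}=1\\ j_k=j}}^d\prod_{\ell=1}^k R^\eps_{j_{\ell-1},j_\ell}(s_{\ell-1}-s_\ell)\fstop
\end{equation}
Crucially $S^\eps_{i,j}(t)$ depends only on $d,t,i,j$, not on $x$ or $u_0^\eps$.

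For the gradient factor, intertwining \eqref{eq:intertwining} gives $\Pi_t^{\eps,j,0}(u_0^\eps)(x)=(\nabla^{\eps,j}P_t^\eps u_0^\eps(x))^2$ (cf.\ \eqref{eq:Pi-eps-0}), and the standard finite-difference approximation of the heat semigroup — cleanly via the Fourier representation $P_t^\eps u_0^\eps(x)=\sum_k\widehat{u_0^\eps}(k)e^{2\pi ik\cdot x}e^{-t\mu_\eps(k)/2}$ on $\T_\eps^d$, $\mu_\eps(k)=\eps^{-2}\sum_{m=1}^d4\sin^2(\pi\eps k_m)$, together with dominated convergence in $k$ (the multiplier $\eps^{-1}(e^{2\pi i\eps k_j}-1)\to2\pi ik_j$ is controlled by the Gaussian $e^{-t\mu_\eps(k)/2}$) — gives, for each fixed $t>0$, $\nabla^{\eps,j}P_t^\eps u_0^\eps(x)\to\nabla^ju_t(x)$ uniformly over $x\in\T_\eps^d$, hence $\Pi_t^{\eps,j,0}(u_0^\eps)(x)\to(\nabla^ju_t(x))^2$ uniformly. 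So everything reduces to computing $\lim_{\eps\to0}S^\eps_{i,j}(t)$. The input here is a Fourier-side formula for $R^\eps_{a,b}$: using $\eps^2\nabla_*^{\eps,b}\nabla^{\eps,a}P_s^\eps=2\,b_s^{\eps,a,b}$ (as in the proof of Lemma \ref{lemma:mild-solution-gradients}) and Parseval on $\T_\eps^d$,
\begin{equation}\label{eq:pp-Rfourier}
	R^\eps_{a,b}(s)=4\,\eps^{d-2}\sum_{k\in\{0,\ldots,\eps^{-1}-1\}^d}\sin^2(\pi\eps k_a)\,\sin^2(\pi\eps k_b)\,e^{-s\mu_\eps(k)}\fstop
\end{equation}

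From \eqref{eq:pp-Rfourier} I extract two facts. (i) $\int_0^\infty R^\eps_{a,b}(s)\,\dd s=\eps^d\sum_{k\neq0}\frac{\sin^2(\pi\eps k_a)\sin^2(\pi\eps k_b)}{\sum_m\sin^2(\pi\eps k_m)}$ is a Riemann sum (mesh $\eps$) of a bounded integrand, continuous on $\T^d$ off the origin with a removable singularity there, so $\int_0^\infty R^\eps_{a,b}(s)\,\dd s\to\mathfrak r_{a,b}\eqdef\int_{[0,1)^d}\frac{\sin^2(\pi\xi_a)\sin^2(\pi\xi_b)}{\sum_{m=1}^d\sin^2(\pi\xi_m)}\,\dd\xi$. (ii) For each fixed $\delta>0$, using $\mu_\eps(k)\ge16|\bar k|^2$ and $\sin^2(\pi\eps k_a)\sin^2(\pi\eps k_b)\le\pi^4\eps^4|\bar k|^4$, with $\bar k$ the representative of $k$ in $(-\tfrac{\eps^{-1}}2,\tfrac{\eps^{-1}}2]^d$, one bounds $\int_\delta^\infty R^\eps_{a,b}(s)\,\dd s\le C\,\eps^{d+2}\sum_{\bar k\in\Z^d\setminus\{0\}}|\bar k|^2e^{-16\delta|\bar k|^2}\to0$. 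Thus $R^\eps_{a,b}(s)\,\dd s$ concentrates at $s=0$ with total mass $\to\mathfrak r_{a,b}$, whence $\int_0^t R^\eps_{a,b}\to\mathfrak r_{a,b}$ for every $t>0$. Together with Lemma \ref{lemma:Q1} — which, via $\sum_{j_1,\ldots,j_k}\prod_\ell R^\eps_{j_{\ell-1},j_\ell}(\cdot)=\prod_\ell\cQ_\eps(\cdot)$ and $\int_0^\infty\cQ_\eps=\tfrac12$, furnishes the $\eps$-uniform bound $0\le\int_{[0,t]_>^k}\sum_{j_1,\ldots,j_{k-1}}\prod_\ell R^\eps_{j_{\ell-1},j_\ell}\le2^{-k}$, hence uniform summability of the $k$-series in \eqref{eq:pp-factor} — a $k$-fold-convolution argument (change of variables $u_\ell=s_{\ell-1}-s_\ell$, then discarding the vanishing contribution of $\{\sum_\ell u_\ell>t\}$, on which some $u_\ell>t/k$) gives $S^\eps_{i,j}(t)\to\sum_{k\ge0}(M^k)_{i,j}=\big((I-M)^{-1}\big)_{i,j}$ with $M\eqdef(\mathfrak r_{a,b})_{a,b=1,\ldots,d}$.

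It remains to compute the resolvent. By the rotational symmetry of $\T_\eps^d$, $\mathfrak r_{a,b}$ takes only two values, $\beta_0\eqdef\mathfrak r_{a,a}$ and $\beta_1\eqdef\mathfrak r_{a,b}$ ($a\neq b$), so $M=\beta_0 I+\beta_1(J-I)$ with $J$ the all-ones matrix; moreover $\beta_0+(d-1)\beta_1=\sum_b\mathfrak r_{a,b}=\int_{[0,1)^d}\sin^2(\pi\xi_a)\,\dd\xi=\tfrac12$ (consistent with Lemma \ref{lemma:Q1}), and $\beta_0-\beta_1=\tfrac12\int_{[0,1)^d}\frac{(\sin^2(\pi\xi_a)-\sin^2(\pi\xi_b))^2}{\sum_m\sin^2(\pi\xi_m)}\,\dd\xi\in[0,\tfrac12]$. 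Hence $M$ has eigenvalue $\tfrac12$ on the all-ones vector and $\beta_0-\beta_1$ on its orthogonal complement, $I-M$ is invertible, and
\begin{equation}\label{eq:pp-resolvent}
	\big((I-M)^{-1}\big)_{i,j}=\frac{2}{d}+\Big(\car_{i=j}-\frac1d\Big)\frac{1}{1-(\beta_0-\beta_1)}=(1+\mathfrak a)\,\car_{i=j}+\frac{1-\mathfrak a}{d-1}\,\car_{i\neq j}\comma\quad \mathfrak a\eqdef\frac{2-d}{d}+\frac{d-1}{d\,(1-(\beta_0-\beta_1))}\fstop
\end{equation}
Since $1-(\beta_0-\beta_1)\in[\tfrac12,1]$, one reads off $\mathfrak a\in[\tfrac1d,1]\subset(0,1]$ (for $d=1$ there is no off-diagonal, $M=(\tfrac12)$, and $\mathfrak a=1$). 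Feeding \eqref{eq:pp-resolvent} and the uniform gradient convergence into \eqref{eq:pp-factor} proves \eqref{eq:pr-mean-1}. The main obstacle I anticipate is this renewal analysis of $S^\eps_{i,j}(t)$: showing that the lattice-scale ($s=O(\eps^2)$) portion of $R^\eps_{a,b}$ — precisely where its mass concentrates — contributes exactly the Riemann-integral constant $\mathfrak r_{a,b}$ while its diffusive tail is negligible, and then resumming the $k$-series into $(I-M)^{-1}$ so that the non-diagonal correlation structure \eqref{eq:noise-structure} emerges from the originally i.i.d.\ diagonal noise.
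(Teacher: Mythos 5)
Your proposal is correct and arrives at the same decomposition (factoring out $\Pi_t^{\eps,j,0}(u_0^\eps)(x)$, reducing to a $k$-sum over a $q^\eps$-chain), but the two middle steps are handled by a genuinely different route. Where the paper passes from the time-restricted sums $R_t^{\eps,i,j,k}$ to the infinite-horizon ones $R_\infty^{\eps,i,j,k}$ by a quantitative sandwich $(1-Ct^{-3/2}\eps^{3/2})^k R_\infty^{\eps,i,j,k}\le R_t^{\eps,i,j,k}\le R_\infty^{\eps,i,j,k}$ valid for $k\le\lfloor\eps^{-1}\rfloor$ (which is what Lemma \ref{lemma:Q3} is for), you instead prove termwise convergence $A_k^\eps(t)\to(M^k)_{i,j}$ for each fixed $k$ via a Fourier tail bound $\int_\delta^\infty R^\eps_{a,b}\to 0$, and then sum using the $\eps$-uniform bound $\sum_j A_k^\eps\le 2^{-k}$ from Lemma \ref{lemma:Q1}; this is slightly less quantitative but more elementary, and sidesteps Lemma \ref{lemma:Q3} entirely. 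Where the paper computes $\sum_k R_\infty^{\eps,i,j,k}$ by reducing to a $2\times2$ linear recursion in $(F_k^\eps,G_k^\eps)$ (exploiting that the quantity depends only on $\car_{i=j}$), you compute the full $d\times d$ resolvent $(I-M)^{-1}$ directly using the circulant structure $M=\beta_0 I+\beta_1(J-I)$ and its spectral decomposition; this is cleaner and makes the nonnegativity $\beta_0-\beta_1\ge0$ (hence $\mathfrak a\le1$) transparent via the quadratic form $\frac12\int(\sin^2-\sin^2)^2/\sum\sin^2$. I checked the algebra: your $\mathfrak a=\frac{2-d}{d}+\frac{d-1}{d(1-(\beta_0-\beta_1))}$ matches the paper's $\mathfrak a=\frac{\mathfrak b+\mathfrak c}{1-\mathfrak b+\mathfrak c}$ under $\beta_0=\mathfrak b$, $\beta_1=\mathfrak c$, and your Fourier formula \eqref{eq:pp-Rfourier} and the resulting Riemann-sum limit $\mathfrak r_{a,b}$ coincide with the paper's derivation of $\mathfrak b,\mathfrak c$ in the Appendix (using $\psi(z)=2\sin^2(\pi z)$). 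Finally, your gradient-factor convergence via the discrete-Fourier multiplier is a valid self-contained substitute for the paper's appeal to the functional CLT in \eqref{eq:FCLT}. The only point worth spelling out more fully in a final write-up is the discarding step: after summing over $j_1,\ldots,j_{k-1}$, the tail contribution factors as a matrix product $(M_\eps^{m-1}\,\tilde M_\eps(t/k)\,M_\eps^{k-m})_{i,j}$ with $\tilde M_\eps(\delta)_{a,b}:=\int_\delta^\infty R^\eps_{a,b}$, and one should invoke $\|M_\eps^\ell\|_\infty\le 2^{-\ell}$ and $\tilde M_\eps(t/k)\to 0$ entrywise to justify the fixed-$k$ limit; your sketch gets this right but states it compactly.
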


We present the proofs of these two propositions in the subsequent section. Before that, we conclude this part by proving Theorem \ref{th:mean}.
\begin{proof}[Proof of Theorem \ref{th:mean}]
	From \eqref{eq:Gamma-eps} and Proposition \ref{pr:second-moments}, we have
	\begin{align}
		\E\big[\varGamma_t^\eps(g)\big]&= \sum_{i=1}^d \int_0^t \eps^d \sum_{x\in \T_\eps^d} \E\big[\tonde{\nabla^{\eps,i}u_s^\eps(x)}^2\big]g_s^i(x)\, \dd s\\
		&=\sum_{i=1}^d\int_0^t  \eps^d \sum_{x\in \T_\eps^d} \set{\sum_{k=0}^\infty \Pi_s^{\eps,i,k}(u_0^\eps)(x)}g_s^i(x)\, \dd s\fstop
	\end{align}
	By the uniform boundedness of $g$ and the estimate in Lemma \ref{lemma:H1-contractivity}, we may apply the dominated convergence theorem, so that the convergence in Theorem \ref{th:mean} reduces to that of
	the term in curly brackets above, for all $s>0$ and uniformly over $x\in \T_\eps^d$. Recalling the expression of $\varGamma_t(g)$ from \eqref{eq:Gamma}, this is precisely the content of Propositions \ref{pr:mean-2} and \ref{pr:mean-1}.	
\end{proof} 
\subsection{Proofs}\label{sec:mean-proofs} We start with two lemmas on some additional properties of the functions $q^\eps$ defined in \eqref{eq:B}. The first of this lemma is a refinement of Lemma \ref{lemma:Q1} in the multidimensional case $d\ge 2$. We defer the proof of the following lemma to \hyperref[sec:app]{Appendix}.
\begin{lemma}\label{lemma:Q2} Fix $d\ge 2$. Then, for all $x\in \T_\eps^d$, $t\ge 0$, and $i,j=1,\ldots, d$, the quantity
	\begin{equation}\label{eq:Q-i-j}
		\cQ_\eps^{i,j}(t)\eqdef	\sum_{y\in \T_\eps^d}q_t^{\eps,i,j}(x,y)
	\end{equation}
	depends only on $d\ge 2$, $t\ge  0$ and $\car_{i=j}$. Moreover, letting
	\begin{equation}\label{eq:b-c-eps}
		\int_0^\infty \cQ_\eps^{i,j}(t)\, \dd t \defeq \begin{dcases}
			\mathfrak 	b_\eps &\text{if}\ i=j\\
			\mathfrak c_\eps &\text{if}\ i\neq j\comma
		\end{dcases}
	\end{equation}
	we have $\mathfrak b_\eps,\mathfrak c_\eps\in (0,\frac12]$ and	
	the following limits 
	\begin{equation}\label{eq:b-c-limits}
		\mathfrak b\eqdef \lim_{\eps \to 0}\mathfrak b_\eps \comma\qquad \mathfrak c\eqdef \lim_{\eps\to 0}\mathfrak c_\eps
	\end{equation}
	exist in $(0,\frac12]$.
\end{lemma}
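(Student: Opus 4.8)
The plan is to reduce both assertions to an explicit Fourier computation on $\T_\eps^d$.

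\emph{The structural claim.} Translation invariance of $p_t^\eps$ (hence of $b_t^{\eps,i,j}$ and $q_t^{\eps,i,j}$ under $(x,y)\mapsto(x+z,y+z)$) already shows that $\cQ_\eps^{i,j}(t)=\sum_{y}q_t^{\eps,i,j}(x,y)$ does not depend on $x$. To see that it depends on $(i,j)$ only through $\car_{i=j}$, observe that for every permutation $\sigma$ of $\{1,\dots,d\}$ the coordinate relabelling $x\mapsto\sigma x$ is a graph automorphism of $\T_\eps^d$, under which $p_t^\eps(\sigma x,\sigma y)=p_t^\eps(x,y)$; since $\sigma(x\pm\eps e_i)=\sigma x\pm\eps e_{\sigma(i)}$, formula \eqref{eq:bt-eps} gives $b_t^{\eps,\sigma(i),\sigma(j)}(\sigma x,\sigma y)=b_t^{\eps,i,j}(x,y)$, and summing over $y$ yields $\cQ_\eps^{\sigma(i),\sigma(j)}(t)=\cQ_\eps^{i,j}(t)$. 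Since $d\ge 2$, every pair with $i=j$ is mapped to $(1,1)$ and every pair with $i\neq j$ to $(1,2)$, which proves the first assertion; we then set $\mathfrak b_\eps\eqdef\int_0^\infty\cQ_\eps^{1,1}(t)\,\dd t$ and $\mathfrak c_\eps\eqdef\int_0^\infty\cQ_\eps^{1,2}(t)\,\dd t$.

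\emph{Explicit formulas for $\mathfrak b_\eps,\mathfrak c_\eps$.} I would expand $p_t^\eps(x,y)=\eps^d\sum_m e^{-t\lambda_m^\eps}\phi_m(x)\overline{\phi_m(y)}$ over the characters $\phi_m(x)=e^{2\pi i m\cdot x}$, with $m$ ranging over a set of $\eps^{-d}$ representatives of the dual group (so that $\eps m\in\eps\Z^d\cap[0,1)^d$) and $\lambda_m^\eps=\eps^{-2}\sum_{j=1}^d(1-\cos 2\pi\eps m_j)$. Using $b_t^{\eps,i,i}=\tfrac{\eps^2}{2}\nabla_*^{\eps,i}\nabla^{\eps,i}p_t^\eps$ and $b_t^{\eps,i,j}=\tfrac{\eps^2}{2}\nabla^{\eps,i}\nabla_*^{\eps,j}p_t^\eps$ (for $i\neq j$), the eigenvalue relation $\nabla_*^{\eps,i}\nabla^{\eps,i}\phi_m=-2\eps^{-2}(1-\cos 2\pi\eps m_i)\phi_m$, the fact that $\nabla^{\eps,i}\nabla_*^{\eps,j}$ acts on $\phi_m$ as multiplication by a factor of squared modulus $4\eps^{-4}(1-\cos2\pi\eps m_i)(1-\cos2\pi\eps m_j)$, and orthogonality of the $\phi_m$ in $\ell^2(\T_\eps^d)$, one obtains $\cQ_\eps^{1,1}(t)=\eps^{d-2}\sum_m(1-\cos2\pi\eps m_1)^2 e^{-2t\lambda_m^\eps}$ and $\cQ_\eps^{1,2}(t)=\eps^{d-2}\sum_m(1-\cos2\pi\eps m_1)(1-\cos2\pi\eps m_2)e^{-2t\lambda_m^\eps}$. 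Integrating in $t$ (Tonelli, all summands nonnegative), the $m=0$ terms drop out and, since $2\lambda_m^\eps\,\eps^2=2\sum_j(1-\cos2\pi\eps m_j)$,
\[
\mathfrak b_\eps=\eps^{d}\sum_{m\neq 0}\frac{(1-\cos 2\pi\eps m_1)^2}{2\sum_{j=1}^d(1-\cos 2\pi\eps m_j)}\comma\qquad \mathfrak c_\eps=\eps^{d}\sum_{m\neq 0}\frac{(1-\cos 2\pi\eps m_1)(1-\cos 2\pi\eps m_2)}{2\sum_{j=1}^d(1-\cos 2\pi\eps m_j)}\fstop
\]
Both sums are strictly positive (e.g.\ a summand with $\eps m_1,\eps m_2\notin\Z$ is positive), and summing the first assertion over the $d$ directions gives $\cQ_\eps^{1,1}(t)+(d-1)\cQ_\eps^{1,2}(t)=\cQ_\eps(t)$, so $\mathfrak b_\eps+(d-1)\mathfrak c_\eps=\int_0^\infty\cQ_\eps(t)\,\dd t=\tfrac12$ by \eqref{eq:B-12}; hence $\mathfrak b_\eps,\mathfrak c_\eps\in(0,\tfrac12]$.

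\emph{Passage to the limit.} The two expressions above are Riemann sums, with mesh $\eps$ and volume element $\eps^d$, over the uniform grid $\eps\Z^d\cap[0,1)^d\subset\T^d$ of the $1$-periodic functions
\[
F(\theta)=\frac{(1-\cos2\pi\theta_1)^2}{2\sum_{j=1}^d(1-\cos2\pi\theta_j)}\comma\qquad G(\theta)=\frac{(1-\cos2\pi\theta_1)(1-\cos2\pi\theta_2)}{2\sum_{j=1}^d(1-\cos2\pi\theta_j)}\fstop
\]
Since $1-\cos2\pi\theta_j\asymp\theta_j^2$ near the origin, one has $F(\theta)=O(|\theta|^2)$ and $G(\theta)=O(\min(\theta_1^2,\theta_2^2))$, so $F$ and $G$ extend continuously to all of $\T^d$ (with value $0$ at the origin, the unique zero of the denominator) and are bounded. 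Convergence of Riemann sums then yields $\mathfrak b_\eps\to\mathfrak b\eqdef\int_{\T^d}F(\theta)\,\dd\theta$ and $\mathfrak c_\eps\to\mathfrak c\eqdef\int_{\T^d}G(\theta)\,\dd\theta$, both in $(0,\infty)$; passing to the limit in $\mathfrak b_\eps+(d-1)\mathfrak c_\eps=\tfrac12$ gives $\mathfrak b+(d-1)\mathfrak c=\tfrac12$, whence $\mathfrak b,\mathfrak c\in(0,\tfrac12]$. I expect the only genuinely delicate point in this scheme to be the behaviour of $F$ and $G$ at $\theta=0$: the fact that the numerators vanish fast enough there (quadratically, after dividing by the denominator) is precisely what keeps the Riemann sums bounded, and this cancellation is the analytic reflection of $b_t^{\eps,i,j}$ being a \emph{second-order} difference of $p_t^\eps$. (As a consistency check, evaluating the two-dimensional integrals recovers the value of $\mathfrak a$ recorded in Remark \ref{remark:constant-a}.)
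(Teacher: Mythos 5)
Your proof is correct, and it departs from the paper's at two points. For the structural claim (dependence on $(i,j)$ only through $\car_{i=j}$) you give a short abstract argument via translation invariance and the coordinate-permutation automorphisms of $\T_\eps^d$, which is clean and self-contained; the paper instead reveals this dependence by computing $\cQ_\eps^{i,j}(t)$ explicitly using the product structure $p_t^\eps(0,x)=\prod_\ell\pi_t^\eps(x^\ell)$ of the walk, obtaining $\cQ_\eps^{i,j}(t)=\cR_\eps(t)^{d-2}\cS_\eps(t)^2$ (for $i\neq j$) and $\cR_\eps(t)^{d-1}\cT_\eps(t)$ (for $i=j$), where $\cR_\eps,\cS_\eps,\cT_\eps$ are one-dimensional $L^2$-quantities. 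For the time-integrals and the $\eps\to0$ limit, the paper exploits the differential identities $\cS_\eps=-\frac{\eps}{2}\cR_\eps'$ and $\cT_\eps=-\frac{\eps}{2}\cS_\eps'$, so that $\int_0^\infty\cQ_\eps=\frac12$ comes out as a boundary term, and then passes to Fourier only for $\mathfrak c_\eps$, recovering $\mathfrak b_\eps$ from the linear relation. You instead diagonalize directly on $\T_\eps^d$ and compute both $\mathfrak b_\eps$ and $\mathfrak c_\eps$ as Riemann sums of the continuous bounded functions $F,G$ on $\T^d$; the quadratic vanishing at the origin that you flag is exactly the point that makes the Riemann-sum argument work. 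Both your formulas and the paper's agree (after writing $\psi(z)=1-\cos 2\pi z$), and your consistency check on $\mathfrak a$ for $d=2$ confirms it. One small dependency worth making explicit: your proof invokes $\int_0^\infty\cQ_\eps=\frac12$ from Lemma \ref{lemma:Q1}, which in the paper is proved jointly with this lemma via the $\cR,\cS,\cT$ calculus; that is fine as long as Lemma \ref{lemma:Q1} is established first.
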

\begin{remark}\label{remark:Q-Q-i-j}
	Comparing $\cQ_\eps^{i,j}$ and $\cQ_\eps$ given, respectively, in \eqref{eq:Q-i-j} and \eqref{eq:B-invariance}, we have
	\begin{equation}\label{eq:sum-Q-i-j}
		\sum_{j=1}^d \cQ_\eps^{i,j}(t)=\cQ_\eps(t)\comma\qquad t \ge 0\comma i=1,\ldots, d\comma
	\end{equation}
	which, together with Lemma \ref{lemma:Q2} and \eqref{eq:B-12}, yields	
	\begin{equation}\label{eq:b-c-12}
		\mathfrak b_\eps +\tonde{d-1}\mathfrak c_\eps = \frac12\comma\qquad \mathfrak b+\tonde{d-1}\mathfrak c=\frac12\fstop
	\end{equation}
\end{remark}
The following result refines property \eqref{eq:B-12}  when the extremes of integration are not $t=0$ and $t=\infty$, but rather $t=T>0$ and $t=\infty$. As already done for Lemmas \ref{lemma:Q1} and \ref{lemma:Q2}, we postpone its proof to \hyperref[sec:app]{Appendix}.
\begin{lemma}\label{lemma:Q3} There exists a constant $c_3=c_3(d)>0$ satisfying, for all $T> 0$,
	\begin{equation}\label{eq:lemma-Q3}
		\int_T^\infty \cQ_\eps(t)\, \dd t\le c_3\,{\eps^3}\tonde{T\vee \eps^2}^{-3/2}\fstop
	\end{equation}
\end{lemma}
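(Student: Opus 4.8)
The plan is to reduce everything to the heat kernel / transition probability estimates for the random walk $(X_t^\eps)_{t\ge 0}$ on $\T_\eps^d$, since $\cQ_\eps(t)$ is built out of squares of $b_t^{\eps,i,j}$, which are finite differences of $p_t^\eps$. First I would exploit the identity $\int_0^\infty \cQ_\eps(t)\,\dd t=\tfrac12$ from Lemma \ref{lemma:Q1}, so that \eqref{eq:lemma-Q3} is equivalent to a \emph{lower} bound $\int_0^T\cQ_\eps(t)\,\dd t\ge \tfrac12-c_3\eps^3(T\vee\eps^2)^{-3/2}$, or directly to an upper bound on the tail; I would attack the tail directly. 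Recalling \eqref{eq:B} and \eqref{eq:bt-eps}, $q_t^{\eps,i,j}(x,y)=\bigl(\eps^{-1}b_t^{\eps,i,j}(x,y)\bigr)^2$ where $\eps^{-1}b_t^{\eps,i,j}(x,y)$ is, up to a sign, a second-order discrete difference of $p_t^\eps(\cdot,y)$ (one difference in direction $e_i$, one in direction $e_j$). Summing over $y$ and over $j$, and using the semigroup/symmetry relations, $\cQ_\eps(t)=\sum_{j,y}\bigl(\eps^{-1}b_t^{\eps,i,j}(x,y)\bigr)^2$ can be rewritten via Parseval on $\T_\eps^d$ (or, more probabilistically, as $\sum_{j}\eps^{-d}\,\eps^2\|\nabla^{\eps,i}\nabla^{\eps,j}p_t^\eps(x,\cdot)\|^2_{\ell^2}$-type quantities divided appropriately) as a spectral sum $\sum_{m}\lambda_m^\eps(i)\,e^{-t\mu_m^\eps}$ over the Fourier modes $m$ of $\T_\eps^d$, where $\mu_m^\eps$ are the eigenvalues of $-\tfrac12\Delta_\eps$ and $\lambda_m^\eps(i)\ge 0$ are trigonometric weights of size $O(\eps^{-2})$ coming from the two discrete gradients together with the $\eps^{-2}$ in \eqref{eq:B}.

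Granting this spectral representation, the tail integral becomes $\int_T^\infty \cQ_\eps(t)\,\dd t=\sum_m \lambda_m^\eps(i)\,\mu_m^{\eps\,-1}e^{-T\mu_m^\eps}$. The dominant contribution for large $T$ is the lowest nonzero mode, for which $\mu_m^\eps\asymp 1$ and $\lambda_m^\eps(i)\asymp \eps^{-2}\cdot(\text{something of order }\eps^{?})$ — here one has to be careful. The cleaner route, and the one I would actually pursue, is to bound the tail by a pointwise-in-time estimate: show $\cQ_\eps(t)\le C\,\eps\,(t\vee\eps^2)^{-3/2}$ for all $t>0$ (uniformly in $\eps$), which upon integrating from $T$ to $\infty$ gives $\int_T^\infty\cQ_\eps(t)\,\dd t\le C'\eps\,(T\vee\eps^2)^{-1/2}$. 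That is weaker than the claimed $\eps^3(T\vee\eps^2)^{-3/2}$, so a single pointwise bound on $\cQ_\eps$ does not suffice and one genuinely needs the cancellation structure. The right pointwise statement to aim for is $\cQ_\eps(t)\le C\,\eps^3\,(t\vee\eps^2)^{-5/2}$ for $t\ge\eps^2$ (with $\cQ_\eps(t)\le C\eps^{-1}$ for $t\le\eps^2$, whose contribution to the tail is negligible when $T\gtrsim\eps^2$); integrating this from $T$ to $\infty$ produces exactly $c_3\eps^3(T\vee\eps^2)^{-3/2}$. To get the extra powers of $\eps$ one uses that $\eps^{-1}b_t^{\eps,i,j}$ is a \emph{second} difference, so in Fourier space it carries a factor $\asymp|m|^2\eps^2$ for low modes, hence $\lambda_m^\eps(i)\asymp |m|^4\eps^2$; combined with $\mu_m^\eps\asymp|m|^2$ and the Gaussian-type bound $\sum_{m\ne 0}|m|^4 e^{-ct|m|^2}\lesssim t^{-(d+4)/2}$ for $t\gtrsim\eps^2$ (cutting off the sum at $|m|\lesssim\eps^{-1}$), and the $\eps^d$ normalization from $\|\nabla\nabla p_t\|^2$, one lands on the $\eps^3$-improvement. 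Actually the cleanest bookkeeping is: write $\cQ_\eps(t)=\eps^{-2}\sum_{j}\eps^d\sum_y(\nabla^{\eps,i}\nabla^{\eps,j}_* p_t^\eps(x,y))^2$ up to harmless relabeling, which is $\eps^{-2}\sum_j\|\nabla^{\eps,i}\nabla^{\eps,j}_* p_t^\eps(x,\cdot)\|_{L^2(\T_\eps^d)}^2$; then use $\|\nabla^{\eps}\nabla^{\eps}_* p_t^\eps\|_{L^2}^2\lesssim\|\nabla^\eps\nabla^\eps_* p_{t/2}^\eps(x,\cdot)\|_{L^\infty}\,\|\nabla^\eps\nabla^\eps_* p_{t/2}^\eps(x,\cdot)\|_{L^1}$ and the standard kernel derivative bounds $\|\nabla^\eps\nabla^\eps_* p_t^\eps\|_{L^\infty}\lesssim(t\vee\eps^2)^{-(d+4)/2}\cdot\eps^{?}$ — here the scaling is $\|p_t^\eps\|_{L^\infty}\asymp(t\vee\eps^2)^{-d/2}$, each discrete gradient costs $(t\vee\eps^2)^{-1/2}$ in $L^\infty$ but only a bounded factor in $L^1$ of the gradient, so $\|\nabla^\eps\nabla^\eps_* p_t^\eps\|_{L^1}\lesssim(t\vee\eps^2)^{-1}$ — multiplying through gives the stated decay after putting back $\eps^{-2}$ and $\eps^d$.

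The main obstacle, and where I would spend the bulk of the work, is getting the \emph{sharp} power $\eps^3$ rather than a cruder $\eps$: this forces one to use \emph{two} derivatives' worth of cancellation plus the correct interplay between the $L^1$-$L^\infty$ split and the $\eps^{-2}$ prefactor, and to handle carefully the crossover regime $t\asymp\eps^2$ where the discrete nature of $\T_\eps^d$ matters (for $t\lesssim\eps^2$ one just bounds $\cQ_\eps(t)\le 2\cQ_\eps(0^+)\lesssim\eps^{-1}$ crudely and checks $\int_{\eps^2}^{\infty}$ of the good bound dominates). The Fourier-series argument is probably the most transparent way to make the two-derivative gain rigorous and uniform in $\eps$: cut the modes at $|m|\le\delta/\eps$, bound the weights $\lambda_m^\eps(i)\le C|m|^4\eps^2\wedge C\eps^{-2}$ and $\mu_m^\eps\ge c|m|^2$, and dominate $\int_T^\infty\sum_m(\cdots)$ by $\eps^2\int_T^\infty\sum_{m\ne0}|m|^2 e^{-ct|m|^2}\dd t\lesssim\eps^2\sum_{m\ne 0}e^{-cT|m|^2}\lesssim\eps^2(T\vee\eps^2)^{-d/2}$; for $d\ge 3$ this is already stronger than needed, while for $d=1,2$ one sharpens by noting that the relevant sum of $|m|^2e^{-cT|m|^2}$ over the finite lattice is $\lesssim(T\vee\eps^2)^{-(d+2)/2}$ and retaining one more factor of $\eps$ from the $|m|^4$ vs $|m|^2$ discrepancy, so in all dimensions the bound $\eps^3(T\vee\eps^2)^{-3/2}$ holds (it is the $d=1$ case that is the tightest and dictates the exponent $3/2$). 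Once the pointwise/tail bound is in hand the lemma follows immediately.
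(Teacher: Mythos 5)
Your route is genuinely different from the paper's, and while your final target (a pointwise bound $\cQ_\eps(t)\lesssim\eps^3(t\vee\eps^2)^{-5/2}$ for $t\ge\eps^2$ followed by integration) is the correct sharp statement, the proposal as written has errors and, more importantly, misses a much shorter path that the paper has already set up. The Appendix establishes the exact telescoping identity $\cQ_\eps(t)=-\tfrac{\eps}{2}\tonde{\cR_\eps(t)^{d-1}\cS_\eps(t)}'$ (see \eqref{eq:Q-R-S}), so the tail integral evaluates \emph{exactly} to $\tfrac{\eps}{2}\cR_\eps(T)^{d-1}\cS_\eps(T)$; bounding $\cR_\eps\le 1$ then reduces the problem to a purely one-dimensional statement, namely $\cS_\eps(T)\lesssim\eps^2(T\vee\eps^2)^{-3/2}$, which is a cited Dirichlet-form estimate for the 1D walk. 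Your approach ignores this product/telescoping structure and instead estimates $\cQ_\eps$ head-on in full dimension via Fourier, which works in principle but forces you to carry the dimension-dependent decay $\eps^{d+2}(t\vee\eps^2)^{-(d+4)/2}$ and argue separately why it dominates the $d=1$ benchmark.

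A few concrete defects in the sketch. First, the prefactor is wrong: from $b_t^{\eps,i,j}=\tfrac{\eps^2}{2}\nabla^{\eps,i}\nabla_*^{\eps,j}p_t^\eps(\cdot,y)$ and $q=(\eps^{-1}b)^2$ you should get $\cQ_\eps(t)=\tfrac{\eps^2}{4}\sum_{j}\sum_{y}\ttonde{\nabla^{\eps,i}\nabla_*^{\eps,j}p_t^\eps(x,y)}^2$, not the $\eps^{-2}\sum_j\eps^d\sum_y(\cdots)^2$ you wrote; this discrepancy of $\eps^{d-4}$ propagates through the subsequent bookkeeping. Second, the intermediate spectral bound $\eps^2(T\vee\eps^2)^{-d/2}$ and the claim that it is ``already stronger than needed'' for $d\ge 3$ are not correct: at $T\asymp\eps^2$ that expression is $\eps^{2-d}$, which blows up for $d\ge 3$, whereas the target $\eps^3(T\vee\eps^2)^{-3/2}$ evaluates to $O(1)$ there. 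Third, the crude bound for $t\lesssim\eps^2$ is $\cQ_\eps(0)\asymp\eps^{-2}$, not $\eps^{-1}$ (still harmless for the integral over $[T,\eps^2]$, but it is not the number you stated). With the correct prefactor, the Fourier identity $\cQ_\eps(t)=\eps^d\sum_m\psi(\eps m_i)\mu_m^\eps e^{-2t\mu_m^\eps}$ (with $\mu_m^\eps=\eps^{-2}\sum_j\psi(\eps m_j)$) integrates in closed form to $\tfrac{\eps^d}{2}\sum_m\psi(\eps m_i)e^{-2T\mu_m^\eps}$, which factorizes in $m_1,\ldots,m_d$ and gives back precisely $\tfrac{\eps}{2}\cR_\eps(T)^{d-1}\cS_\eps(T)$---i.e., your Fourier calculation, done carefully, rediscovers the paper's identity, at which point one should simply bound $\cR_\eps\le 1$ rather than chase pointwise-in-$t$ heat-kernel estimates.
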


We are now ready to prove one of the two propositions of the section. All throughout, we employ the notation of Lemma \ref{lemma:Q2} introduced for the case $d\ge 2$; if $d=1$, by writing, e.g., $\cQ_\eps^{i,j}$, $\mathfrak b_\eps$, $\mathfrak c_\eps$, we actually mean $\cQ_\eps$, $\frac12$, $0$,  respectively. 
\begin{proof}[Proof of Proposition \ref{pr:mean-1}]
	In view of Lemma \ref{lemma:Q2}, we may adopt the following shorthand notation for the time integral in \eqref{eq:tilde-Pi-eps-k}: for all integers $k\ge 1$, and for all  $t\ge 0$ and $i,j=1,\ldots, d$, 
	\begin{equation}\label{eq:R-i-j-k}
		R_t^{\eps,i,j,k}\eqdef \int_{[0,t]_>^k}\dd s_1\cdots \dd s_k  \sum_{j_0,j_1,\ldots, j_k=1}^d\tonde{\prod_{\ell=1}^k \cQ_\eps^{j_{\ell-1},j_\ell}(s_{\ell-1}-s_\ell)}\car_{j_0=i}\car_{j_k=j}\comma
	\end{equation}
	with the usual convention that $s_0\eqdef t$. Similarly,  we write
	\begin{equation}\label{eq:R-i-j-k-infty}
		R_\infty^{\eps,i,j,k}\eqdef \sum_{j_0,\ldots, j_k=1}^d \tonde{\prod_{\ell=1}^k\int_0^\infty \cQ_\eps^{j_{\ell-1},j_\ell}(s)\, \dd s}\car_{j_0=i}\car_{j_k=j}\fstop
	\end{equation}
	Further, set $R_t^{\eps,i,j,k}=R_\infty^{\eps,i,j,k}= \car_{i=j}$ for $k=0$. With these definitions, we readily obtain 
	\begin{equation}\label{eq:R-i-j-k-ub}
		R_t^{\eps,i,j,k}\le R_\infty^{\eps,i,j,k}\comma\qquad k\ge  0\fstop
	\end{equation} We now claim that, for all $t> 0$ and some $C=C(d)>0$, 
	\begin{equation}\label{eq:R-i-j-k-lb}
		\big(1-Ct^{-3/2}\eps^{3/2}\big)^k\,R_\infty^{\eps,i,j,k}\le R_t^{\eps,i,j,k}\comma\qquad k\le \lfloor \eps^{-1}\rfloor\fstop
	\end{equation} 
	Indeed, for all $k\le \lfloor \eps^{-1}\rfloor$,  $R_t^{\eps,i,j,k}$ is bounded below by
	\begin{equation}\label{eq:R-i-j-k-lb2}
		\sum_{j_0,\ldots, j_k=1}^d\car_{j_0=i}\car_{j_k=j}\int_{t-\eps t}^t \dd s_1\,  \cQ_\eps^{j_0,j_1}(\purple{t-s_1})\int_{s_1-\eps t}^{s_1}\dd s_2\cdots \int_{s_{k-1}-\eps t}^{s_{k-1}}\dd s_k\, \cQ_\eps^{j_{k-1},j}(s_{k-1}-s_k)\comma
	\end{equation}
	where we used $\cQ_\eps^{j_{\ell-1},j_\ell}\ge 0$ and $k\le \lfloor \eps^{-1}\rfloor$ (guaranteeing that $s_{k-1}-\eps t\ge 0$). Thanks to Lemma \ref{lemma:Q3}, we estimate each nested integral as follows: for all $\ell=1,\ldots, k$, 
	\begin{align}
		\int_{s_{\ell-1}-\eps t}^{s_{\ell-1}} \cQ_\eps^{j_{\ell-1},j_\ell}(s_{\ell-1}-s_\ell)\, \dd s_\ell&=\int_0^\infty \cQ_\eps^{j_{\ell-1},j_\ell}(s)\, \dd s-\int_{t\eps}^\infty \cQ_\eps^{j_{\ell-1},j_\ell}(s)\, \dd s\\
		&\ge \int_0^\infty \cQ_\eps^{j_{\ell-1},j_\ell}(s)\, \dd s-c_3\,t^{-3/2}\eps^{3/2}\\
		&\ge \big(1-Ct^{-3/2}\eps^{3/2}\big)\int_0^\infty \cQ_\eps^{j_{\ell-1},j_\ell}(s)\, \dd s\comma
	\end{align}
	where the first inequality used Lemma \ref{lemma:Q3}, whereas for the last inequality we employed the non-degeneracy of the limits \eqref{eq:b-c-limits} (thus, ensuring $\liminf_{\eps\to 0}\int_0^\infty \cQ_\eps^{i,j}(s)\, \dd s>0$) and chose a sufficiently large constant  $C=C(d)>0$. Inserting these bounds into \eqref{eq:R-i-j-k-lb2}, we obtain \eqref{eq:R-i-j-k-lb}. In conclusion, by combining \eqref{eq:R-i-j-k-ub} and \eqref{eq:R-i-j-k-lb}, we get
	\begin{align}
		\sum_{k=0}^\infty \abs{R_t^{\eps,i,j,k}-R_\infty^{\eps,i,j,k}} &\le 	\sum_{k=1}^{\lfloor \eps^{-1}\rfloor}\tonde{1-\big(1-C't^{-3/2}\eps^{3/2}\big)^k} R_\infty^{\eps,i,j,k} + \sum_{k=\lfloor \eps^{-1}\rfloor +1}^\infty R_\infty^{\eps,i,j,k} \\
		&\le {1-\big(1-C't^{-3/2}\eps^{3/2}\big)^{\lfloor \eps^{-1}\rfloor}} + 2^{-\lfloor \eps^{-1}\rfloor}\comma
	\end{align}
	where the last inequality used \eqref{eq:sum-Q-i-j} and property \eqref{eq:B-12}. In conclusion, we obtain
	\begin{equation}\label{eq:R-t-R-infty}
		\sum_{k=0}^\infty \abs{R_t^{\eps,i,j,k}-R_\infty^{\eps,i,j\purple{,}k}}\xrightarrow{\eps\to 0}0\comma
	\end{equation}
	for all $t>0$ and $i,j=1,\ldots, d$.
	
	We now prove that, for all $i,j=1,\ldots, d$, we have
	\begin{equation}\label{eq:sum-R-i-j-infty}
		\sum_{k=1}^\infty R_\infty^{\eps,i,j,k} \xrightarrow{\eps\to 0}  	 \purple{\mathfrak r^{i,j}}\eqdef \frac1{1-\mathfrak b+\mathfrak c}\times \begin{dcases}
			\mathfrak b+\mathfrak c &\text{if}\ i=j\\
			2\mathfrak c &\text{if}\ i\neq j\comma
		\end{dcases}
	\end{equation}
	where $\mathfrak b, \mathfrak c\in (0,\frac12]$ are the limits in \eqref{eq:b-c-limits}.
	
	Recall the definition of $R_\infty^{\eps,i,j,k}$, $k\in \N$, from \eqref{eq:R-i-j-k-infty}.
	Thanks to Lemma \ref{lemma:Q2}, $R_\infty^{\eps,i,j,k}$ may be precisely determined by counting how many times adjacent indices in the sequence $(j_0,j_1,\ldots,j_{k-1}, j_k)\in \set{1,\ldots, d}^{k+1}$ coincide. For this purpose, let $N(j_0,\ldots, j_k)\in \set{0,1,\ldots, k}$ denote the number of adjacent indices with the same value, i.e.,
	\begin{equation}
		N(j_0,\ldots, j_k)\eqdef \sum_{\ell=1}^k \car_{j_{\ell-1}=j_\ell}\fstop
	\end{equation}
	Then, by \eqref{eq:b-c-eps}, we have
	\begin{equation}\label{eq:sum-b-c-i-j}
		R_\infty^{\eps,i,j,k}=	\sum_{j_0,\ldots, j_k=1}^d \tonde{\mathfrak b_\eps^{N(j_0,\ldots, j_k)}\mathfrak c_\eps^{k-N(j_0,\ldots, j_k)}}\car_{j_0=i}\car_{j_k=j}\fstop
	\end{equation}
	Clearly,  the above quantity depends on $i,j=1,\ldots, d$ only through $\car_{i=j}$; therefore, we may introduce the following shorthand notation: for all $k\in \N$, 
	\begin{equation}
		F_k^\eps\eqdef R_\infty^{\eps,i,j,k}\quad \text{if}\ i=j\comma\qquad G_k^\eps\eqdef R_\infty^{\eps,i,j,k}\quad \text{if}\ i\neq j\fstop
	\end{equation}
	
	When $d\ge 2$, such quantities satisfy 
	\begin{equation}
		F_1^\eps = \mathfrak b_\eps\comma\qquad G_1^\eps = \mathfrak c_\eps\comma
	\end{equation} 
	as well as the following one-step recursive formula: for $k\ge 2$, 
	\begin{eqnarray}
		F_k^\eps &=\ \mathfrak b_\eps\, F_{k-1}^\eps + \tonde{\mathfrak c_\eps+\tonde{d-2}\mathfrak c_\eps} G_{k-1}^\eps &=\ \mathfrak b_\eps\, F_{k-1}^\eps + \tonde{\tfrac12-\mathfrak b_\eps}G_{k-1}^\eps \\
		G_k^\eps &=\  \mathfrak c_\eps\, F_{k-1}^\eps + \tonde{\mathfrak b_\eps+\tonde{d-2}\mathfrak c_\eps} G_{k-1}^\eps &=\ \mathfrak c_\eps\, F_{k-1}^\eps + \tonde{\tfrac12-\mathfrak c_\eps}G_{k-1}^\eps\comma
	\end{eqnarray}
	where for the second set of identities we used relation \eqref{eq:b-c-12}.
	In other words, letting $M_\eps\eqdef\begin{pmatrix}
		\mathfrak b_\eps &\tfrac12-\mathfrak b_\eps\\
		\mathfrak c_\eps &\tfrac12-\mathfrak c_\eps
	\end{pmatrix}$, 	we just obtained
	\begin{equation}
		\begin{pmatrix}
			F_k^\eps\\
			G_k^\eps
		\end{pmatrix} = M_\eps \begin{pmatrix}
			F_{k-1}^\eps\\
			G_{k-1}^\eps
		\end{pmatrix}
		=\ldots = M_\eps^{k-1}\begin{pmatrix}
			\mathfrak b_\eps\\
			\mathfrak c_\eps
		\end{pmatrix}\comma
	\end{equation}
	from which we get
	\begin{equation}
		\sum_{k=1}^\infty M_\eps^{k-1}\begin{pmatrix}
			\mathfrak b_\eps\\
			\mathfrak c_\eps
		\end{pmatrix} = \tonde{\mathds I-M_\eps}^{-1}\begin{pmatrix}
			\mathfrak b_\eps\\
			\mathfrak c_\eps
		\end{pmatrix}\comma\qquad \text{with}\ \mathds I\eqdef \begin{pmatrix}
			1&0\\
			0&1
		\end{pmatrix}\fstop
	\end{equation}
	In conclusion,  since
	\begin{equation}
		\tonde{\mathds I-M_\eps}^{-1}
		=\frac1{1-\mathfrak b_\eps+\mathfrak c_\eps}\begin{pmatrix}
			1+2\mathfrak c_\eps &1-2\mathfrak b_\eps\\
			2\mathfrak c_\eps &2-2\mathfrak b_\eps
		\end{pmatrix}\comma
	\end{equation}
	the left-hand side of \eqref{eq:sum-R-i-j-infty} equals,  depending on whether $i=j$ or $i\neq j$,
	\begin{equation}\label{eq:candidate-a}
		\frac1{1-\mathfrak b_\eps+\mathfrak c_\eps}\times 	\begin{dcases}\mathfrak b_\eps+\mathfrak c_\eps
			&\text{if}\ i=j\\		
			2\mathfrak c_\eps &\text{if}\ i\neq j\fstop
	\end{dcases}\end{equation}
	Taking  $\eps \to 0$, \eqref{eq:b-c-limits} yields \eqref{eq:sum-R-i-j-infty} for $d\ge2$.	
	When $d=1$,   $i\neq j$ is not possible; hence, we have $F_k^\eps = \mathfrak b_\eps^k$ for all $k\in \N$. Since $\mathfrak b_\eps = \frac12$ (see Remark \ref{remark:Q-Q-i-j}), the left-hand side of \eqref{eq:sum-R-i-j-infty} equals $\sum_{k=1}^\infty F_k^\eps = \sum_{k=1}^\infty 2^{-k}=1$. Since $\mathfrak b=\frac12$ and $\mathfrak c=0$ when $d=1$, the right-hand side of \eqref{eq:sum-R-i-j-infty} equals $1$, as desired.

	Now, recall that $\nabla^ju_t = \nabla^jP_t u_0= P_t \nabla^ju_0$, for all $j=1,\ldots, d$, and similarly for the $\eps$-semigroup and corresponding gradients. By the functional CLT for $(X_t^\eps)_{t\ge 0}$, 	the assumptions $u_0\in \cC^2(\T^d)$ with $u_0^\eps = u_0\restr{\T_\eps^d}$ ensure
	\begin{equation}\label{eq:FCLT}
\purple{\mathfrak s_\eps\eqdef}	\sup_{t\ge 0}\sup_{x\in \T_\eps^d}\abs{\tonde{\nabla^{\eps,j}P_t^\eps u_0^\eps(x)}^2-\tonde{\nabla^j u_t(x)}^2}\xrightarrow{\eps\to 0}0\comma\qquad  j=1,\ldots, d\fstop
	\end{equation}

	In view of \eqref{eq:R-t-R-infty}, \eqref{eq:sum-R-i-j-infty} and \eqref{eq:FCLT}, the claim in \eqref{eq:pr-mean-1} follows by a triangle inequality. Indeed, 	recalling the definitions  \eqref{eq:tilde-Pi-eps-k}, \eqref{eq:R-i-j-k}, \eqref{eq:R-i-j-k-infty}, 	 \eqref{eq:Pi-eps-0}, and the uniform boundedness of $\Pi_t^{\eps,j,0}(u_0^\eps)$, we have,  as $\eps \to 0$,
	\begin{align}
		\sum_{k=0}^\infty \tilde \Pi_t^{\eps,i,k}(u_0^\eps)(x) &= \sum_{j=1}^d \Pi_t^{\eps,j,0}(u_0^\eps)(x)\sum_{k=0}^\infty R_t^{\eps,i,j,k}\\
		\eqref{eq:FCLT}\Longrightarrow\quad
		&\approx \sum_{j=1}^d \tonde{\nabla^j u_t(x)}^2\sum_{k=0}^\infty R_t^{\eps,i,j,k}
		\\
		\eqref{eq:R-t-R-infty}\Longrightarrow\quad	&\approx \sum_{j=1}^d \tonde{\nabla^j u_t(x)}^2 \sum_{k=0}^\infty R_\infty^{\eps,i,j,k}\\
		\eqref{eq:sum-R-i-j-infty}\Longrightarrow\quad	&\approx \sum_{j=1}^d \tonde{\nabla^j u_t(x)}^2 \purple{\tonde{\car_{i=j}+\mathfrak r^{i,j}}}\fstop
	\end{align}
	 Observe that the above convergences hold for all $t>0$, $i=1,\ldots, d$, 	 and uniformly over $x\in \T_\eps^d$. This proves \eqref{eq:pr-mean-1} with $\mathfrak a=\purple{\mathfrak r^{i,i}}$, thus, concluding the proof of the proposition.
\end{proof}
\begin{remark}\label{remark:coefficient-a-b-c}
	The proof of Proposition \ref{pr:mean-1} --- in particular,  relations \eqref{eq:sum-R-i-j-infty} and \eqref{eq:candidate-a} --- allows us to express the value $\mathfrak a=\mathfrak a(d)\in (0,\frac12]$ in terms  of the limits $\mathfrak b,\mathfrak c\in (0,\frac12]$ in \eqref{eq:b-c-limits}, combined with the relation \eqref{eq:b-c-12}:
	\begin{equation}\label{eq:a}
		\mathfrak a\eqdef \frac{\mathfrak b+\mathfrak c}{1-\mathfrak b+\mathfrak c}=\frac{1-2(d-2)\,\mathfrak c}{1
			+2d\,\mathfrak c}\fstop
	\end{equation}
\end{remark}

\begin{proof}[Proof of Proposition \ref{pr:mean-2}] Recalling \eqref{eq:Pi-eps-0}, \eqref{eq:Pi-eps-k} and \eqref{eq:tilde-Pi-eps-k}, we have, for all $k\in \N$, 
	\begin{align}
		&\Pi_t^{\eps,i,k}(u_0^\eps)(x)-\tilde \Pi_t^{\eps,i,k}(u_0^\eps)(x)\\
		&=\int_{[0,t]_>^k}\dd s_1\cdots \dd s_{k-1}\sum_{y_1,\ldots, y_{k-1}\in \T_\eps^d}\sum_{j_1,\ldots, j_{k-1}=1}^d\tonde{\prod_{\ell=1}^{k-1}q_{s_{\ell-1}-s_\ell}^{\eps,j_{\ell-1},j_\ell}(y_{\ell-1},y_\ell)}\\
		&\qquad \times \int_0^{s_{k-1}}\dd s_k\sum_{y_k\in \T_\eps^d}\sum_{j_k=1}^d q_{s_{k-1}-s_k}^{\eps,j_{k-1},j_k}(y_{k-1},y_k)\set{\Pi_{s_k}^{\eps,j_k,0}(u_0^\eps)(y_k)-\Pi_t^{\eps,j_k,0}(u_0^\eps)(x)}\fstop
	\end{align}  Further, by $u_0\in \cC^2(\T^d)$, $u_0^\eps =u_0\restr{\T_\eps^d}$ and the functional CLT in \eqref{eq:FCLT}, we get
	\begin{equation}
		\abs{\Pi_{s_k}^{\eps,j,0}(u_0^\eps)(y_k)-\Pi_t^{\eps,j,0}(u_0^\eps)(x)}\le	\purple{2\mathfrak s_\eps}+C_1\abs{P_{s_k} \nabla^ju_0(y_k)-P_t \nabla^ju_0(x)}\le C_2\comma
	\end{equation}
	for some $C_1, C_2>0$ depending only on $u_0\in \cC^2(\T^d)$.
	Hence, the left-hand side of \eqref{eq:pr-mean-2} is bounded above by \purple{a term bounded above by $4\mathfrak s_k$, which, by \eqref{eq:FCLT},  tends to $0$ as $\eps \to 0$, plus}
	\begin{equation}\label{eq:final-K}
		C_1\sum_{k=1}^\infty  K_t^{\eps,k}(u_0)(x)\le 		 \tonde{C_1\sum_{k=1}^{\lceil \abs{\log \eps}\rceil^2}  K_t^{\eps,k}(u_0)(x)  } +C_2\,	 2^{-\lceil \abs{\log \eps}\rceil^2}\comma 	\end{equation}
	where
	\begin{align}
		&	K_t^{\eps,k}(u_0)(x)\eqdef \int_{[-\infty,t]_>^{k-1}}\dd s_1\cdots \dd s_{k-1} \sum_{y_1,\ldots, y_{k-1}\in \T_\eps^d}\sum_{j_0,\ldots, j_{k-1}=1}^d \tonde{\prod_{\ell=1}^{k-1} q_{s_{\ell-1}-s_\ell}^{\eps,j_{\ell-1},j_\ell}(y_{\ell-1},y_\ell)}\\
		&\qquad \times \int_{-\infty}^{s_{k-1}}\dd s_k \sum_{y_k\in \T_\eps^d}\sum_{j_k=1}^d q_{s_{k-1}-s_k}^{\eps,j_{k-1},j_k}(y_{k-1},y_k)\abs{P_{s_k\vee 0} \nabla^{j_k} u_0(y_k)-P_t \nabla^{j_k}u_0(x)}\fstop
		\label{eq:K-def}
	\end{align}
	
	From now on, the proof ingredients are similar to those of Lemma \ref{lemma:short-time} and Proposition \ref{pr:mean-1}. More in detail,  we show, as already done in Proposition \ref{pr:mean-1}, that the integrals appearing in the definition \eqref{eq:K-def} of $K_t^{\eps,k}(u_0)(x)$ may be restricted around their upper extremes, e.g., integrate $s_k$ over $[s_{k-1}-\eps t,s_{\purple{k-1}}]$ rather than on the whole $[0,s_{k-1}]$. Then, we conclude by	exploiting the space-time continuity of $ P_s\nabla^j u_0(x)$ and the localization arguments employed in the proof of Lemma \ref{lemma:short-time}.		We sketch	 this part of the proof below.
	
	By splitting the domain of integration of the first integral in \eqref{eq:K-def} into $\set{s_1<t-\eps t}$ and its complement, we get, for some $C_3=C_3(u_0)>0$, 
	\begin{equation}\label{eq:K-ub}
		K_t^{\eps,k}(u_0)(x)\le C_3\, L_t^{\eps,k} +  K_t^{\eps,k,1}(u_0)(x)\comma
	\end{equation}
	where, for all $m=1,\ldots, k$,
	\begin{align}
		& 	K_t^{\eps,k,m}(u_0)(x)\eqdef \int_{[-\infty,t]_>^k}\dd s_1\cdots \dd s_k \sum_{y_1,\ldots, y_k\in \T_\eps^d}\sum_{j_0,\ldots, j_k=1}^d \tonde{\prod_{\ell=1}^k q_{s_{\ell-1}-s_\ell}^{\eps,j_{\ell-1},j_\ell}(y_{\ell-1},y_\ell)}\\
		&\qquad \times \abs{P_{s_k} \nabla^{j_k} u_0(y_k)-P_t \nabla^{j_k}u_0(x)}\tonde{\prod_{\ell=1}^m\car_{[s_{\ell-1}-\eps t,s_{\ell-1}]}(s_\ell)}\comma
		\label{eq:K-def-2}
	\end{align}
	and 
	\begin{equation}\label{eq:L-t}
		L_t^{\eps,k}\eqdef 
		d\tonde{\int_{t\eps}^\infty \cQ_\eps(s)\, \dd s}\tonde{\int_0^\infty \cQ_\eps(s)\, \dd s}^{k-1}\le dc_3\,t^{-3/2}\eps^{3/2}\, 2^{-(k-1)}	\fstop
	\end{equation}
	Note that in \eqref{eq:L-t} we used Lemmas \ref{lemma:Q1} and \ref{lemma:Q3} to obtain the inequality.	 Moreover, with the notation in \eqref{eq:K-def-2}, we may define $K_t^{\eps,k,0}\eqdef K_t^{\eps,k}$.
	Applying this same strategy to $ K_t^{\eps,k,1}(u_0)(x)$, by splitting the integral with respect to $s_2\in (-\infty,s_1)$ into $\set{s_2<s_1-\eps t}$ and its complement, we get
	\begin{equation}
		K_t^{\eps,k,1}(u_0)(x)\le C_3\, L_t^{\eps,k} + K_t^{\eps,k,2}(u_0)(x)\fstop
	\end{equation}
	Thus, by iterating $k$ times the above inequality, \eqref{eq:K-ub} yields
	\begin{equation}\label{eq:K-t-ub}
		K_t^{\eps,k}(u_0)(x)\le C_3\, k\, L_t^{\eps,k} + K_t^{\eps,k,k}(u_0)(x)\fstop
	\end{equation}
	Since $k\le \lceil \abs{\log \eps}\rceil^2$,  we further get, for all $\rho \in (0,\frac12)$, 
	\begin{align}
		&K_t^{\eps,k,k}(u_0)(x)\le d2^{-k}\bigg\{\max_{j=1,\ldots, d}\sup_{\substack{r,s\ge 0\\\abs{r-s}<\eps \abs{\log \eps}^2}}\norm{P_s\nabla^j u_0-P_r \nabla^j u_0}_{L^\infty(\T^d)}\bigg\}\\
		&\qquad+ d2^{-k}\bigg\{\max_{j=1,\ldots, d}\sup_{s\ge 0}\sup_{\substack{x,y\in \T^d\\
				\abs{x-y}\le\rho}}\abs{P_s\nabla^j u_0(y)-P_s\nabla^j u_0(x)}\bigg\} \\
		&\qquad+ C_4\int_{[0,\eps \abs{\log \eps}^2t]_>^k}\dd s_1\cdots \dd s_k \sum_{\substack{y_1,\ldots, y_k\in \T_\eps^d\\
				\abs{y_k-x}>\rho}}\sum_{j_0,\ldots, j_k=1}^d \tonde{\prod_{\ell=1}^k q_{s_{\ell-1}-s_\ell}^{\eps,j_{\ell-1},j_\ell}(y_{\ell-1},y_\ell)}
		\\
		&\qquad \defeq d2^{-k}\set{\omega(\eps\abs{\log\eps}^2)+\vartheta(\rho)}+C_4\, I_{\eps\abs{\log \eps}^2t}^{\eps,k}(\eps,\rho)
		\fstop
		\label{eq:K-t-k-k-ub}
	\end{align} 
	Note that the time integral above coincides with $I_t^{\eps,k}(\rho_1,\rho_2)$ introduced in \eqref{eq:I} ---  and estimated in \eqref{eq:I-ub} --- with the following choices:
	\begin{equation}
		\rho_1=\eps\comma\qquad \rho_2=\rho\comma\qquad t=\eps\abs{\log \eps}^2 t\comma \qquad n=\lceil \abs{\log \eps}\rceil^2\fstop
	\end{equation}
	As a consequence, summing over  $k=1,\ldots, \lfloor \eps^{-1/2}\rfloor$, \eqref{eq:K-t-ub}, \eqref{eq:L-t},	 \eqref{eq:K-t-k-k-ub} and \eqref{eq:I-ub} yield
	\begin{align}
		&\sum_{k=1}^{\lceil \abs{\log \eps}\rceil^2} K_t^{\eps,k}(u_0)(x)\\
		&\qquad\le C_5 \tonde{t^{-3/2}\eps^{3/2} +\omega(\eps\abs{\log \eps}^2)+\vartheta(\rho)+\eps^{-(d+2)}\exp\tonde{-\frac{C'\rho}{\abs{\log \eps}^3\eps^{1/2}}} }\comma
	\end{align}
	for some $C_5>0$ depending only on $d\ge 1$ and $u_0\in \cC^2(\T^d)$.
	Taking, e.g., $\rho=\eps^{1/3}$, the above bound and the space-time continuity of $P_s\nabla^ju_0$ ensure that the left-hand side above vanishes as $\eps \to 0$, for all $t>0$. The desired claim now follows from \eqref{eq:final-K}. 
\end{proof}
\section{Proof of Theorem \ref{th:flu}}\label{sec:proof-th-1}
In this section, we prove the functional CLT for  the distribution-valued \textit{c\`{a}dl\`{a}g} process $(\cY_t^\eps)_{t\ge 0}$ defined in \eqref{eq:fluctuation-fields}. Let us recall from \eqref{eq:martingale-decomposition}--\eqref{eq:martingale-eps} the decomposition of $(\cY_t^\eps)_{t\ge 0}$ into a drift and a martingale term:
\begin{equation}\label{eq:decomposition-2}
	\cY_t^\eps(f)=\int_0^t \cY_s^\eps(\tfrac12 \Delta_\eps f)\, \dd s+\cM_t^\eps(f)\comma \qquad t \ge 0\comma f \in \cC^\infty(\T^d)\fstop
\end{equation}Observe that, by \eqref{eq:predictable-q-v} and the notation introduced in \eqref{eq:Gamma-eps}, $(\cU_t^\eps)_{t\ge 0}$ defined as
\begin{equation}\label{eq:mart-2}
	\cU_t^\eps(f)\eqdef \cM_t^\eps(f)^2-\varGamma_t^\eps(\tfrac12 \nabla^\eps f)\comma\qquad t \ge 0\comma f \in \cC^\infty(\T^d)\comma
\end{equation} 
is also a distribution-valued martingale (with respect to the natural filtration of $(\cY_t^\eps)_{t\ge 0}$). Recall that all these fields depend on the initial conditions \begin{equation}u_0\in \cC^2(\T^d)\qquad \text{and}\qquad u_0^\eps \equiv u_0\restr{\T_\eps^d}\in \R^{\T_\eps^d}\comma 
\end{equation} which we fix all throughout this section.

As already mentioned in Section \ref{sec:sketch-proof+LLN}, the proof of Theorem \ref{th:flu} may be divided into steps (i)--(iii): tightness of the sequence, continuity of the limit points, and characterization of the limit, respectively. As we will see, these three steps build on Theorem \ref{th:LLN} and  two main estimates, which we  present and prove in the next subsection. 

\subsection{Main estimates}
Define, for all $g\in \R^{\T_\eps}$, 
$
\norm{g}_{{\rm Lip}(\T_\eps^d)}\eqdef \max_{i=1,\ldots, d}\ttnorm{\nabla^{\eps,i}g}_{L^\infty(\T_\eps^d)}$.
This definition naturally extends to functions $f\in \cC^\infty(\T^d)$. 

\begin{lemma}[Second moments of fields]\label{lemma:second-moment-fields}
	For all $f\in \cC^\infty(\T^d)$ and $t\ge 0$, we have
	\begin{align}\label{eq:second-moment-field}
		\E\big[\cY_t^\eps(f)^2\big]&\le \norm{f}_{{\rm Lip}(\T_\eps^d)}^2\norm{u_0^\eps}_{L^\infty(\T_\eps^d)}^2
		\\
		\E\big[\cM_t^\eps(f)^2\big]&\le \norm{f }_{{\rm Lip}(\T_\eps^d)}^2 \norm{u_0^\eps}_{L^\infty(\T_\eps^d)}^2\fstop
		\label{eq:ub-pqv}
	\end{align}
\end{lemma}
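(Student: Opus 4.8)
The plan is to prove the bound \eqref{eq:ub-pqv} on the martingale first, and then deduce \eqref{eq:second-moment-field} from it via the decomposition \eqref{eq:decomposition-2} together with a Grönwall-type argument — or, more efficiently, to observe that both bounds follow from controlling $\E[\langle \cM^\eps(f)\rangle_\infty]$ using the Aldous--Lanoue identity \eqref{eq:AL-identity}. Concretely, since $\cM_t^\eps(f)$ is a square-integrable martingale with $\cM_0^\eps(f)=0$, we have $\E[\cM_t^\eps(f)^2]=\E[\langle \cM^\eps(f)\rangle_t]$, and by \eqref{eq:predictable-q-v} this equals $\E[\varGamma_t^\eps(\tfrac12\nabla^\eps f)]$ where the test "function" is $g^i = (\tfrac12\nabla^{\eps,i}f)^2$. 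First I would bound pointwise $(\tfrac12\nabla^{\eps,i}f(x))^2 \le \tfrac14\norm{f}_{{\rm Lip}(\T_\eps^d)}^2$, so that
\begin{equation}
	\E\big[\cM_t^\eps(f)^2\big] \le \tfrac14\norm{f}_{{\rm Lip}(\T_\eps^d)}^2 \int_0^t \eps^d\sum_{x\in\T_\eps^d}\sum_{i=1}^d \E\big[(\nabla^{\eps,i}u_s^\eps(x))^2\big]\,\dd s = \tfrac14\norm{f}_{{\rm Lip}(\T_\eps^d)}^2\int_0^t \E\big[\norm{u_s^\eps}_{H^1(\T_\eps^d)}^2\big]\,\dd s.
\end{equation}
Then by the Aldous--Lanoue identity \eqref{eq:AL-identity}, $\int_0^t \E[\norm{u_s^\eps}_{H^1(\T_\eps^d)}^2]\,\dd s = 2\big(\norm{u_0^\eps}_{L^2(\T_\eps^d)}^2 - \E[\norm{u_t^\eps}_{L^2(\T_\eps^d)}^2]\big) \le 2\norm{u_0^\eps}_{L^2(\T_\eps^d)}^2 \le 2\norm{u_0^\eps}_{L^\infty(\T_\eps^d)}^2$, which gives \eqref{eq:ub-pqv} (the constant $\tfrac12$ is even better than claimed, so the stated bound holds a fortiori).

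For \eqref{eq:second-moment-field}, I would test the decomposition \eqref{eq:decomposition-2} and use $(a+b)^2\le 2a^2+2b^2$ together with Cauchy--Schwarz on the time integral: $\cY_t^\eps(f)^2 \le 2\cM_t^\eps(f)^2 + 2t\int_0^t \cY_s^\eps(\tfrac12\Delta_\eps f)^2\,\dd s$. Taking expectations, setting $\phi(t)\eqdef \E[\cY_t^\eps(f)^2]$, and using \eqref{eq:ub-pqv} (applied with $f$, whose Lipschitz norm controls everything) gives $\phi(t) \le C_f + C\int_0^t \phi(s)\,\dd s$ for constants depending on $\norm{f}_{{\rm Lip}}$, $\norm{\Delta_\eps f}$ (hence on higher derivatives of $f$ via $\norm{\Delta_\eps f}_\infty \le C\norm{f}_{\cC^2}$) and $\norm{u_0^\eps}_{L^\infty}$; Grönwall then yields a bound, but with an $e^{Ct}$ factor and dependence on $\norm{f}_{\cC^2}$ rather than just $\norm{f}_{{\rm Lip}}$. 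The main obstacle — and the reason I expect the authors' proof to be cleverer — is that the claimed bound \eqref{eq:second-moment-field} has \emph{no} time dependence and depends only on $\norm{f}_{{\rm Lip}(\T_\eps^d)}$, not on $\norm{f}_{\cC^2}$. To get this sharp form, rather than Grönwall I would instead compute $\E[\cY_t^\eps(f)^2]$ directly via Itô/Dynkin applied to $\cY_t^\eps(f)^2$: $\dd\,\E[\cY_t^\eps(f)^2] = 2\E[\cY_t^\eps(f)\cY_t^\eps(\tfrac12\Delta_\eps f)]\,\dd t + \dd\,\E[\langle\cM^\eps(f)\rangle_t]$, and then recognize that the drift term, after summation by parts, reorganizes into (minus) a Dirichlet-type form that, combined with the $H^1$ martingale bracket, telescopes — essentially an analogue of \eqref{eq:AL-identity} at the level of the fluctuation fields paired against $f$. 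The cleanest route is probably to write $\cY_t^\eps(f) = \theta_\eps\langle u_t^\eps - \E^\eps u_t^\eps, f\rangle_\eps$ and expand $\E[\langle u_t^\eps-\E^\eps u_t^\eps, f\rangle_\eps^2]$ using the explicit mild-solution/second-moment machinery of Proposition \ref{pr:second-moments} and the identity \eqref{eq:B-12}; the factor $\theta_\eps^2 = \eps^{-(d+2)}$ is exactly compensated by an $\eps^{d+2}$ coming from $\eps^d$ (spatial normalization) times $\eps^2$ (one power from the noise intensity structure in \eqref{eq:B}), leaving $\norm{\nabla^\eps f}_{L^\infty}^2\norm{u_0^\eps}_{L^\infty}^2$ after bounding the resulting geometric series by $\sum_k 2^{-k}=2$ — hence the $\norm{f}_{{\rm Lip}}^2$ dependence with no time growth. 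I expect the authors to follow this second, more structural computation; I would present it in that form.
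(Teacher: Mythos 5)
Your proposal is essentially correct and, for the hard half, lands on the same mechanism as the paper; the only notable difference is the order of deduction. You prove \eqref{eq:ub-pqv} first by writing $\E[\cM_t^\eps(f)^2]=\E[\varGamma_t^\eps(\tfrac12\nabla^\eps f)]$, bounding the test weight pointwise by $\norm{f}_{{\rm Lip}(\T_\eps^d)}^2/4$, and closing with the Aldous--Lanoue identity \eqref{eq:AL-identity} --- this is exactly right. The paper goes the other way: it first computes $\E[\cY_t^\eps(f)^2]$ explicitly by substituting a \emph{single} mild-solution representation of $u_t^\eps$ into the centered field, applying the It\^o isometry for the compensated Poisson drivers, and arriving at $\eps^d\sum_z\sum_i\int_0^t\E[(\nabla^{\eps,i}u_s^\eps(z))^2](\tfrac12 P_{t-s}^\eps\nabla^{\eps,i}f(z))^2\,\dd s$; since $\norm{P_{t-s}^\eps\nabla^{\eps,i}f}_{L^\infty}\le\norm{f}_{{\rm Lip}(\T_\eps^d)}$ by $L^\infty$-contractivity of the semigroup, the Aldous--Lanoue step closes \eqref{eq:second-moment-field} and then \eqref{eq:ub-pqv} drops out as the same expression with $P_{t-s}^\eps\nabla^{\eps,i}f$ replaced by $\nabla^{\eps,i}f$. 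You correctly diagnose why Gr\"onwall is too weak (time growth and $\cC^2$-dependence), and your ``compensation of $\theta_\eps^2$ by $\eps^{d+2}$'' heuristic is on target. The one place you over-complicate is the final step for \eqref{eq:second-moment-field}: invoking the full iterated-series machinery of Proposition \ref{pr:second-moments} and the geometric sum via \eqref{eq:B-12} is not needed here and, if you tried to bound $\E[\norm{u_s^\eps}_{H^1(\T_\eps^d)}^2]$ pointwise in time by Lemma \ref{lemma:H1-contractivity} rather than integrating it via \eqref{eq:AL-identity}, you would only get $\norm{u_0^\eps}_{H^1(\T_\eps^d)}^2\approx\eps^{-2}$, which is too weak. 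The correct closing step is precisely the Aldous--Lanoue time-integration you already used in your \eqref{eq:ub-pqv} argument, applied after the one-step mild-solution expansion --- trust your own cleaner computation.
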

\begin{proof}
	We start with the proof of \eqref{eq:second-moment-field}. By expanding the square and taking expectation, we obtain
	\begin{equation}\label{eq:square}
		\E\big[\cY_t^\eps(f)^2\big] = \theta_\eps^2\,\eps^{2d}\sum_{x,y\in \T_\eps^d}\set{\E\big[u_t^\eps(x)u_t^\eps(y)\big]-\E\big[u_t^\eps(x)\big]\E\big[u_t^\eps(y)\big]}f(x)f(y)\fstop
	\end{equation}
	Writing $u_t^\eps(x)$ as the mild solution of \eqref{eq:SDE-poisson-compensated}, we get
	\begin{align}
		u_t^\eps(x)&= P_t^\eps u_0^\eps(x)+\sum_{i=1}^d \frac{\eps^2}2\int_0^t P_{t-s}^\eps \nabla_*^{\eps,i}\tonde{\dd \bar N_s^{\eps,i}\,\nabla^{\eps,i}u_{s^-}^\eps}(x)\\
		&= P_t^\eps u_0^\eps(x) +\sum_{i=1}^d \frac{\eps^2}2 \int_0^t \sum_{z\in \T_\eps^d} {\nabla_*^{\eps,i}p_{t-s}^\eps(\emparg,z)(x)}\,{\nabla^{\eps,i}\,u_{s^-}^\eps(z)\, \dd \bar N_s^{\eps,i}(z) }\fstop
	\end{align}
	\purple{Remark that the notation $\nabla_*^{\eps,i}p_{t-s}^\eps(\emparg,z)(x)$ is to emphasize that the discrete derivative acts on the $x$-variable.}
	Since $\bar N^\eps=(\bar N_t^{\eps,i}(x))_{t\ge 0,\,x\in \T_\eps^d,\,i=1,\ldots, d}$ is a family of i.i.d.\ martingales satisfying $\E[(\bar N_t^{\eps,i}(x))^2]=\eps^{-2}\,t$, we obtain
	\begin{align}
		&\E\big[u_t^\eps(x)u_t^\eps(y)\big]=\E\big[u_t^\eps(x)\big]\E\big[u_t^\eps(y)\big] \\
		&\qquad+ \sum_{i=1}^d \frac{\eps^2}{4}\int_0^t \sum_{z\in \T_\eps^d}\E\big[\tonde{\nabla^{\eps,i}u_s^\eps(z)}^2\big]\tonde{\nabla_*^{\eps,i}p_{t-s}^\eps(\emparg,z)(x)\,\nabla_*^{\eps,i}p_{t-s}^\eps(\emparg,z)(y)} \dd s\fstop 
	\end{align}
	Inserting this identity into \eqref{eq:square} and recalling that $\theta_\eps=\eps^{-(d/2+1)}$, we get
	\begin{align}
		\E\big[\cY_t^\eps(f)^2\big]&= \frac{\eps^d}4\sum_{z\in \T_\eps^d}\sum_{i=1}^d\int_0^t \E\big[\tonde{\nabla^{\eps,i}u_s^\eps(z)}^2\big]  \tonde{\sum_{x\in \T_\eps^d}\tonde{\nabla_*^{\eps,i}p_{t-s}^\eps(\emparg,z)(x)} f(x)}^2 
		\dd s\\		&= \eps^d \sum_{z\in \T_\eps^d}\sum_{i=1}^d \int_0^t \E\big[\tonde{\nabla^{\eps,i}u_s^\eps(z)}^2\big] \tonde{\tfrac12P_{t-s}^\eps\nabla^{\eps,i} f(z)}^2 \dd s\fstop
		\label{eq:var-explicit}
	\end{align}
	By the estimate $\ttnorm{P_{t-s}^\eps \nabla^{\eps,i}f}_{L^\infty(\T_\eps^d)}\le \ttnorm{\nabla^{\eps,i}f}_{L^\infty(\T_\eps^d)}\le \norm{f}_{{\rm Lip}(\T_\eps^d)}$, 
	\begin{equation}\label{eq:final-second-moment-field}
		\E\big[\cY_t^\eps(f)^2\big]\le \frac12\norm{f}_{{\rm Lip}(\T_\eps^d)}^2\int_0^t \frac12\,\E\big[\norm{u_s^\eps}_{H^1(\T_\eps^d)}^2\big]\dd s\le \frac12\norm{f}_{{\rm Lip}(\T_\eps^d)}^2 \norm{u_0^\eps}_{L^2(\T_\eps^d)}^2\comma
	\end{equation}
	where the second inequality used Aldous-Lanoue identity \eqref{eq:AL-identity}. 
	This  yields \eqref{eq:second-moment-field}.	
	
	For what concerns \eqref{eq:ub-pqv}, since the process in  \eqref{eq:mart-2} is a mean-zero martingale, we get
	\begin{equation}
		\E\big[\cM_t^\eps(f)^2\big]=\E\big[\varGamma_t^\eps(\tfrac12\nabla^\eps f)\big]\fstop
	\end{equation} Noting $\varGamma_t^\eps(\frac12 \nabla f)$ equals  \eqref{eq:var-explicit} with $P_{t-s}^\eps \nabla^{\eps,i}f$ replaced by $\nabla^{\eps,i}f$ (cf.\ \eqref{eq:Gamma-eps}), 	the  argument leading from \eqref{eq:var-explicit} to \eqref{eq:final-second-moment-field} proves \eqref{eq:ub-pqv}.	
\end{proof}

\begin{lemma}[Size of jumps]\label{lemma:jump-sizes}For all $f\in \cC^\infty(\T^d)$, 
	we have
	\begin{equation}
		\E\big[\sup_{t\ge 0}\abs{\cY_t^\eps(f)-\cY_{t^-}^\eps(f)}\big]\le \eps^{d/2}\norm{f}_{{\rm Lip}(\T_\eps^d)}\norm{u_0^\eps}_{L^\infty(\T_\eps^d)}\fstop
	\end{equation}
\end{lemma}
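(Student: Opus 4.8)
The plan is to establish the inequality \emph{pathwise} ($\P^\eps$-a.s.), which immediately yields the bound in expectation. The key preliminary observation is that $t\mapsto\E^\eps[u_t^\eps(x)]$ is continuous: taking expectations in the compensated SDE \eqref{eq:SDE-poisson-compensated} and using that the compensated-Poisson term is a mean-zero martingale shows it solves $\partial_t\E^\eps[u_t^\eps]=\tfrac12\Delta_\eps\E^\eps[u_t^\eps]$. Hence, recalling \eqref{eq:fluctuation-fields}, the jumps of $\cY_t^\eps(f)$ coincide with those of $\theta_\eps\eps^d\sum_{x}u_t^\eps(x)f(x)$. Moreover, by independence of the clocks in \eqref{eq:poisson-process}, $\P^\eps$-a.s.\ no two of them ring simultaneously and none rings twice at the same instant; thus at every jump time exactly one edge $\{x,x+\eps e_i\}$ is updated, exactly once.

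Next I would compute the corresponding jump. At such an update only $u_{t^-}^\eps(x)$ and $u_{t^-}^\eps(x+\eps e_i)$ change, both becoming their average, so, writing $a=u_{t^-}^\eps(x)$ and $b=u_{t^-}^\eps(x+\eps e_i)$,
\begin{equation}
\cY_t^\eps(f)-\cY_{t^-}^\eps(f)=\theta_\eps\eps^d\Big(\tfrac{a+b}{2}\big(f(x)+f(x+\eps e_i)\big)-af(x)-bf(x+\eps e_i)\Big)=\tfrac{\theta_\eps\eps^d}{2}\,(a-b)\big(f(x+\eps e_i)-f(x)\big).
\end{equation}
Recognizing $a-b=-\eps\,\nabla^{\eps,i}u_{t^-}^\eps(x)$ and $f(x+\eps e_i)-f(x)=\eps\,\nabla^{\eps,i}f(x)$, and using $\theta_\eps=\eps^{-(d/2+1)}$ so that $\theta_\eps\eps^{d+2}=\eps^{d/2+1}$, this jump equals $-\tfrac{\eps^{d/2+1}}{2}\,\nabla^{\eps,i}u_{t^-}^\eps(x)\,\nabla^{\eps,i}f(x)$.

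It then remains to bound the two gradient factors. For the test function, $\abs{\nabla^{\eps,i}f(x)}\le\norm{f}_{{\rm Lip}(\T_\eps^d)}$ by definition. For the averaging process, the crude bound $\abs{\nabla^{\eps,i}u_r^\eps(x)}\le2\eps^{-1}\norm{u_r^\eps}_{L^\infty(\T_\eps^d)}$ together with the $\P^\eps$-a.s.\ monotonicity $\norm{u_r^\eps}_{L^\infty(\T_\eps^d)}\le\norm{u_0^\eps}_{L^\infty(\T_\eps^d)}$ (already used in the proof of Lemma \ref{lemma:varLambda-1}; it holds because each update replaces two values by their mean) gives $\abs{\nabla^{\eps,i}u_{t^-}^\eps(x)}\le2\eps^{-1}\norm{u_0^\eps}_{L^\infty(\T_\eps^d)}$. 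Combining the three displays, every jump of $\cY^\eps(f)$ is $\P^\eps$-a.s.\ at most $\eps^{d/2}\norm{f}_{{\rm Lip}(\T_\eps^d)}\norm{u_0^\eps}_{L^\infty(\T_\eps^d)}$ in absolute value; taking the supremum over $t$ and then the expectation proves the lemma. I do not expect any real obstacle here: the only mildly delicate point is the reduction, via the a.s.\ absence of simultaneous Poisson jumps, to the single-edge computation, and everything else is an elementary algebraic identity together with the sup-norm contractivity already exploited in the paper.
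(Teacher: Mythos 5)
Your proof is correct and follows essentially the same route as the paper's: reduce to a single-edge update, compute the jump algebraically, and bound the two factors by $\eps\norm{f}_{{\rm Lip}(\T_\eps^d)}$ and $2\norm{u_0^\eps}_{L^\infty(\T_\eps^d)}$ via $L^\infty$-monotonicity. The only difference is that you make explicit the (true, but implicit in the paper) observation that $t\mapsto\E^\eps[u_t^\eps(x)]$ is continuous, so that centering does not affect the jumps.
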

\begin{proof}
	By the Poisson nature of the averaging dynamics, there is $\P$-a.s.\ at most one  update at the time. Suppose to observe an update  over the nearest neighbor vertices $x$ and $y=x+\eps e_i\in \T_\eps^d$ at time $t\ge 0$; then, 
	\begin{align}
		&\abs{\cY_t^\eps(f)-\cY_{t^-}^\eps(f)}\\
		&\qquad= \theta_\eps\, \eps^d \abs{\tfrac12\tonde{u_{t^-}^\eps(x)+u_{t^-}(y)}\tonde{f(x)+f(y)}-u_{t^-}^\eps(x)\, f(x)-u_{t^-}^\eps(y)\, f(y)}\\
		&\qquad= \theta_\eps\, \eps^d\abs{\tfrac12\tonde{u_{t^-}^\eps(x)-u_{t^-}^\eps(y)}\tonde{f(x)-f(y)}}\\
		&\qquad\le \theta_\eps\, \eps^{d+1}\norm{f}_{{\rm Lip}(\T_\eps^d)}\norm{u_{t^-}^\eps}_{L^\infty(\T_\eps^d)}\\
		&\qquad \le \theta_\eps\, \eps^{d+1}\norm{f}_{{\rm Lip}(\T_\eps^d)}\norm{u_0^\eps}_{L^\infty(\T_\eps^d)}\comma
	\end{align}
	where the first inequality used $\abs{f(x)-f(y)}\le	\purple{\eps} \ttnorm{f}_{{\rm Lip}(\T_\eps^d)}$, 	while the second one used the monotonicity $\norm{u_s^\eps}_{L^\infty(\T_\eps^d)}\le \norm{u_0^\eps}_{L^\infty(\T_\eps^d)}$. Recalling  $\theta_\eps = \eps^{-(d/2+1)}$ concludes the proof.	
\end{proof}

\subsection{Martingale convergence theorem}
In view of the martingale decomposition in \eqref{eq:decomposition-2}--\eqref{eq:mart-2} and the conclusions of Theorem \ref{th:LLN} and Lemma \ref{lemma:jump-sizes}, the classical martingale convergence theorem (see, e.g, \cite[Theorem 7.1.9, p.\ 339]{ethier_kurtz_1986_Markov}) applies to our case. For the reader's convenience, we collect its consequences in the following proposition.
\begin{proposition}\label{pr:mart-conv}  Recall  \eqref{eq:Gamma}.
	Then, for  all $f\in \cC^\infty(\T^d)$, there exists a unique  (in distribution) real-valued square-integrable continuous martingale $M^f=(M_t^f)_{t\ge 0}$  (with respect to its natural filtration) with $M_0^f=0$,  predictable quadratic variation equal to	 $\varGamma_t(\tfrac12 \nabla f)$, and Gaussian independent increments.	Moreover, recalling \eqref{eq:martingale-eps}, \eqref{eq:predictable-q-v}, and  \eqref{eq:Gamma-eps}, we have, for all $f\in \cC^\infty(\T^d)$,
	\begin{equation}\label{eq:mart-conv}
		(\cM_t^\eps(f))_{t\ge 0} \xRightarrow{\eps\to 0} M_t^f\comma\qquad \text{in}\ \cD([0,\infty);\R)\fstop
	\end{equation}
\end{proposition}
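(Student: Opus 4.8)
The plan is to deduce \eqref{eq:mart-conv}, together with the asserted properties of the limit, from the martingale functional central limit theorem in the form of \cite[Theorem 7.1.9, p.\ 339]{ethier_kurtz_1986_Markov}, using as inputs Theorem \ref{th:LLN} (convergence of the predictable quadratic variations) and Lemma \ref{lemma:jump-sizes} (vanishing of the jumps). First I would identify the candidate limit. By \eqref{eq:Gamma} and the fact that $\mathfrak a\in(0,1]$ makes all the coefficients $(1+\mathfrak a)\car_{i=j}+\tfrac{1-\mathfrak a}{d-1}\car_{i\neq j}$ non-negative, the map $t\mapsto\varGamma_t(\tfrac12\nabla f)$ is continuous, non-decreasing, and vanishes at $t=0$. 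Hence, by the Dambis--Dubins--Schwarz time-change theorem, any continuous square-integrable martingale $M^f$ with $M_0^f=0$ and predictable quadratic variation $\langle M^f\rangle_t=\varGamma_t(\tfrac12\nabla f)$ is of the form $M_t^f=B_{\varGamma_t(\frac12\nabla f)}$ for a standard Brownian motion $B$; in particular it is Gaussian with independent increments, and its law is uniquely determined. This settles the first half of the statement.

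For the convergence, recall from \eqref{eq:martingale-eps}, \eqref{eq:predictable-q-v} that $(\cM_t^\eps(f))_{t\ge0}$ is a square-integrable c\`{a}dl\`{a}g martingale with $\cM_0^\eps(f)=0$ and $\langle\cM^\eps(f)\rangle_t=\varGamma_t^\eps(\tfrac12\nabla^\eps f)$, and that $\sup_\eps\sup_{t\ge0}\E[\cM_t^\eps(f)^2]<\infty$ by Lemma \ref{lemma:second-moment-fields}. To invoke \cite[Theorem 7.1.9]{ethier_kurtz_1986_Markov} I would check: (a) $\langle\cM^\eps(f)\rangle_t\to\varGamma_t(\tfrac12\nabla f)$ in probability for each $t\ge0$; (b) $\E\big[\sup_{0\le s\le t}\abs{\cM_s^\eps(f)-\cM_{s^-}^\eps(f)}\big]\to0$ for each $t\ge0$. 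Claim (b) is essentially Lemma \ref{lemma:jump-sizes}: the drift part of the decomposition \eqref{eq:decomposition-2} is continuous, so the jumps of $\cM^\eps(f)$ coincide with those of $\cY^\eps(f)$, and the proof of Lemma \ref{lemma:jump-sizes} bounds \emph{every} jump of $\cY^\eps(f)$ deterministically by $\eps^{d/2}\norm{f}_{{\rm Lip}(\T_\eps^d)}\norm{u_0^\eps}_{L^\infty(\T_\eps^d)}$; hence any of the standard formulations of this hypothesis is met.

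The only point in claim (a) that is not literally Theorem \ref{th:LLN} is that the test field $\tfrac12\nabla^\eps f$ entering $\varGamma_t^\eps(\tfrac12\nabla^\eps f)$ depends on $\eps$, whereas Theorem \ref{th:LLN} takes a fixed $g\in(\cC([0,\infty)\times\T^d))^d$. I would absorb this by comparison with $g=(g^i)_{i=1,\ldots,d}$, $g^i\eqdef\tfrac14(\partial_i f)^2$: a Taylor expansion gives $\sup_{x\in\T_\eps^d}\abs{\tfrac14(\nabla^{\eps,i}f(x))^2-g^i(x)}\le C(f)\,\eps$, so that, using $(\nabla^{\eps,i}u_s^\eps(x))^2\ge0$ and the Aldous--Lanoue identity \eqref{eq:AL-identity} (which yields $\int_0^t\E[\norm{u_s^\eps}_{H^1(\T_\eps^d)}^2]\,\dd s\le 2\norm{u_0^\eps}_{L^\infty(\T_\eps^d)}^2$),
\begin{equation}
	\E\big[\abs{\varGamma_t^\eps(\tfrac12\nabla^\eps f)-\varGamma_t^\eps(g)}\big]\le C(f)\,\eps\int_0^t\E\big[\norm{u_s^\eps}_{H^1(\T_\eps^d)}^2\big]\,\dd s\le C'(f)\,\eps\xrightarrow{\eps\to0}0\fstop
\end{equation}
On the other hand, Theorem \ref{th:LLN} gives $\E[(\varGamma_t^\eps(g)-\varGamma_t(g))^2]\to0$, and by \eqref{eq:Gamma} and the choice of $g$ one has $\varGamma_t(g)=\varGamma_t(\tfrac12\nabla f)$; combining, $\langle\cM^\eps(f)\rangle_t\to\varGamma_t(\tfrac12\nabla f)$ in $L^1$, hence in probability, which is (a). With (a) and (b) verified, \cite[Theorem 7.1.9]{ethier_kurtz_1986_Markov} yields \eqref{eq:mart-conv} towards the Gaussian martingale identified in the first paragraph. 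I do not anticipate a genuine obstacle in this proposition: its entire substance sits in Theorem \ref{th:LLN}, and the only care required is the harmless bookkeeping that replaces the $\eps$-dependent discrete gradient of the test function by its continuum counterpart.
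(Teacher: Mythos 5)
Your proposal is correct and follows essentially the same route as the paper's proof: invoke a martingale functional CLT from Ethier--Kurtz after checking (a) convergence in probability of the predictable quadratic variations, handled by splitting the $\eps$-dependence of the discrete test field away from the convergence supplied by Theorem \ref{th:LLN}, and (b) uniform smallness of the jumps via Lemma \ref{lemma:jump-sizes} and the observation that the drift in \eqref{eq:decomposition-2} is continuous. The only cosmetic difference is that you establish existence and uniqueness of $M^f$ via Dambis--Dubins--Schwarz, while the paper cites \cite[Theorem~7.1.1]{ethier_kurtz_1986_Markov} directly (and uses Theorem~7.1.4 rather than 7.1.9 for the convergence); both choices are standard and equivalent here.
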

\begin{proof} Since $(\varGamma_t(\frac12 \nabla f))_{t\ge 0}$ is a continuous deterministic non-negative function, \cite[Theorem 7.1.1, p.\ 338]{ethier_kurtz_1986_Markov} proves the first assertion on the existence and uniqueness of the martingale $(M_t^f)_{t\ge 0}$.
	
	As for the claim in \eqref{eq:mart-conv}, it suffices to observe that the hypotheses of \cite[Theorem 7.1.4, p.\ 339]{ethier_kurtz_1986_Markov}, as well as conditions (1.16)--(1.19) therein, hold true in our case. Indeed, the predictable quadratic variations  $(\varGamma_t^\eps(\frac12\nabla^\eps f))_{t\ge 0}$ of $(\cM_t^\eps(f))_{t\ge 0}$ are continuous non-negative processes. Hence, the hypotheses and conditions (1.16) and (1.18) are fulfilled. Condition (1.17) therein follows from  \begin{equation}\label{eq:jumps}
		\abs{\cY_t^\eps(f)-\cY_{t^-}^\eps(f)}=\abs{\cM_t^\eps(f)-\cM_{t^-}^\eps(f)}\comma
	\end{equation} which holds because the drift term in \eqref{eq:decomposition-2} is continuous, and Lemma \ref{lemma:jump-sizes}. Finally, condition (1.19) is a consequence of the triangle inequality
	\begin{align}
		\abs{\varGamma_t^\eps(\tfrac12 \nabla^\eps f)-\varGamma_t(\tfrac12 \nabla f)}&\le \abs{\varGamma_t^\eps(\tfrac12 \nabla^\eps f)-\varGamma_t^\eps(\tfrac12 \nabla f)}+\abs{\varGamma_t^\eps(\tfrac12\nabla f)-\varGamma_t(\tfrac12 \nabla f)}\\
		& \le \ttnorm{f}_{{\rm Lip}(\T_\eps^d)}\max_{i=1,\ldots, d}\ttnorm{ \nabla^{\eps,i} f- \nabla^i f}_{L^\infty(\T_\eps^d)}\int_0^t \norm{u_s^\eps}_{H^1(\T_\eps^d)}^2\, \dd s\\
		&\qquad +
		\abs{\varGamma_t^\eps(\tfrac12\nabla f)-\varGamma_t(\tfrac12 \nabla f)} \comma
	\end{align}
	taking expectation,
	Aldous-Lanoue identity \eqref{eq:AL-identity}, and Theorem \ref{th:LLN}. This concludes the proof of the proposition.	
\end{proof}
\subsection{Tightness and continuity of limits} In view of the decomposition \eqref{eq:decomposition-2}, tightness and continuity of the limit points for $(\cY_t^\eps)_{t\ge 0}$ in $\cD([0,\infty);H^{-\alpha}(\T^d))$ is equivalent to the same property for the corresponding drift and martingale terms.
Since we  already established convergence for the martingale when tested against smooth test functions (Proposition \ref{pr:mart-conv}), the tightness proof for this term may be simplified. For this purpose, let us start by recalling from \cite{billingsley_convergence_1999} a useful characterization of tightness with continuous limit points for general \textit{c\`{a}dl\`{a}g} processes.

\begin{proposition}[Tightness \& continuous limits]\label{pr:tightness-general} Let $(\Xi,\norm{\emparg})$ be a Banach space. A sequence of $\Xi$-valued \emph{c\`{a}dl\`{a}g} processes $((Y_t^\eps)_{t\ge 0})_\eps$ is tight in $\cD([0,\infty);\Xi)$ and such that all limits are continuous if and only if the following three conditions hold true: for all $T>0$ and $\gamma> 0$, 
	\begin{equation}\label{eq:tightness-cond-1}
		\lim_{\zeta\to \infty}\limsup_{\eps \to 0}\P\bigg(\sup_{t\in [0,T]}\norm{Y_t^\eps}>\zeta\bigg)=0\comma
	\end{equation}
	\begin{equation}
		\label{eq:tightness-cond-2}
		\lim_{\delta \to 0}\limsup_{\eps \to 0}\P\bigg(\sup_{\substack{s,t\in [0,T]\\
				\abs{t-s}<\delta}}\norm{Y_t^\eps-Y_s^\eps}>\gamma\bigg)=0\comma
	\end{equation}
	and
	\begin{equation}
		\label{eq:tightness-cond-3}
		\lim_{\eps \to 0}\P\bigg(\sup_{t\in [0,T]}\norm{Y_t^\eps-Y_{t^-}^\eps}>\gamma\bigg)=0\fstop
	\end{equation}
\end{proposition}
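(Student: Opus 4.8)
The plan is to localize to a finite time horizon and then combine Prokhorov's theorem with the Arzel\`a--Ascoli-type characterization of relatively compact subsets of $\cD([0,T];\Xi)$. Since for every $T>0$ the restriction map $\cD([0,\infty);\Xi)\to\cD([0,T];\Xi)$ is continuous and a path is continuous on $[0,\infty)$ exactly when each of its restrictions to $[0,T]$ is, it suffices to prove the asserted equivalence with $\cD([0,T];\Xi)$ in place of $\cD([0,\infty);\Xi)$, for each fixed $T>0$. Throughout I would use the standard fact that $K\subset\cD([0,T];\Xi)$ is relatively compact if and only if (a) $\sup_{x\in K}\sup_{t\le T}\norm{x(t)}<\infty$ and, for every $t$ in a dense subset of $[0,T]$, $\{x(t):x\in K\}$ is relatively compact in $\Xi$, and (b) $\lim_{\delta\to0}\sup_{x\in K}w'(x,\delta,T)=0$, where $w'$ is the Skorokhod c\`adl\`ag modulus; and that c\`adl\`ag convergence to a \emph{continuous} limit is equivalent to uniform convergence on $[0,T]$.

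For the ``if'' direction, assume (1)--(3). The elementary bound $w'(x,\delta,T)\le w(x,2\delta,T)$, with $w(x,\delta,T)\eqdef\sup\{\norm{x(t)-x(s)}:s,t\le T,\ \abs{t-s}\le\delta\}$ (obtained from the uniform partition of mesh $2\delta$), shows that (2) controls the modulus (b) uniformly in $\eps$ outside an event of arbitrarily small probability; (1) supplies the uniform sup-bound in (a); and the relative compactness of the time-sections --- the genuine compact-containment input --- is, in the setting of this paper, provided by verifying (1) in a strictly finer Sobolev norm $H^{-\beta}(\T^d)$ with $d/2<\beta<\alpha$, together with the compact embedding $H^{-\beta}(\T^d)\hookrightarrow H^{-\alpha}(\T^d)$. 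Intersecting a sup-ball, a compact tube of time-sections and countably many $w'$-level sets then yields, for each $\eta>0$, a compact $K_\eta\subset\cD([0,T];\Xi)$ with $\P(Y^\eps\in K_\eta)\ge1-\eta$ for all $\eps$, i.e.\ tightness. Continuity of the limit points is forced by (3): if $Y^{\eps_k}\Rightarrow Y$, realize this convergence $\P$-a.s.\ in $\cD([0,T];\Xi)$ on one probability space by Skorokhod representation; the jump functional $J_T(x)\eqdef\sup_{t\le T}\norm{x(t)-x(t^-)}$ is lower semicontinuous on $\cD([0,T];\Xi)$ (a path uniformly close to one with a jump of given size has a jump almost that size, via the defining time changes), so $J_T(Y)\le\liminf_k J_T(Y^{\eps_k})$; since $J_T(Y^{\eps_k})\to0$ in probability by (3), $J_T(Y)=0$ $\P$-a.s., i.e.\ $Y$ is continuous.

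For the ``only if'' direction, assume tightness in $\cD([0,T];\Xi)$ with continuous limits. Condition (1) is immediate: by Prokhorov there is, for each $\eta$, a compact $K_\eta$ carrying mass $\ge1-\eta$ under every $\P\circ(Y^\eps)^{-1}$, and $\zeta_\eta\eqdef\sup_{x\in K_\eta}\sup_{t\le T}\norm{x(t)}<\infty$ by (a), whence $\limsup_{\eps\to0}\P(\sup_{t\le T}\norm{Y_t^\eps}>\zeta)\le\eta$ for $\zeta>\zeta_\eta$. Conditions (2)--(3) are where continuity of the limit is essential, since tightness alone controls only $w'$, not the ordinary modulus $w$. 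If (2) failed there would be $\gamma>0$, $\delta_n\downarrow0$, $\eps_n\to0$ with $\P(w(Y^{\eps_n},\delta_n,T)>\gamma)\ge c>0$; extracting $Y^{\eps_n}\Rightarrow Y$ with $Y$ continuous and applying Skorokhod representation, c\`adl\`ag convergence to the continuous $Y$ is uniform, so $w(Y^{\eps_n},\delta,T)\le w(Y,\delta,T)+2\sup_{t\le T}\norm{Y^{\eps_n}_t-Y_t}$; letting $n\to\infty$ at fixed $\delta$ and then $\delta\to0$, uniform continuity of $Y$ on $[0,T]$ contradicts the lower bound. The same coupling gives $J_T(Y^{\eps_n})\le2\sup_{t\le T}\norm{Y^{\eps_n}_t-Y_t}\to0$, hence (3).

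I expect the main obstacle to be the compact-containment gap: conditions (1)--(3) as written deliver the Skorokhod modulus and uniform boundedness but not relative compactness of the fixed-time sections in the infinite-dimensional space $\Xi$, so the argument must record explicitly why this is available in the intended application (verification of (1) in a finer Sobolev norm plus a compact Sobolev embedding). The second, more technical, point is the careful passage --- via time changes and the Skorokhod representation theorem --- between the ordinary modulus of continuity appearing in (2)--(3) and the Skorokhod modulus $w'$ that governs compactness in $\cD$, together with the (standard but fiddly) lower semicontinuity of the jump functional $J_T$. The remaining bookkeeping (reduction to $[0,T]$, the countable-$T$ exhaustion, measurability of the suprema involved) I would dispatch in a few lines with references to \cite{billingsley_convergence_1999}.
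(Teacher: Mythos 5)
Your proposal takes the same route as the paper's own one-line citation to Billingsley's Theorems~13.2 and~13.4 and the modulus inequalities (12.7)--(12.9): you localize to $[0,T]$ and run the $w$-versus-$w'$ modulus argument in both directions, which is precisely what those references do. The substantive contribution of your draft is the observation that the compact-containment ingredient is genuinely \emph{missing} from conditions \eqref{eq:tightness-cond-1}--\eqref{eq:tightness-cond-3} once $\Xi$ is infinite-dimensional; this is in fact a gap in the proposition \emph{as stated}, not just in your argument. A clean counterexample to the ``if'' direction: with $\Xi=\ell^2$, the constant paths $Y^\eps_t\equiv e_{\lfloor 1/\eps\rfloor}$ satisfy all three conditions trivially, yet their laws have no weakly convergent subsequence in $\cD([0,\infty);\ell^2)$. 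The citation to Billingsley is safe only because those theorems are stated for real-valued processes, where \eqref{eq:tightness-cond-1} already delivers relative compactness of the one-time marginals. In the intended application the gap is filled exactly as you propose: the bounds used to verify \eqref{eq:tightness-cond-1} in the proof of Proposition~\ref{pr:tightness-field} remain finite after shrinking $\alpha$ slightly (the series $\sum_{m}(1+|m|^2)^{-(\alpha-3)}$ converges for every exponent $>3+d/2$), and $H^{-\beta}(\T^d)\hookrightarrow H^{-\alpha}(\T^d)$ is compact for $\beta<\alpha$; the Mitoma-type reduction sketched in Remark~\ref{remark:tightness} is the alternative escape. The remainder of your argument --- $w'(x,\delta)\le w(x,2\delta)$ to carry \eqref{eq:tightness-cond-2} to the Skorokhod modulus, the reverse estimate for the ``only if'' direction, the Skorokhod coupling turning \eqref{eq:tightness-cond-3} into almost sure continuity of limits, and the lower semicontinuity of the jump functional $J_T$ --- is sound and matches what (12.7)--(12.9) and Theorem~13.4 provide in the scalar case. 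A careful statement of Proposition~\ref{pr:tightness-general} should therefore either append a compact-containment hypothesis to \eqref{eq:tightness-cond-1}--\eqref{eq:tightness-cond-3} or restrict $\Xi$ to the Hilbert-scale setting where the stronger bound is available.
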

\begin{proof}
	Just combine Theorems 13.2 and 13.4 in \cite{billingsley_convergence_1999} with the inequalities in (12.7)--(12.9) therein.
\end{proof}

\begin{remark}\label{remark:tightness}
	Thanks to the definition of the norm $\norm{\emparg}_{H^{-\alpha}(\T^d)}$ in \eqref{eq:H-norm},  Proposition \ref{pr:tightness-general}  simplifies if $(\Xi,\norm{\emparg})=(H^{-\alpha}(\T^d),\norm{\emparg}_{H^{-\alpha}(\T^d)})$, for some $\alpha >0$. Indeed, while the first condition in Proposition \ref{pr:tightness-general}, namely \eqref{eq:tightness-cond-1}, remains unchanged, the second condition  \eqref{eq:tightness-cond-2} may be replaced by
	\begin{equation}
		\lim_{\delta\to 0}\limsup_{\eps\to 0}\P\bigg(\sup_{\substack{s,t\in [0,T]\\
				\abs{t-s}<\delta}}\abs{Y_t^\eps(\phi_m)-Y_s^\eps(\phi_m)}>\gamma\bigg)=0\comma\qquad 	 m\in \Z^d \fstop
	\end{equation}
	An analogous simplification holds for the third condition \eqref{eq:tightness-cond-3}.
\end{remark}

We now have all we need to prove the desired claim for the sequence $((\cY_t^\eps)_{t\ge 0})_\eps$. 

\begin{proposition}\label{pr:tightness-field}
	The sequence $((\cY_t^\eps)_{t\ge 0})_\eps$ is tight in $\cD([0,\infty);H^{-\alpha}(\T^d))$, for all $\alpha > 3+d/2$, and all limit points are continuous.
\end{proposition}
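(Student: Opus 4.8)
The plan is to verify the three conditions of Proposition \ref{pr:tightness-general}, in the simplified form allowed by Remark \ref{remark:tightness}, for the sequence $((\cY_t^\eps)_{t\ge 0})_\eps$ in $H^{-\alpha}(\T^d)$ with $\alpha>3+d/2$. The decomposition \eqref{eq:decomposition-2} into a drift term and the martingale term is the backbone: I will treat $\cY_t^\eps(f)$ by bounding the drift part $\int_0^t \cY_s^\eps(\tfrac12\Delta_\eps f)\,\dd s$ and the martingale part $\cM_t^\eps(f)$ separately, using $\phi_m$ as test functions and summing the resulting bounds against the weights $(1+|m|^2)^{-\alpha}$.

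First I would establish the uniform bound \eqref{eq:tightness-cond-1}. By Doob's maximal inequality applied to the martingale $\cM_t^\eps(\phi_m)$ together with \eqref{eq:ub-pqv}, and by a crude bound on the drift term obtained from \eqref{eq:second-moment-field} (namely $\sup_{s\le T}\E[\cY_s^\eps(\tfrac12\Delta_\eps\phi_m)^2]\le \tfrac14\norm{\Delta_\eps\phi_m}_{\mathrm{Lip}(\T_\eps^d)}^2\norm{u_0^\eps}_{L^\infty(\T_\eps^d)}^2$, so the time integral is controlled by $T$ times this), one gets $\E[\sup_{t\le T}\cY_t^\eps(\phi_m)^2]\le C(T)\,\norm{\phi_m}_{\mathrm{Lip}}^2(1+\norm{\Delta_\eps\phi_m}_{\mathrm{Lip}}^2)\norm{u_0^\eps}_{L^\infty}^2\le C(T,u_0)(1+|m|^2)^{3}$, using $\norm{\nabla^{\eps,i}\phi_m}_{L^\infty}\lesssim |m|$ and $\norm{\Delta_\eps\phi_m}_{L^\infty}\lesssim |m|^2$. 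Summing against $(1+|m|^2)^{-\alpha}$ converges precisely because $\alpha>3+d/2$ (the exponent $3$ from the above, plus $d/2$ for summability of $\sum_m(1+|m|^2)^{-d/2-\delta}$), and Markov's inequality then gives \eqref{eq:tightness-cond-1}. For the modulus-of-continuity condition \eqref{eq:tightness-cond-2} (in the $\phi_m$-tested form of Remark \ref{remark:tightness}), I would again split: the drift increment over $[s,t]$ with $|t-s|<\delta$ is pointwise bounded by $\delta\sup_{r\le T}|\cY_r^\eps(\tfrac12\Delta_\eps\phi_m)|$, which is $O(\delta)$ in probability by the previous step; the martingale increment is handled by Doob applied to $(\cM_{s+\cdot}^\eps(\phi_m)-\cM_s^\eps(\phi_m))$ combined with the fact that its predictable quadratic variation increment is $\varGamma_t^\eps(\tfrac12\nabla^\eps\phi_m)-\varGamma_s^\eps(\tfrac12\nabla^\eps\phi_m)$, which by \eqref{eq:predictable-q-v} and Aldous--Lanoue \eqref{eq:AL-identity} has expectation at most $\tfrac12\norm{\phi_m}_{\mathrm{Lip}}^2$ times $\E[\norm{u_s^\eps}_{L^2}^2-\norm{u_t^\eps}_{L^2}^2]$, a quantity that is small uniformly in $\eps$ when $|t-s|$ is small after a chaining/partition argument over $[0,T]$ (here one uses that $t\mapsto\E[\norm{u_t^\eps}_{L^2}^2]$ is monotone, so its oscillation is controlled). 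Finally \eqref{eq:tightness-cond-3} follows directly from Lemma \ref{lemma:jump-sizes} and \eqref{eq:jumps}: $\E[\sup_t|\cY_t^\eps(\phi_m)-\cY_{t^-}^\eps(\phi_m)|]\le\eps^{d/2}\norm{\phi_m}_{\mathrm{Lip}}\norm{u_0^\eps}_{L^\infty}\to 0$, and after Markov plus summing against $(1+|m|^2)^{-\alpha}$ the jumps in $H^{-\alpha}$ vanish.

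The step I expect to be the main obstacle is \eqref{eq:tightness-cond-2} for the martingale part, specifically making the modulus-of-continuity control uniform in $\eps$ while summing over $m\in\Z^d$: one needs a bound on $\E[\sup_{|t-s|<\delta}|\cM_t^\eps(\phi_m)-\cM_s^\eps(\phi_m)|^2]$ that is simultaneously $o(1)$ as $\delta\to0$ and grows at most polynomially in $|m|$ (degree strictly less than $2\alpha-d$), so that Kolmogorov-type/Aldous tightness criteria can be applied term-by-term and then assembled. The cleanest route is to invoke the already-proven martingale convergence in $\cD([0,\infty);\R)$ for each fixed $\phi_m$ (Proposition \ref{pr:mart-conv}), which gives tightness of each scalar sequence $(\cM_\cdot^\eps(\phi_m))_\eps$ for free, and then upgrade to the $H^{-\alpha}$-valued statement by the uniform second-moment control of Lemma \ref{lemma:second-moment-fields} (which provides the needed decay in $m$ to truncate the Fourier series uniformly in $\eps$). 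Concretely: tightness in $H^{-\alpha}$ is implied by (a) tightness of each $\cY_\cdot^\eps(\phi_m)$ in $\cD([0,\infty);\R)$ — known from Propositions \ref{pr:mart-conv} and the drift estimate — together with (b) the uniform tail bound $\sup_\eps\E[\sup_{t\le T}\sum_{|m|>R}(1+|m|^2)^{-\alpha}\cY_t^\eps(\phi_m)^2]\to0$ as $R\to\infty$, which is exactly what the second-moment estimate of Lemma \ref{lemma:second-moment-fields} delivers once one checks $\sum_{|m|>R}(1+|m|^2)^{3-\alpha}\to0$. I would assemble the proof in this order: state the reduction to the $\phi_m$-tested conditions via Remark \ref{remark:tightness}; prove \eqref{eq:tightness-cond-1} and the uniform tail bound from Lemma \ref{lemma:second-moment-fields} and Doob; deduce \eqref{eq:tightness-cond-2} from scalar tightness plus the tail bound; and close with \eqref{eq:tightness-cond-3} via Lemma \ref{lemma:jump-sizes}.
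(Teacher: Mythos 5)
Your proposal is correct and takes essentially the same route as the paper's proof: decompose into drift plus martingale via \eqref{eq:decomposition-2}, verify the conditions of Proposition \ref{pr:tightness-general} using Remark \ref{remark:tightness} to reduce conditions \eqref{eq:tightness-cond-2} and \eqref{eq:tightness-cond-3} to the $\phi_m$-tested form, control the drift via Lemma \ref{lemma:second-moment-fields} and Cauchy--Schwarz in time, and upgrade the scalar tightness of $(\cM_\cdot^\eps(\phi_m))_\eps$ furnished by Proposition \ref{pr:mart-conv} (together with Lemma \ref{lemma:jump-sizes} and \eqref{eq:jumps}) to the $H^{-\alpha}$-valued statement through the uniform second-moment tail control. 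The ``cleanest route'' you settle on in your final paragraph is precisely the paper's argument, and your earlier concerns about a chaining/Kolmogorov-type bound for the martingale modulus of continuity become moot once Proposition \ref{pr:mart-conv} is invoked.
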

\begin{proof}
	By \eqref{eq:decomposition-2}, it suffices to verify the conditions in Proposition \ref{pr:tightness-general} for the drift and martingale terms separately. Recall \eqref{eq:infty-norm}, and fix $T>0$, $\gamma>0$ all throughout the proof.

	We start with the drift term, for which we adopt the following shorthand notation:  
	\begin{equation}
		\cA_t^\eps(f)\eqdef \int_0^t \cY_s^\eps(\tfrac12 \Delta_\eps f)\, \dd s\comma\qquad t \ge 0\comma f\in \cC^\infty(\T^d)\fstop
	\end{equation}
	As for the first condition, namely \eqref{eq:tightness-cond-1}, Cauchy-Schwarz inequality yields
	\begin{align}
		\limsup_{\eps\to 0}	\E\big[\sup_{t\in [0,T]}\norm{\cA_t^\eps}_{H^{-\alpha}(\T^d)}^2\big]&	\le T \int_0^T \sum_{m\in \Z^d}(1+|m|^2)^{-\alpha}\limsup_{\eps\to 0} \E\big[\abs{\cY_s^\eps(\tfrac12 \Delta_\eps\phi_m)}^2\big]\dd s	\\
		&\le (2\pi)^4\,T^2\norm{u_0}_\infty^2\sum_{m\in \Z^d}(1+|m|^2)^{-(\alpha-3)}\comma
	\end{align}
	where the second inequality used \eqref{eq:second-moment-field} in Lemma \ref{lemma:second-moment-fields} and $\limsup_{\eps\to0}\norm{\Delta \phi_m}_{{\rm Lip}(\T_\eps^d)}^2\le (2\pi)^4(1+|m|^2)^3$. Since $\alpha>3+d/2$, the right-hand side above is finite. Therefore, Markov inequality yields \eqref{eq:tightness-cond-1} for the drift.
	For the second condition \eqref{eq:tightness-cond-2}, a similar argument yields, for all $m\in \Z^d$, 
	\begin{equation}
		\limsup_{\eps\to 0}\E\big[\sup_{\substack{s,t\in [0,T]\\
				\abs{t-s}<\delta}}\abs{\cA_t^\eps(\phi_m)-\cA_s^\eps(\phi_m)}\big]\le \delta\, T\,(2\pi)^4(1+|m|^2)^3\norm{u_0}_\infty^2\comma
	\end{equation}
	which vanishes as $\delta \to 0$. By Markov inequality and Remark \ref{remark:tightness}, this suffices to prove \eqref{eq:tightness-cond-2}. The third condition \eqref{eq:tightness-cond-3} is trivially satisfied since the drift is continuous.
	
	Turning to the martingale term $(\cM_t^\eps)_{t\ge 0}$, we have
	\begin{align}
		\E\big[\sup_{t\in [0,T]}\norm{\cM_t^\eps}_{H^{-\alpha}(\T^d)}^2\big]&\le \sum_{m\in \Z^d}(1+|m|^2)^{-\alpha}\, \E\big[\sup_{t\in [0,T]}\abs{\cM_t^\eps(\phi_m)}^2\big]\\
		&\le \sum_{m\in \Z^d}(1+|m|^2)^{-\alpha}\, \E\big[|\cM_T^\eps(\phi_m)|^2\big]\\
		&\le (2\pi)^2\norm{u_0}_\infty^2 \sum_{m\in \Z^d} (1+|m|^2)^{-(\alpha-1)}\comma
	\end{align}
	where  the second step used Doob inequality, while  the third one used \eqref{eq:ub-pqv} in Lemma \ref{lemma:second-moment-fields}.  The right-hand side above is finite because $\alpha >1+d/2$. This proves the first condition \eqref{eq:tightness-cond-1}. Now observe that Proposition \ref{pr:mart-conv}, \eqref{eq:jumps}, and Lemma \ref{lemma:jump-sizes}  ensure that, for all $m\in \Z^d$, the sequence $((\cM_t^\eps(\phi_m))_{t\ge 0})_\eps$ satisfies all three conditions in Proposition \ref{pr:tightness-general} with $(\Xi,\norm{\emparg})=(\C,\abs{\emparg})$. Remark \ref{remark:tightness} concludes  the proof.
\end{proof}

\subsection{Final step}

We now have all we need to prove Theorem \ref{th:flu}. Indeed, the decomposition of $(\cY_t^\eps)_{t\ge 0}$ in \eqref{eq:decomposition-2}, and  Propositions \ref{pr:mart-conv} and \ref{pr:tightness-field} ensure that any limit process, say $(\cY_t)_{t\ge 0}$, have paths in $\cC([0,\infty);H^{-\alpha}(\T^d))$, for all $\alpha >3+d/2$, and solve the  martingale problem \eqref{eq:mart-problem} in $\cC([0,\infty);H^{-\alpha}(\T^d))$ with $\cY_0=0$. Moreover, the martingale $(\cM_t)_{t\ge 0}$ in \eqref{eq:mart-problem} satisfies
\begin{equation}
	(\cM_t(f))_{t\ge 0} = (M_t^f)_{t\ge 0}\comma\qquad f \in \cC^\infty(\T^d)\comma
\end{equation}
where $(M_t^f)_{t\ge 0}$ is defined in Proposition \ref{pr:mart-conv}, and the above identity is meant in distribution.

The proof of Theorem \ref{th:flu} ends as soon as we show that such a limit martingale problem has a unique solution. For this purpose, 	we introduce 
$	\cS(\T^d)\eqdef \bigcap_{\alpha\in \R}H^\alpha(\T^d)$ and $\cS(\T^d)'\eqdef \bigcup_{\alpha\in \R}H^\alpha(\T^d)$.
Since the embedding $H^\alpha(\T^d)\hookrightarrow H^\beta(\T^d)$ is Hilbert-Schmidt for all $\alpha >\beta +d/2$, $(\cS(\T^d),L^2(\T^d),\cS(\T^d)')$ defines a countably Hilbert nuclear triple.  Hence, it suffices to establish uniqueness of solutions in the larger space $\cC([0,\infty);\cS(\T^d)')$. By the Gaussian nature of the problem, the latter  is precisely covered by Holley-Stroock theory \cite{holley_generalized_1978} (see also \cite[\S11.4]{kipnis_scaling_1999} or \cite[Section C.5]{jara2018nonequilibrium}).

\begin{appendix}
\section*{Proofs of Lemmas \ref{lemma:Q1}, \ref{lemma:Q2} and \ref{lemma:Q3}}
\label{sec:app}

In this appendix, we prove all results contained in Lemmas \ref{lemma:Q1}, \ref{lemma:Q2} and \ref{lemma:Q3} concerned with the quantity first introduced in \eqref{eq:B}, which we recall here for the reader's convenience: for all $t\ge 0$, $x,y \in \T_\eps^d$ and $i,j=1,\ldots, d$,
\begin{equation}\label{eq:q-eps}
	q_t^{\eps,i,j}(x,y)
	\eqdef \frac{\eps^{-2}}4\big(p_t^\eps(x-\eps e_j,y)+p_t^\eps(x+\eps e_i,y)-p_t^\eps(x+\eps e_i-\eps e_j,y)-p_t^\eps(x,y)\big)^2\fstop
\end{equation}
All throughout this section, we will exploit the invariance and product structure of the random walk $(X_t^\eps)_{t\ge 0}$ on $\T_\eps^d$: for all $t\ge 0$ and $x, y \in \T_\eps^d$ with $x=(x^i)_{i=1,\ldots, d}$, 
\begin{equation}\label{eq:pi-eps}
	p_t^\eps(x,y)=p_t^\eps(y,x)=p_t^\eps(0,x-y)\comma\qquad p_t^\eps(0,x)= \prod_{i=1}^d \pi_t^\eps(x^i)\comma
\end{equation}
where $\pi_t^\eps$ denotes the distribution of the continuous-time random walk on $\T_\eps$ (i.e., one-dimensional), started from the origin and with  nearest-neighbor jump rates equal to $\frac12\eps^{-2}$.

As already mentioned at the beginning of Section \ref{sec:mean-proofs}, Lemma \ref{lemma:Q2} essentially generalizes Lemma \ref{lemma:Q1}. For this reason, we find more convenient to prove these two lemmas together.
\begin{proof}[Proofs of Lemmas \ref{lemma:Q1} and \ref{lemma:Q2}] We start by expressing $q^\eps$ in \eqref{eq:q-eps} in terms of $\pi^\eps$, introduced in \eqref{eq:pi-eps}: for all $t\ge 0$, $x=(x^i)_{i=1,\ldots, d}\in \T_\eps^d$, and $i,j=1,\ldots, d$ with $i\neq j$,
	\begin{align}
		q_t^{\eps,i,j}(0,x)&=\Bigg(\prod_{\substack{\ell=1\\
				\ell \neq i,j}}^d  \pi_t^\eps(x^\ell)\Bigg)^2 
		\frac{\eps^{-2}}{4} 
		\tonde{\pi_t^\eps(x^i+\eps)-\pi_t^\eps(x^i)}^2\tonde{\pi_t^\eps(x^j-\eps)-\pi_t^\eps(x^j)}^2\\
		&= \Bigg(\prod_{\substack{\ell=1\\
				\ell \neq i,j}}^d \pi_t^\eps(x^\ell)\Bigg)^2 4\eps^2
		\tonde{\tfrac12 \nabla^\eps \pi_t^\eps(x^i)}^2\tonde{\tfrac12\nabla_*^\eps \pi_t^\eps(x^j)}^2
		\comma
	\end{align} 
	while, for $i=j=1,\ldots, d$, 
	\begin{align}
		q_t^{\eps,i,i}(0,x)&= \Bigg(\prod_{\substack{\ell=1\\
				\ell \neq i}}^d \pi_t^\eps(x^\ell)\Bigg)^2	\frac{\eps^{-2}}{4}\tonde{\pi_t^\eps(x^i+\eps)+\pi_t^\eps(x^i-\eps)-2\pi_t^\eps(x^i)}^2\\
		&= 	\Bigg(\prod_{\substack{\ell=1\\
				\ell \neq i}}^d \pi_t^\eps(x^\ell)\Bigg)^2 \eps^2 \tonde{\tfrac12 \Delta_\eps \pi_t^\eps(x^i)}^2\comma
	\end{align} 
	where all $\eps$-gradients and corresponding laplacians here are one-dimensional. Define, for all $t\ge0$,
	\begin{align}\label{eq:R-eps}
		\begin{aligned}
			\cR_\eps(t)&\eqdef \eps^{-1}\norm{\pi_t^\eps}^2_{L^2(\T_\eps)}=\pi_{2t}^\eps(0) \comma\\
			\cS_\eps(t)&\eqdef
			2 \norm{\tfrac12\nabla^\eps \pi_t^\eps}_{L^2(\T_\eps)}^2 =2\norm{\tfrac12\nabla^\eps_* \pi_t^\eps}_{L^2(\T_\eps)}^2\comma\\
			\cT_\eps(t) &\eqdef \eps\norm{\tfrac12 \Delta_\eps \pi_t^\eps}_{L^2(\T_\eps)}^2\fstop
		\end{aligned}
	\end{align}
	Since $\Delta_\eps=\nabla^\eps\nabla_*^\eps=\nabla_*^\eps\nabla^\eps$ and $\frac{\dd}{\dd t}\pi_t^\eps= \frac12\Delta_\eps \pi_t^\eps$, we have \begin{equation}\label{eq:derivatives}
		\cS_\eps(t)=-\frac{\eps}2\,\cR_\eps'(t)\comma\qquad \cT_\eps(t)=-\frac{\eps}2\,\cS_\eps'(t)\fstop\end{equation}
	Now, summing over $x\in \T_\eps^d$, we obtain
	\begin{equation}\label{eq:Q-i-j-R-S-T}
		\cQ_\eps^{i,j}(t)\eqdef \sum_{x\in \T_\eps^d}q_t^{\eps,i,j}(0,x) = \begin{dcases}
			\cR_\eps(t)^{d-2}\,	\cS_\eps(t)^2 &\text{if}\ i\neq j\ (d\ge 2)\\
			\cR_\eps(t)^{d-1}\,\cT_\eps(t) &\text{if}\ i=j\ (d\ge 1)\fstop
		\end{dcases}
	\end{equation}
	The form of the right-hand side above proves the first claim in Lemma \ref{lemma:Q2}. 
	
	Further summing over $j=1,\ldots, d$, we get
	\begin{align}\label{eq:Q-R-S}
		\begin{aligned}
			\cQ_\eps(t)\eqdef	\sum_{j=1}^d \cQ_\eps^{i,j}(t)&= {\cR_\eps(t)}^{d-1}\,\cT_\eps(t)+\tonde{d-1}{\cR_\eps(t)}^{d-2}\,\cS_\eps(t)^2\\
			&=-\frac{\eps}2\big(\cR_\eps(t)^{d-1}\cS_\eps(t)\big)'\comma
		\end{aligned}
	\end{align}
	where the second step follows from \eqref{eq:derivatives}. Hence, since $\cR_\eps(0)=1$,  $\cS_\eps(0)=\eps^{-1}$, and $\cS_\eps(t)\to 0$ as $t\to \infty$, we get
	\begin{equation}\label{eq:Q-12}
		\int_0^\infty \cQ_\eps(t)\, \dd t=-\frac{\eps}2\int_0^\infty \big(\cR_\eps(t)^{d-1}\cS_\eps(t)\big)'\, \dd t = \frac{\eps}2\big(\cR_\eps(0)^{d-1}\cS_\eps(0)\big)=\frac12\fstop
	\end{equation}
	This is precisely the main claim in Lemma \ref{lemma:Q1} (see also its reformulation in Remark \ref{remark:Q-Q-i-j}). This settles the analysis when $d=1$, since $\cQ_\eps^{i,j}=\car_{i=j=1}\,\cQ_\eps$ only in one dimension.
	
	In order to prove the final claim in Lemma \ref{lemma:Q2} on 
	\begin{equation}\label{eq:b-c-appendix}
		\mathfrak b_\eps =\int_0^\infty \cR_\eps(t)^{d-1}\, \cT_\eps(t)\, \dd t\comma\qquad \mathfrak c_\eps =\int_0^\infty \cR_\eps(t)^{d-2}\, \cS_\eps(t)^2\, \dd t\comma
	\end{equation}
	and the existence of their limits (see \eqref{eq:b-c-eps}--\eqref{eq:b-c-limits}), fix $d\ge 2$ and first observe that, since $\cR_\eps(t)$, $\cS_\eps(t)$ and $\cQ_\eps(t)$ are all strictly positive for all $t\ge  0$, \eqref{eq:Q-i-j-R-S-T} and \eqref{eq:Q-12} ensure that $\mathfrak b_\eps, \mathfrak c_\eps \in (0,\frac12)$. Furthermore, by the first identity in \eqref{eq:b-c-12}, it suffices to show that either $\mathfrak b_\eps$ or $\mathfrak c_\eps$ converges to a value in $(0,\frac12)$. Let us focus on $\mathfrak c_\eps$.
	
	Recall \eqref{eq:R-eps}. 	 From Laplace inversion formulas (see, e.g., \cite[Eq.\ (2.3)]{cox_coalescing_1989}), we know
	\begin{equation}
		\cR_\eps(t)
		= \eps\sum_{j=0}^{ \eps^{-1}-1} \exp\set{-2\,\psi(\eps j)\, \eps^{-2}\,t}\comma \quad \text{with}\ z\in \T\mapsto \psi(z)\eqdef 1-\cos(2\pi z)\fstop
	\end{equation}
	This and the first identity in \eqref{eq:derivatives} yield
	\begin{align}
		\mathfrak c_\eps
		&= \int_0^\infty \eps^{d-2}\sum_{j_1,\ldots, j_d=0}^{\eps^{-1}-1}\psi(\eps j_1)\, \psi(\eps j_2) \exp\set{-2\tonde{\psi(\eps j_1)+\ldots+\psi(\eps j_d)}\eps^{-2}\,t}\dd t\\
		&= \frac{\eps^d}{2}\sum_{j_1,\ldots, j_d\purple{=0}}^{\eps^{-1}-1}\frac{\psi(\eps j_1)\, \psi(\eps j_2)}{\psi(\eps j_1)+\ldots + \psi(\eps j_d)}\comma
	\end{align}
	which converges, as $\eps \to 0$, to
	\begin{equation}\label{eq:c-explicit}
		\mathfrak c\eqdef \frac12 \int_{\T^d} \frac{\psi(x^1)\,\psi(x^2)}{\psi(x^1)+\ldots + \psi(x^d)}\, \dd x\comma\quad \text{with}\ x=(x^i)_{i=1,\ldots, d}\in \T^d\fstop
	\end{equation}
	Since the above integral is strictly positive, this completes the proof of Lemma \ref{lemma:Q2}.\end{proof}

\begin{remark}[Properties of $\mathfrak b$ and $\mathfrak c$]\label{remark:coefficients-b-c}
	The above proof also reveals  the behaviors of the coefficients  $\mathfrak b$ and $\mathfrak c$ as functions of the dimension $d\ge 1$. 
	\begin{enumerate}
		\item \emph{Monotonicity}. Since $\cR_\eps(t)\in (0,1)$  and $\cS_\eps(t),\cT_\eps(t)>0$ for all $t>0$, \eqref{eq:Q-i-j-R-S-T} and \eqref{eq:b-c-appendix} state that both $\mathfrak b_\eps$ and $\mathfrak c_\eps$ --- and, thus, their limits $\mathfrak b$ and $\mathfrak c$, too --- are strictly decreasing with $d\ge 1$. 
		\item \emph{Bounds}. By \eqref{eq:b-c-12} and $\mathfrak b\ge 0$, we have the upper bound $\mathfrak c\le \frac1{2\tonde{d-1}}$. Since $\psi(z)\in [0,2]$ and $\int_\T \psi(z)\, \dd z=1$, the integral in \eqref{eq:c-explicit} yields the lower bound $\mathfrak c\ge \frac1{4d}$.
		\item \emph{2D}. When $d=2$, the value of the integral in \eqref{eq:c-explicit} is explicit:
		\begin{equation}\label{eq:coefficient-c-d=2}
			\mathfrak c=\frac12 \int_\T\int_\T\frac{\psi(x^1)\,\psi(x^2)}{\psi(x^1)+\psi(x^2)}\, \dd x^1\dd x^2 = \frac12-\frac1\pi\comma\qquad \mathfrak b=\frac1\pi\comma
		\end{equation}
		where for the second identity we used \eqref{eq:b-c-12}.
	\end{enumerate} 
\end{remark}

\begin{remark}[Formula for $\mathfrak a$]\label{remark:formula-a}
	By combining \eqref{eq:a} and \eqref{eq:c-explicit}, we get, for all $d\ge2$, 
	\begin{equation}
		\mathfrak a=\mathfrak a(d)= \frac{1-(d-2)\int_{\T^d}\frac{\psi(x^1)\,\psi(x^2)}{\psi(x^1)+\ldots +\psi(x^d)}\,\dd x}{1+d\int_{\T^d}\frac{\psi(x^1)\,\psi(x^2)}{\psi(x^1)+\ldots +\psi(x^d)}\,\dd x}\comma\quad \text{with}\ x=(x^i)_{i=1,\ldots, d}\in \T^d\fstop
	\end{equation}
	
\end{remark}
We conclude the appendix with the short proof of Lemma \ref{lemma:Q3}.
\begin{proof}[Proof of Lemma \ref{lemma:Q3}]
	By \eqref{eq:Q-R-S} and arguing as for \eqref{eq:Q-12}, we have
	\begin{equation}
		\int_T^\infty \cQ_\eps(t)\, \dd t= \frac{\eps}2\,\big(\cR_\eps(T)^{d-1}\, \cS_\eps(T)\big)\le \frac{\eps}2\, \cS_\eps(T)\comma\qquad T>0\comma
	\end{equation}
	where for the inequality we used $\cR_\eps\le 1$. The desired estimate in \eqref{eq:lemma-Q3} is a consequence of the fact that $\cS_\eps(T)$ is, up to normalization, the Dirichlet form of the diffusively-rescaled continuous-time random walk on $\T_\eps^n$ with $n=1$, which is well-known to have the power law behavior  $T^{-(n+1/2)}=T^{-3/2}$ for $T\in (\eps^2,1)$. For a simple proof of this fact, see, e.g.,  \cite[Eq.\ (5.9)]{banerjee2020rates}.	
\end{proof}
\end{appendix}
\begin{acks}[Acknowledgments]
	The author thanks Assaf Shapira and Matteo Quattropani for fruitful discussions, and the two anonymous referees for their constructive comments.
\end{acks}

\begin{funding}
	The author is a member of GNAMPA, INdAM, and  the PRIN project TESEO (2022HSSYPN), and acknowledges financial support by \textquotedblleft Microgrants 2022\textquotedblright, funded by Regione FVG, legge LR 2/2011.
\end{funding}

\bibliographystyle{alpha}

\end{document}